\documentclass[10pt,authoryear]{article}%
\usepackage{amsmath}
\usepackage{amsfonts}
\usepackage{mathrsfs}
\usepackage{amssymb,bbm, color}
\usepackage[hidelinks,colorlinks=true,linkcolor=blue,citecolor=blue]{hyperref}
\usepackage{graphicx}
\numberwithin{equation}{section}
\usepackage[body={15.5cm,21cm}, top=3cm]{geometry}%
\providecommand{\U}[1]{\protect\rule{.1in}{.1in}}
\providecommand{\U}[1]{\protect \rule{.1in}{.1in}}
\newtheorem{theorem}{Theorem}[section]

\newtheorem{assumption}[theorem]{Assumption}

\newtheorem{definition}[theorem]{Definition}

\newtheorem{lemma}[theorem]{Lemma}

\newtheorem{proposition}[theorem]{Proposition}
\newtheorem{remark}[theorem]{Remark}

\newenvironment{proof}[1][Proof]{\noindent \textbf{#1.} }{\  \rule{0.5em}{0.5em}}

\def \P{\mathbb{P}}
\def \E{\mathbb{E}}
\def \hE{\hat{\mathbb{E}}}

\begin{document}
	\title{Doubly Reflected Backward SDEs Driven by $G$-Brownian Motion with Quadratic Generator}
	\author{ Hanwu Li\thanks{Research Center for Mathematics and Interdisciplinary Sciences, Shandong University, Qingdao 266237, Shandong, China. lihanwu@sdu.edu.cn.}
	\thanks{Frontiers Science Center for Nonlinear Expectations (Ministry of Education), Shandong University, Qingdao 266237, Shandong, China.}
    \thanks{Shandong Province Key Laboratory of Financial Risk, Shandong University, Qingdao 266237, Shandong, China.}
    , Peng Luo\thanks{School of Mathematical Sciences, Shanghai Jiao Tong University, 200240, Shanghai, China. peng.luo@sjtu.edu.cn}
    \hspace{0.01em} and Mengbo Zhu\thanks{Zhongtai Securities Institute for Financial Studies, Shandong University, Jinan 250100, Shandong, China. zhumengbo@mail.sdu.edu.cn} } 
	\date{}
	\maketitle
	\begin{abstract}
	In this paper, we study the doubly reflected  backward stochastic differential equations driven by $G$-Brownian motion ($G$-BSDEs for short) when the generator has quadratic growth in the $z$-component. Based on the theory of $G$-BMO martingale and $G$-Girsanov theorem, we establish the existence and uniqueness result when the upper obstacle is almost a generalized $G$-It\^{o}'s process. Moreover, the solution can be approximated monotonically by the solutions to a family of penalized reflected $G$-BSDEs with a lower obstacle, which plays an important role to establish the relation between doubly reflected $G$-BSDEs and fully nonlinear partial differential equations with double obstacles.
	\end{abstract}
	
	\textbf{Key words}: $G$-expectation, reflected backward SDE, $G$-BMO martingale, $G$-Girsanov theorem, partial differential equation
	
	\textbf{MSC-classification}: 60H10, 60H30
	
	\section{Introduction}

The theory of nonlinear backward stochastic differential equations (BSDEs for short) was introduced by Pardoux and Peng \cite{PP}. Later, El Karoui et al. \cite{EPQ} extended their results to reflected BSDEs with a lower obstacle of the following form
	\begin{equation}
		\begin{cases}			
			Y_t=\xi+\int_t^T f(s,Y_s,Z_s)ds-\int_t^T Z_sdB_s+(A_T-A_t), \\
			Y_t\geq L_t,\ t\in[0,T],\ \textrm{and} \ \int_0^T (Y_s-L_s)dA_s=0,
		\end{cases}
    \nonumber
	\end{equation}
where the generator $f$ is uniformly Lipschitz in $(y,z)$. The non-decreasing process $A$ is added to the original BSDEs to keep the solution $Y$ above the prescribed obstacle process $L$ in a manner that satisfies the Skorohod condition $\int_0^T (Y_s-L_s)dA_s=0$. 
Subsequently, Cvitani\'{c} and Karatzas \cite{CK} proposed BSDEs with two obstacles, where two non-decreasing processes $A^+,\,A^-$  are introduced to ensure that the value process $Y$ is restricted between the lower obstacle $L$ and the upper obstacle $U$ in a minimal way (i.e. $\int_0^T (Y_s-L_s)dA_s^+=0,\ \int_0^T (U_s-Y_s)dA_s^-=0$). The reflected BSDE is a powerful tool to study several problems related to mathematical ﬁnance, stochastic control and PDEs. We refer the readers to \cite{BCFE,CR,EPQ} for more details.

To deal with problems with model uncertainty, Peng \cite{P07a,P08a} systemically established the $G$-expectation theory. The theory defines a new type of Brownian motion termed $G$-Brownian motion under nonlinear expectation. Compared with the classical case, its increments obey the $G$-normal distribution and {its quadratic process} is no longer deterministic. The $G$-It\^{o}'s formula is also proposed for stochastic calculus under $G$-expectation framework. 

More recently, Hu et al. \cite{HJPS1} introduced the BSDEs driven
by $G$-Brownian motion ($G$-BSDEs for short) which take the following form
	\begin{equation}			
			Y_t=\xi+\int_t^T f(s,Y_s,Z_s)ds+\int_t^T g(s,Y_s,Z_s)d\langle B \rangle_s-\int_t^T Z_sdB_s-(K_T-K_t),
    \nonumber
	\end{equation}
where the generators $f,g$ are uniformly Lipschitz in $(y,z)$ {and $K$ is a non-increasing $G$-martingale}. {This new feature causes }difficulties in constructing contraction mapping when establishing the existence and uniqueness result. A new method, called Galerkin approximation, was introduced for the issue. Their subsequent work on the comparison theorem, the Girsanov transformation and
the nonlinear Feynman–Kac formula  have been done in \cite{HJPS2}. The reflected $G$-BSDEs with a lower obstacle and with an upper obstacle were studied by Li et al. \cite{LPSH} and Li and Peng \cite{LP}, respectively. It is worth pointing out that, different from the classical framework, the case of lower obstacles and of upper obstacles are significantly different under $G$-expectation due to the appearance of the non-increasing $G$-martingale. Moreover, the minimal fashion of the compensating term is characterized by the so called martingale condition instead of the Skorohod condition.  Subsequently, Li and Song \cite{LS} proposed $G$-BSDEs with double reflections, where the description of the solutions is motivated by the penalization method. Recently, Li and Ning \cite{LN} relaxed the conditions of obstacles in \cite{LS}. The $G$-BSDEs and reflected $G$-BSDEs have wide applications including utility theory (\cite{EJ2,LTT}), optimal stopping under nonlinear expectations (\cite{Li25}), pricing contingent claims when the financial market has volatility uncertainty (\cite{EJ1,LPSH,Vorbrink}) and probabilistic interpretation for solutions to fully nolinear PDEs (\cite{HJPS2,LN,LPSH}). 

Due to the importance in theoretical analysis and practical applications, the BSDEs and reflected BSDEs under both the classical and the $G$-expectation framework have attracted considerable attention. One branch is to relax the Lipschitz condition on generators, for instance, the case where the generators have quadratic growth in the $z$-component. Quadratic BSDEs and quadratic reflected BSDEs have been extensively and intensively studied since the work of Kobylanski \cite{K}. We refer the reader to \cite{BH1,BH2} for subsequent works of \cite{K} and \cite{KLQT,LX} for quadratic reflected BSDEs. In addition to their fruitful applications in exponential utility theory, including exponential utility maximization (\cite{HIM,M}) and stochastic equilibrium  (\cite{XZ}), they are also closely related to pricing contingent claims in incomplete markets (\cite{RE}), risk-sensitive control (\cite{HTdiagbsde,LZ,LZ24}), stochastic representations for PDEs (\cite{BY,BC}). 
 In the $G$-expectation framework, Hu et al. \cite{HLSH} proved existence and uniqueness of quadratic $G$-BSDEs with bounded terminal values. Uniqueness is obtained by techniques of the $G$-BMO martingale and $G$-Girsanov theory. For the existence, first, the solutions to discrete $G$-BSDEs are constructed with the help of fully nonlinear PDEs, and then the solutions to the general case {are obtained by successive approximation}. The method is valid when the generators and terminal values are sufficiently regular. Then, Hu et al. \cite{HTW} extended the quadratic $G$-BSDEs both to the case of convex generators and unbounded terminal values and to the multi-dimensional case.  Cao and Tang \cite{CT} firstly tackled the quadratic reflected $G$-BSDEs. They also established the comparison theorem as well as the nonlinear Feynman–Kac formula.

In this paper, we study the doubly reﬂected $G$-BSDEs of the following form:
	\begin{equation}
		\begin{cases}			
			Y_t=\xi+\int_t^T f(s,Y_s,Z_s)ds+\int_t^T g(s,Y_s,Z_s)d\langle B \rangle_s-\int_t^T Z_sdB_s 
             +(A_T-A_t),   \\     L_t\leq Y_t \leq U_t,\ t\in[0,T],
		\end{cases}
    \nonumber
	\end{equation}
where the generators $f,g$ have quadratic growth in the $z$-component. Our contributions are mainly twofold. First, we attain the well-posedness of doubly reflected $G$-BSDEs with quadratic growth and second, we establish the relation between quadratic doubly reflected $G$-BSDEs and fully nonlinear partial differential equations with two obstacles. 
Compared with the approximate Skorohod condition with a fixed order $\alpha$ proposed in \cite{LN,LS}, for the quadratic case, similar condition must hold for any order $\alpha\geq 2$.  By employing the properties of $G$-BMO martingales and $G$-Girsanov theory, we establish several a priori estimates that are crucial to the uniqueness of the solution. The existence is attained by the approximation via penalization. In order to derive the connection between the solution to the doubly reflected $G$-BSDE and the PDE with two obstacles, it is more suitable to consider the penalized reflected $G$-BSDEs with a lower obstacle (see Eq. \eqref{barY^n}) as the associated sequence of solutions is monotone. However, even for the Lipschitz case, this approach is also hardly feasible due to the fact that $\bar{A}^n$ in \eqref{barY^n} is no longer a $G$-martingale as claimed in \cite{LN}. Therefore, instead of the penalized reflected $G$-BSDEs, we consider the following family of $G$-BSDEs parameterized by $m,n$:
\begin{equation}
	\begin{split}
		Y^{n,m}_t=&\xi+\int_t^T f(s,Y^{n,m}_s,Z^{n,m}_s)ds+\int_t^T g(s,Y^{n,m}_s,Z^{n,m}_s)d\langle B\rangle_s-\int_t^T Z_s^{n,m}dB_s\\
        &-(K_T^{n,m}-K_t^{n,m}) +(A^{n,m,+}_T-A^{n,m,+}_t)-(A^{n,m,-}_T-A^{n,m,-}_t),
\end{split} \nonumber \end{equation}
where $A^{n,m,+}_t=\int_0^t m(Y_s^{n,m}-L_s)^-ds$ and $A^{n,m,-}_t=\int_0^t n(Y_s^{n,m}-U_s)^+ds$. We first provide the uniform estimate for the sequence $\{Y^{n,m}\}$ by applying the comparsion theorem for quadratic $G$-BSDEs. Recall that for the lower obstacle case, the uniform estimates in terms of $Z$ and $K$ can be dominated by those with respect to $Y$ (see Proposition 3.1 and Lemma 4.1 in \cite{CT}). Unfortunately, since the present penalized $G$-BSDEs involve two competing forces, for the doubly reflected case, uniform estimates for $\{Z^{n,m}\}$, $\{K^{n,m}\}$, $\{A^{n,m,\pm}\}$ cannot be derived solely from uniform estimates for $\{Y^{n,m}\}$. To overcome this shortcoming, we provide the explicit convergence rate of $(Y^{n,m}-U)^{+}$, which is attained by some delicate analysis  using the comparison theorem for quadratic $G$-BSDEs, the theory of $G$-BMO maringales and $G$-Girsanov transformation. With the help of estimates mentioned above, choosing $m=n$, we show that the solutions of penalized $G$-BSDEs converge when $n$ approaches infinity in some suitable spaces, and the triple of limiting processes is indeed  the solution to the doubly reflected $G$-BSDE. 

Although we have already obtained the existence of the solution to doubly reflected $G$-BSDE, the sequence $\{Y^{n,n}\}$ does not converge to $Y$ monotonically. Recall that the Dini theorem plays an important role in uncovering the relation between  reflected $G$-BSDEs (resp., reflected BSDEs) and obstacle problems for fully nonlinear (resp., semilinear) PDEs (see \cite{KKPPQ,HH,LN,LPSH}). Thus, it is crucial for us to find a monotone convergence sequence. Fortunately, sending $m$ to infinity in the above penalized $G$-BSDEs, we obtain a family of penalized reflected $G$-BSDEs with a lower obstacle parameterized by $n$, whose solutions are denoted by $\{(\bar{Y}^n,\bar{Z}^n,\bar{A}^n)\}$.  Since $\{\bar{Y}^n\}$ is non-decreasing with respect to $n$, it is exactly the monotonic sequence we need providing that $|Y^{n,n}-\bar{Y}^n|$ converges to $0$. With the help of this fact and the connection between reflected $G$-BSDEs with a lower obstacle and the PDEs with a single obstacle, we finally obtain the Feynman-Kac formula for the doubly reflected $G$-BSDEs under a Markovian framework. 


The remainder of the paper is organized as follows. In Section \ref{sec:Preliminaries}, we present some preliminary results on $G$-expectation, quadratic $G$-BSDEs and quadratc reflected $G$-BSDEs with a lower obstacle. Then,  we 
introduce the quadratic doubly reflected $G$-BSDE and establish its  well-posedness in Section \ref{sec:Doubly reflected $G$-BSDE with quadratic generator}.  In Section \ref{sec:Probabilistic representation for fully nonlinear PDEs with double obstacles}, we establish the relationship between doubly reflected $G$-BSDEs with quadratic growth and fully nonlinear parabolic PDEs.

\section{Preliminaries} \label{sec:Preliminaries}

We review some basic notions and results of $G$-expectation, $G$-BSDEs and reflected $G$-BSDEs. For simplicity, we only consider the one-dimensional case. The readers may refer to  \cite{CT,HLSH,HTW,LN,P07a,P08a,P19} for more details.

\subsection{$G$-expectation and $G$-It\^{o}'s calculus}

Let $\Omega_T=C_{0}([0,T];\mathbb{R})$, the space of
real-valued continuous functions starting from the origin, i.e., $\omega_0=0$ for any $\omega\in \Omega_T$, be endowed
with the supremum norm. Let $\mathcal{B}(\Omega_T)$ be the Borel set and $B$ be the canonical process. Set
\[
L_{ip} (\Omega_T):=\{ \varphi(B_{t_{1}},...,B_{t_{n}}):  \ n\in\mathbb {N}, \ t_{1}
,\cdots, t_{n}\in\lbrack0,T], \ \varphi\in C_{b,Lip}(\mathbb{R}^{ n})\},
\]
where $C_{b,Lip}(\mathbb{R}^{ n})$ denotes the set of all bounded Lipschitz functions on $\mathbb{R}^{n}$.
We fix a sublinear and monotone function $G:\mathbb{R}\rightarrow\mathbb{R}$ defined by
\begin{align}\label{GG}
G(a):=\frac{1}{2}(\overline{\sigma}^2a^+-\underline{\sigma}^2a^-), \quad \textrm{for} \quad a \in \mathbb{R},
\end{align}
where $0< \underline{\sigma}^2<\overline{\sigma}^2$. The related $G$-expectation on $(\Omega_T, L_{ip}(\Omega_T))$ can be constructed in the following way. Assume that $\xi\in L_{ip}(\Omega_T)$ can be represented as
    \begin{displaymath}
    	\xi=\varphi(B_{{t_1}}, B_{t_2},\cdots,B_{t_n}).
\end{displaymath}
Then, for $t\in[t_{k-1},t_k)$, $k=1,\cdots,n$, set
\begin{displaymath}
	\hat{\mathbb{E}}_{t}[\varphi(B_{{t_1}}, B_{t_2},\cdots,B_{t_n})]:=u_k(t, B_t;B_{t_1},\cdots,B_{t_{k-1}}),
\end{displaymath}
where $u_k(t,x;x_1,\cdots,x_{k-1})$ is a function of $(t,x)$ parameterized by $(x_1,\cdots,x_{k-1})$ such that it solves the following fully nonlinear PDE defined on $[t_{k-1},t_k)\times\mathbb{R}$:
\begin{displaymath}
	\partial_t u_k+G(\partial_x^2 u_k)=0
\end{displaymath}
with terminal conditions
\begin{displaymath}
	u_k(t_k,x;x_1,\cdots,x_{k-1})=u_{k+1}(t_k,x;x_1,\cdots,x_{k-1},x), \quad k<n
\end{displaymath}
and $u_n(t_n,x;x_1,\cdots,x_{n-1})=\varphi(x_1,\cdots,x_{n-1},x)$. Hence, the $G$-expectation of $\xi$ is $\hat{\mathbb{E}}_0[\xi]$ and for simplicity, we always omit the subscript $0$. The triple $(\Omega_T, L_{ip}(\Omega_T),\hat{\mathbb{E}})$ is called the $G$-expectation space and the process $B$ is the $G$-Brownian motion. 

Define $\Vert\xi\Vert_{L_{G}^{p}}:=(\hat{\mathbb{E}}[|\xi|^{p}])^{1/p}$ for $\xi\in L_{ip}(\Omega_T)$, $p\geq1$ and $\|\xi\|_{L^\infty_G}:=\inf\{M\geq 0:|\xi|\leq M, \textrm{q.s.}\}$.   The completion of $L_{ip} (\Omega_T)$ under the norm $\|\cdot\|_{L_G^p}$ (resp., $\|\cdot\|_{L_G^\infty}$) is denote by $L_{G}^{p}(\Omega_T)$ (resp., $L^\infty_G(\Omega_T)$). For all $t\in[0,T]$, $\hat{\mathbb{E}}_t[\cdot]$ is a continuous mapping on $L_{ip}(\Omega_T)$ w.r.t the norm $\|\cdot\|_{L_G^1}$. Hence, the conditional $G$-expectation $\mathbb{\hat{E}}_{t}[\cdot]$ can be
extended continuously to the completion $L_{G}^{1}(\Omega_T)$. Denis, Hu and Peng \cite{DHP11} prove that the $G$-expectation has the following representation.
\begin{theorem}[\cite{DHP11}]
	\label{the1.1}  There exists a weakly compact set
	$\mathcal{P}$ of probability
	measures on $(\Omega_T,\mathcal{B}(\Omega_T))$, such that
	\[
	\hat{\mathbb{E}}[\xi]=\sup_{\P\in\mathcal{P}}\E^{\P}[\xi] \text{ for all } \xi\in  {L}_{G}^{1}{(\Omega_T)}.
	\]
	$\mathcal{P}$ is called a set that represents $\hat{\mathbb{E}}$.
\end{theorem}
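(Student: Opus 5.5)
The plan is to realize $\mathcal{P}$ as the laws of Itô integrals with volatility uncertainty, verify the representation on $L_{ip}(\Omega_T)$ through stochastic control, and then pass to $L_G^1(\Omega_T)$ by density.

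\emph{Construction of $\mathcal{P}$.} On an auxiliary probability space $(\Omega',\mathcal{F}',P')$ carrying a classical Brownian motion $W$ with its augmented filtration, let $\Theta$ be the set of progressively measurable processes $\theta$ with values in $[\underline{\sigma},\overline{\sigma}]$. For each $\theta\in\Theta$, let $P_\theta$ be the law on $(\Omega_T,\mathcal{B}(\Omega_T))$ of $X^\theta_t=\int_0^t\theta_s\,dW_s$, and set $\mathcal{P}_0=\{P_\theta:\theta\in\Theta\}$. Take $\mathcal{P}$ to be the weak closure of $\mathcal{P}_0$.

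\emph{Tightness and relative compactness.} By the Burkholder--Davis--Gundy inequality,
\[
E^{P'}\bigl[|X^\theta_t-X^\theta_s|^4\bigr]\le C\,\overline{\sigma}^4|t-s|^2
\]
uniformly in $\theta$. Kolmogorov's criterion then shows that $\mathcal{P}_0$ is tight, so by Prokhorov's theorem $\mathcal{P}$ is weakly compact.

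\emph{Representation on $L_{ip}(\Omega_T)$.} Fix $\xi=\varphi(B_{t_1},\dots,B_{t_n})$. I would show $\hat{\mathbb{E}}[\xi]=\sup_{\theta}E^{P'}[\varphi(X^\theta_{t_1},\dots,X^\theta_{t_n})]$, working backward over the intervals $[t_{k-1},t_k]$. On each interval, $u_k$ solves the HJB equation
\[
\partial_t u+\sup_{\sigma\in[\underline{\sigma},\overline{\sigma}]}\tfrac12\sigma^2\partial_x^2u=0,
\]
since $G(a)=\sup_\sigma\frac12\sigma^2a$. Hence $u_k$ is the value function of the control problem with dynamics $dX=\theta\,dW$, and the dynamic programming principle glues the intervals together. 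The delicate point is regularity: $u_k$ is only a viscosity solution. Krylov's $C^{1+\alpha/2,2+\alpha}$ interior regularity for uniformly parabolic concave equations (recall $\underline{\sigma}>0$) lets me apply Itô's formula to get $E^{P'}[\varphi(X^\theta)]\le u(0,0)$. Near-optimal feedback controls $\theta_s=\overline{\sigma}$ or $\underline{\sigma}$, chosen according to the sign of $\partial_x^2u$ on a time grid and with a mollified terminal function, give the reverse inequality up to $\varepsilon$. Because $\varphi$ is bounded and continuous, the supremum over $\mathcal{P}_0$ equals the supremum over its closure $\mathcal{P}$.

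\emph{Extension to $L_G^1(\Omega_T)$.} Both sides are $1$-Lipschitz with respect to $\|\cdot\|_{L^1_G}$:
\[
\Bigl|\sup_{P\in\mathcal{P}}E^P[\xi]-\sup_{P\in\mathcal{P}}E^P[\eta]\Bigr|\le\sup_{P\in\mathcal{P}}E^P[|\xi-\eta|]=\hat{\mathbb{E}}[|\xi-\eta|]
\]
for $\xi,\eta\in L_{ip}(\Omega_T)$. The equality $\sup_{P\in\mathcal{P}}E^P[|\xi-\eta|]=\hat{\mathbb{E}}[|\xi-\eta|]$ holds because $|\xi-\eta|\in L_{ip}(\Omega_T)$. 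It follows that every $\xi\in L^1_G(\Omega_T)$ has a quasi-surely defined version that is $P$-integrable for all $P\in\mathcal{P}$. Approximating such $\xi$ by $\xi_n\in L_{ip}(\Omega_T)$ in $L^1_G$ then transfers the representation, and it extends the conditional expectation consistently.

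\emph{Main obstacle.} The hardest step is the control-theoretic identification on $L_{ip}(\Omega_T)$, specifically proving the lower bound, that the supremum is attained asymptotically. I would first try to handle it through Krylov regularity combined with an $\varepsilon$-optimal piecewise-constant control, and fall back on the Krylov--Safonov stability of the value function if the regularity route fails.
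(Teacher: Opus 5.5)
The paper gives no proof of this statement; it is quoted from \cite{DHP11}. Your outline is correct and has the same architecture as the stochastic-control proof there. $\mathcal{P}$ is the weak closure of the laws of $\int_0^\cdot\theta_s\,dW_s$ with $\theta$ valued in $[\underline{\sigma},\overline{\sigma}]$, and tightness follows from the fourth-moment bound via Kolmogorov's criterion and Prokhorov's theorem. The identity is checked on $L_{ip}(\Omega_T)$ interval by interval and then transferred to $L^1_G(\Omega_T)$ by density.

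The genuine difference is the step identifying $u_k$ with a control value function. As far as I recall, Denis, Hu and Peng invoke standard control theory here. Through the dynamic programming principle, $v(t,x)=\sup_\theta E[\varphi(x+\int_t^T\theta_s\,dW_s)]$ is a viscosity solution of $\partial_t v+G(\partial_x^2 v)=0$. Uniqueness of viscosity solutions then forces $v$ to coincide with the $G$-heat solution. That route needs no regularity and works unchanged for degenerate ($\underline{\sigma}=0$) or matrix-valued $G$.

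Your verification argument (It\^{o}'s formula for the upper bound, a bang--bang feedback frozen on a time grid for the lower bound) instead relies on $\underline{\sigma}>0$ and Evans--Krylov estimates. In exchange it gives explicit $\varepsilon$-optimal controls. The gluing across the $t_k$ also reduces to conditioning with the past values frozen, so no dynamic programming principle for the value function is needed.

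To close the lower bound, you need moduli of continuity of $\partial_t u_k$ and $\partial_x^2u_k$ on $[t_{k-1},t_k-\delta]\times\mathbb{R}$ that are uniform in $x$ and in the frozen parameters $x_1,\dots,x_{k-1}$. These hold because the interior estimates depend only on $\|\varphi\|_\infty$, $\delta$, $\underline{\sigma}$ and $\overline{\sigma}$, and the equation is translation invariant. The leftover interval $[t_k-\delta,t_k]$ costs $O(\sqrt{\delta})$.

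Similarly, the ``it follows'' in your extension step stands for a Borel--Cantelli argument for the capacity $c$ along a fast Cauchy subsequence. It uses Markov's inequality under each $P$ together with the representation already proved on $L_{ip}(\Omega_T)$, followed by Fatou's lemma to obtain $\sup_{P\in\mathcal{P}}E^P[|\xi_n-\xi|]\to0$. This step is routine but should be written out.
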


Let $\mathcal{P}$ be a weakly compact set that represents $\hat{\mathbb{E}}$.
For this $\mathcal{P}$, we define the capacity%
\[
c(A):=\sup_{P\in\mathcal{P}}P(A),\ A\in\mathcal{B}(\Omega_T).
\]
A set $A\in\mathcal{B}(\Omega_T)$ is called polar if $c(A)=0$.  A
property holds $``quasi$-$surely"$ (q.s.) if it holds outside a
polar set. In the following, we do not distinguish two random variables $X$ and $Y$ if $X=Y$, q.s.

\begin{definition}
	\label{def2.6} Let $M_{G}^{0}(0,T)$ be the collection of processes in the
	following form: for a given partition $\{t_{0},\cdot\cdot\cdot,t_{N}\}$ of $[0,T]$,
	\[
	\eta_{t}(\omega)=\sum_{j=0}^{N-1}\xi_{j}(\omega)\mathbf{1}_{[t_{j},t_{j+1})}(t),
	\]
	where $\xi_{i}\in L_{ip}(\Omega_{t_{i}})$, $i=0,1,2,\cdot\cdot\cdot,N-1$. For each
	$p\geq1$ and $\eta\in M_G^0(0,T)$, let $\|\eta\|_{H_G^p}:=\{\hat{\mathbb{E}}[(\int_0^T|\eta_s|^2ds)^{p/2}]\}^{1/p}$, $\Vert\eta\Vert_{M_{G}^{p}}:=(\mathbb{\hat{E}}[\int_{0}^{T}|\eta_{s}|^{p}ds])^{1/p}$ and denote by $H_G^p(0,T)$,  $M_{G}^{p}(0,T)$ the completion
	of $M_{G}^{0}(0,T)$ under the norm $\|\cdot\|_{H_G^p}$, $\|\cdot\|_{M_G^p}$, respectively.
\end{definition}

We denote by $\langle B\rangle$ the quadratic variation process of the $G$-Brownian motion $B$. For two processes $ \xi\in M_{G}^{1}(0,T)$ and $ \eta\in M_{G}^{2}(0,T)$,
the $G$-It\^{o} integrals $(\int^{t}_0\xi_sd\langle
B\rangle_s)_{0\leq t\leq T}$ and $(\int^{t}_0\eta_sdB_s)_{0\leq t\leq T}$ are well defined, see  Li and Peng \cite{lp} and Peng \cite{P19}. The following proposition can be regarded as the Burkholder--Davis--Gundy inequality under $G$-expectation framework.
\begin{proposition}[\cite{HJPS2}]\label{BDG}
	If $\eta\in H_G^{\alpha}(0,T)$ with $\alpha\geq 1$ and $p\in(0,\alpha]$, then we have
	\begin{displaymath}
		\underline{\sigma}^p {c_{p,T}}\hat{\mathbb{E}}_t\bigg(\int_t^T |\eta_s|^2ds\bigg)^{p/2}\leq
		\hat{\mathbb{E}}_t\bigg[\sup_{u\in[t,T]}\bigg|\int_t^u\eta_s dB_s\bigg|^p\bigg]\leq
		\bar{\sigma}^p C_{p,T}\hat{\mathbb{E}}_t\bigg(\int_t^T |\eta_s|^2ds\bigg)^{p/2},
	\end{displaymath}
	where $0<c_{p,T}<C_{p,T}<\infty$ are constants depending on $p, T$.
\end{proposition}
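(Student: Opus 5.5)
We prove the two inequalities of Proposition \ref{BDG} by applying the classical Burkholder--Davis--Gundy inequality under each measure in $\mathcal{P}$ and then taking a supremum. The key fact is the structure of $\mathcal{P}$ from Theorem \ref{the1.1}. One may take $\mathcal{P}$ to be the (closure of the) family of laws of $\int_0^\cdot \theta_s dW_s$, where $W$ is a classical Brownian motion and $\theta$ is progressively measurable with values in $[\underline{\sigma},\bar{\sigma}]$. Under every $P\in\mathcal{P}$ the canonical process $B$ is therefore a continuous martingale with
\begin{displaymath}
\underline{\sigma}^2\,dt\le d\langle B\rangle_t\le \bar{\sigma}^2\,dt, \quad P\text{-a.s.}
\end{displaymath}
For $\eta\in H_G^\alpha(0,T)$, the $G$-It\^o integral coincides $P$-a.s. with the classical It\^o integral under $P$. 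This holds first for $\eta\in M_G^0(0,T)$ and then extends by density.

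\textbf{Step 1 (one measure, conditional form).} Fix $P\in\mathcal{P}$ and $t\in[0,T]$. By the classical conditional BDG inequality for the continuous martingale $\int_t^\cdot\eta_sdB_s$ on $[t,T]$, for $p>0$ there are universal constants $0<c_p<C_p$ such that
\begin{displaymath}
c_p\,E^P\Big[\Big(\int_t^T|\eta_s|^2d\langle B\rangle_s\Big)^{p/2}\Big|\mathcal{F}_t\Big]\le E^P\Big[\sup_{u\in[t,T]}\Big|\int_t^u\eta_sdB_s\Big|^p\Big|\mathcal{F}_t\Big]\le C_p\,E^P\Big[\Big(\int_t^T|\eta_s|^2d\langle B\rangle_s\Big)^{p/2}\Big|\mathcal{F}_t\Big].
\end{displaymath}
Using the bounds on $d\langle B\rangle$ to compare $d\langle B\rangle_s$ with $ds$ gives the claimed inequality under $P$, with the factors $\underline{\sigma}^p$ and $\bar{\sigma}^p$.

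\textbf{Step 2 (pass to $\hat{\mathbb{E}}_t$).} We use the conditional representation
\begin{displaymath}
\hat{\mathbb{E}}_t[X]=\operatorname*{ess\,sup}^{P}_{Q\in\mathcal{P}(t,P)}E^Q[X|\mathcal{F}_t], \quad P\text{-a.s.},
\end{displaymath}
where $\mathcal{P}(t,P)$ is the set of measures in $\mathcal{P}$ that agree with $P$ on $\mathcal{F}_t$. This representation is due to Soner--Touzi--Zhang and Hu--Peng, and it applies to random variables in $L_G^1(\Omega_T)$. Taking the essential supremum over $Q$ of both sides of Step 1 yields both inequalities $P$-a.s. for every $P$, hence quasi-surely.

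To justify this, the quantities $\sup_{u}|\int_t^u\eta dB|^p$ and $(\int_t^T|\eta_s|^2ds)^{p/2}$ must lie in $L_G^1$, at least as limits where $\hat{\mathbb{E}}_t$ is defined. When $p\le\alpha$ this follows from $\eta\in H_G^\alpha$, the upper bound itself, and an approximation by $M_G^0$ processes using $|a^{p}-b^{p}|$ estimates. This integrability is exactly where the restriction $p\le\alpha$ enters.

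\textbf{Main obstacle.} The delicate point is Step 2: the conditional representation of $\hat{\mathbb{E}}_t$ and the membership of the sup functional in $L_G^1$. I would handle it by proving everything first for $\eta\in M_G^0(0,T)$. In that case both quantities are explicit continuous functionals of $B$ at finitely many times, so $\hat{\mathbb{E}}_t$ is given by the PDE construction and the essential-supremum representation applies directly. I would then extend to $H_G^\alpha$ by density, using the unconditional estimate (Step 2 with $t=0$) to control the approximation errors in $L_G^1$, together with the continuity of $\hat{\mathbb{E}}_t$ on $L_G^1$.
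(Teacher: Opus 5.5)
Your proposal is correct and is essentially the standard argument behind this result; the paper gives no proof of its own and simply imports it from Hu, Ji, Peng and Song. That argument combines three ingredients: the classical conditional BDG inequality under each $Q\in\mathcal{P}(t,P)$ with $\underline{\sigma}^2dt\le d\langle B\rangle_t\le\bar{\sigma}^2dt$; the essential-supremum representation of $\hat{\mathbb{E}}_t$ on $L_G^1(\Omega_T)$; and density of $M_G^0(0,T)$ in $H_G^\alpha(0,T)$.

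One small correction to your last paragraph. For $\eta\in M_G^0(0,T)$ the quantity $\sup_{u\in[t,T]}|\int_t^u\eta_s dB_s|^p$ depends on the whole path on $[t,T]$, not on finitely many values of $B$, so the PDE construction of $\hat{\mathbb{E}}_t$ does not apply to it. What the essential-supremum representation actually needs is membership in $L_G^1(\Omega_T)$. You can get this either from continuity in $\omega$ together with moment bounds, or by approximating the supremum over finite time grids.
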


Let $$S_G^0(0,T)=\Big\{h(t,B_{t_1\wedge t}, \ldots,B_{t_n\wedge t}):t_1,\ldots,t_n\in[0,T],\; h\in C_{b,Lip}(\mathbb{R}^{n+1})\Big\}.$$ For $p\geq 1$ and $\eta\in S_G^0(0,T)$, set $\|\eta\|_{S_G^p}=\{\hat{\mathbb{E}}[\sup_{t\in[0,T]}|\eta_t|^p]\}^{1/p}$. Denote by $S_G^p(0,T)$ (resp., $S^\infty_G(0,T)$) the completion of $S_G^0(0,T)$ under the norm $\|\cdot\|_{S_G^p}$ (resp., $\|\cdot\|_{S_G^\infty}$, where $\|\eta\|_{S^\infty_G}:=\|\sup_{t\in[0,T]}|\eta_t|\|_{L^\infty_G}$). For $\xi\in L_{ip}(\Omega_T)$, let $\mathcal{E}(\xi)=\hat{\mathbb{E}}[\sup_{t\in[0,T]}\hat{\mathbb{E}}_t[\xi]]$. For $p\geq 1$ and $\xi\in L_{ip}(\Omega_T)$, define $\|\xi\|_{p,\mathcal{E}}=[\mathcal{E}(|\xi|^p)]^{1/p}$ and denote by $L_{\mathcal{E}}^p(\Omega_T)$ the completion of $L_{ip}(\Omega_T)$ under $\|\cdot\|_{p,\mathcal{E}}$. Similar to the classical Doob's maximal inequality, the following theorem holds.
\begin{theorem}[\cite{S11}]\label{the1.2}
	For any $\alpha\geq 1$ and $\delta>0$, $L_G^{\alpha+\delta}(\Omega_T)\subset L_{\mathcal{E}}^{\alpha}(\Omega_T)$. More precisely, for any $1<\gamma<\beta:=(\alpha+\delta)/\alpha$ and $\gamma\leq 2$, we have
	\begin{displaymath}
		\|\xi\|_{\alpha,\mathcal{E}}^{\alpha}\leq \gamma^*\left\{\|\xi\|_{L_G^{\alpha+\delta}}^{\alpha}+14^{1/\gamma}
		C_{\beta/\gamma}\|\xi\|_{L_G^{\alpha+\delta}}^{(\alpha+\delta)/\gamma}\right\},\quad \forall \xi\in L_{ip}(\Omega_T),
	\end{displaymath}
	where $C_{\beta/\gamma}=\sum_{i=1}^\infty i^{-\beta/\gamma}$ and $\gamma^*=\gamma/(\gamma-1)$.
\end{theorem}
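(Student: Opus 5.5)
The plan is to reduce the maximal inequality to a weak-type (distributional) estimate for the martingale $M_t:=\hat{\mathbb{E}}_t[|\xi|^\alpha]$, and then recover the $\alpha$-th moment by a layer-cake summation over the levels $i\in\mathbb{N}$. This summation is where the constant $C_{\beta/\gamma}=\sum_i i^{-\beta/\gamma}$ will come from. By density it suffices to take $\xi\in L_{ip}(\Omega_T)$; the inequality then extends to the completion by continuity of both sides in $\|\cdot\|_{L_G^{\alpha+\delta}}$. Write $\eta=|\xi|^\alpha$ and $\beta=(\alpha+\delta)/\alpha>1$, so that $\hat{\mathbb{E}}[\eta^\beta]=\|\xi\|_{L_G^{\alpha+\delta}}^{\alpha+\delta}$. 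The task becomes bounding $\hat{\mathbb{E}}[\sup_t \hat{\mathbb{E}}_t[\eta]]$ by $\|\eta\|_{L^1_G}$-type and $\|\eta\|_{L^\beta_G}$-type quantities.

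\textbf{Step 1 (truncation at level $i$).} For each level $i$, split $\eta=\eta\wedge i+(\eta-i)^+$. By sublinearity of the conditional expectation,
\[
\hat{\mathbb{E}}_t[\eta]\le \hat{\mathbb{E}}_t[\eta\wedge i]+\hat{\mathbb{E}}_t[(\eta-i)^+].
\]

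\textbf{Step 2 (weak-type bound).} The key lemma is the following. For $\zeta\ge0$ in $L_{ip}$ and $\lambda>0$,
\[
c\Big(\sup_t \hat{\mathbb{E}}_t[\zeta]\ge\lambda\Big)\le \frac{C}{\lambda}\hat{\mathbb{E}}[\zeta].
\]
I would prove it under each $P\in\mathcal{P}$. The process $\hat{\mathbb{E}}_t[\zeta]$ is a $P$-supermartingale for every $P\in\mathcal{P}$, because $\hat{\mathbb{E}}_s[\cdot]\ge \mathrm{ess\,sup}_{P'} E^{P'}[\cdot|\mathcal{F}_s]$ along $P$. The classical maximal inequality for nonnegative supermartingales then gives $P(\sup_t\ge\lambda)\le \hat{\mathbb{E}}[\zeta]/\lambda$ with constant $C=1$.

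The $14^{1/\gamma}$ and the exponent $\gamma$ suggest that the original argument is instead a discrete, Lipschitz-approximation proof that loses constants. I would present the supermartingale argument and absorb whatever constant appears. This step, justifying the supermartingale property under each $P$ via the representation of Theorem 1.1 together with continuity of paths after approximation, is the main obstacle.

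\textbf{Step 3 (summation).} With $\gamma\in(1,\beta)$ and $\gamma\le2$, I would use a Hölder-type argument. First,
\[
\hat{\mathbb{E}}\Big[\sup_t\hat{\mathbb{E}}_t[\eta]\Big]\le \Big(\hat{\mathbb{E}}\Big[\sup_t\big(\hat{\mathbb{E}}_t[\eta]\big)^{\gamma}\Big]\Big)^{1/\gamma}.
\]
I would then use Doob's $L^\gamma$ inequality for each $P$-submartingale $E^P$-dominated process, which is where $\gamma^*=\gamma/(\gamma-1)$ enters. Concretely, the layer-cake formula
\[
\hat{\mathbb{E}}[\sup_t M_t]\le \sum_{i\ge0} c\Big(\sup_t M_t> i\Big)
\]
gives at level $i$ the weak-type bound $c(\cdot)\le \hat{\mathbb{E}}[\eta 1_{\{\eta>i\}}]/i$ up to the part already controlled by $\hat{\mathbb{E}}[\eta]$. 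Bounding $\hat{\mathbb{E}}[\eta1_{\{\eta>i\}}]\le \hat{\mathbb{E}}[\eta^\beta]\,i^{1-\beta}$ by Chebyshev, and passing through the $\gamma$-power with Hölder, yields terms of the form $i^{-\beta/\gamma}\hat{\mathbb{E}}[\eta^\beta]^{1/\gamma}$. Summing over $i$ gives $C_{\beta/\gamma}\|\xi\|_{L_G^{\alpha+\delta}}^{(\alpha+\delta)/\gamma}$, and the bounded part $\eta\wedge i$ contributes $\gamma^*\|\xi\|^\alpha_{L_G^{\alpha+\delta}}$. The numerical constant $14^{1/\gamma}$ arises from the capacity of the exceptional sets, and I would not try to optimize it. Since $\mathcal{E}(|\xi|^\alpha)\ge \hat{\mathbb{E}}|\xi|^\alpha$ is finite, this gives the inclusion $L_G^{\alpha+\delta}\subset L^\alpha_{\mathcal{E}}$.
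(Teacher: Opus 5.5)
The paper gives no proof of this result (it is quoted from \cite{S11}), so your argument has to stand on its own.

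Steps 1 and 2 are correct, and Step 2 is routine rather than the main obstacle. For $\zeta\in L_{ip}(\Omega_T)$ the process $t\mapsto\hat{\mathbb{E}}_t[\zeta]$ has continuous paths. For $s<t$, $X=\hat{\mathbb{E}}_t[\zeta]$ and bounded $\phi\geq 0$ in $L_{ip}(\Omega_s)$, one has $E^P[\phi(X-\hat{\mathbb{E}}_s[X])]\leq\hat{\mathbb{E}}[\phi\,\hat{\mathbb{E}}_s[X-\hat{\mathbb{E}}_s[X]]]=0$. With a monotone class argument this makes the process a $P$-supermartingale for every $P\in\mathcal{P}$, so the weak-type bound holds with $C=1$.

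\textbf{The gap is Step 3.} Doob's $L^\gamma$ inequality needs a nonnegative \emph{sub}martingale or a martingale. But $\hat{\mathbb{E}}_t[\eta]$ is a \emph{super}martingale under each $P$, as your Step 2 shows. The $P$-martingale $E^P[\eta\,|\,\mathcal{F}_t]$ lies \emph{below} $\hat{\mathbb{E}}_t[\eta]$, so its maximum controls nothing. In fact the per-$P$ estimate $E^P[\sup_t(\hat{\mathbb{E}}_t[\eta])^\gamma]\leq(\gamma^*)^\gamma E^P[\eta^\gamma]$ is false. At $t=0$ the left side is already $(\hat{\mathbb{E}}[\eta])^\gamma$. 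Take $\eta=\psi(\sum_k|B_{t_{k+1}}-B_{t_k}|^2)$ on a fine partition, with $\psi$ Lipschitz, $\psi=0$ near $\underline{\sigma}^2T$ and $\psi=1$ on $[\bar{\sigma}^2T-\epsilon,\infty)$. Then $\hat{\mathbb{E}}[\eta]\approx 1$, while $\inf_{P\in\mathcal{P}}E^P[\eta^\gamma]=-\hat{\mathbb{E}}[-\eta^\gamma]\approx 0$.

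The rest of Step 3 is not derived but read off the target inequality. This covers three claims: that H\"older turns the level-$i$ bound into $i^{-\beta/\gamma}\hat{\mathbb{E}}[\eta^\beta]^{1/\gamma}$; that the bounded part contributes $\gamma^*\|\xi\|^\alpha_{L_G^{\alpha+\delta}}$; and that $14^{1/\gamma}$ comes from exceptional sets. Also, truncating at the same level $i$ as the threshold gives no information on $\{\sup_t\hat{\mathbb{E}}_t[\eta]>i\}$. The truncation level must sit strictly below the threshold.

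\textbf{How to close it.} Your Steps 1--2 suffice, with no $\gamma$ at all, once the truncation level is tied to the threshold. Fix $s>0$ and $\theta\in(0,1)$. By path continuity, $\hat{\mathbb{E}}_t[\eta]\leq\theta s+\hat{\mathbb{E}}_t[(\eta-\theta s)^+]$ for all $t$ simultaneously, q.s. Hence
\[
c\Big(\sup_t\hat{\mathbb{E}}_t[\eta]>s\Big)\leq\frac{\hat{\mathbb{E}}[(\eta-\theta s)^+]}{(1-\theta)s}.
\]
Now use $(x-a)^+\leq(\beta-1)^{\beta-1}\beta^{-\beta}a^{1-\beta}x^\beta$ and choose $\theta=(\beta-1)/\beta$. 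This gives exactly $c(\sup_t\hat{\mathbb{E}}_t[\eta]>s)\leq s^{-\beta}\hat{\mathbb{E}}[\eta^\beta]$. Then $\mathcal{E}(\eta)\leq\int_0^\infty\min\{1,s^{-\beta}\hat{\mathbb{E}}[\eta^\beta]\}\,ds=\frac{\beta}{\beta-1}(\hat{\mathbb{E}}[\eta^\beta])^{1/\beta}$, that is,
\[
\|\xi\|^\alpha_{\alpha,\mathcal{E}}\leq\frac{\beta}{\beta-1}\,\|\xi\|^\alpha_{L_G^{\alpha+\delta}},\qquad \xi\in L_{ip}(\Omega_T).
\]
Since $\gamma<\beta$ forces $\gamma^*>\beta/(\beta-1)$, this implies the stated inequality for every admissible $\gamma$. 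The inclusion follows by applying the bound to differences.

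The optimization of $\theta$ matters. Truncating at $i/2$ and summing over integer levels, as your Step 3 suggests, gives after rescaling the constant $1+2^\beta\sum_{i\geq 1}i^{-\beta}$. That still proves the inclusion. However, this constant exceeds $\gamma^*$ for $\gamma$ near $\min\{\beta,2\}$, so it would not give the inequality with the constants as stated.
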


\subsection{$G$-BMO martingales and its properties}

In this section, we recall some results of $G$-BMO martingale and $G$-Girsanov theorem. More details can be found in \cite{CT,HLSH}.

\begin{definition}
    For $Z\in H^2_G(0,T)$, a symmetric $G$-martingale $\int_0^\cdot Z_sdB_s$ on $[0,T]$ is called a $G$-BMO martingale if
    \begin{align*}
       \|Z\|_{BMO_G}^2:=\sup_{\P\in\mathcal{P}}\|Z\|^2_{BMO(\P)}=\sup_{\P\in\mathcal{P}}\sup_{\tau\in\mathcal{T}_0^T}\Bigg\|\E^\P_\tau\left[\int_\tau^T|Z_t|^2d\langle B\rangle_t\right] \Bigg\|_{L^\infty(\P)}<\infty,
    \end{align*}
    where $\mathcal{T}_0^T$ denotes the collection of all $\mathbb{F}$-stopping times taking values in $[0,T]$ and $\|Z\|_{BMO(\P)}$ is the BMO norm of $\int_0^\cdot Z_s dB_s$ under probability $\P$. $Z$ is called a $G$-BMO martingale generator.
\end{definition}

We denote by $BMO_G$ be the collection of all $Z\in H^2_G(0,T)$ such that $\|Z\|_{BMO_G}<\infty$. 

\begin{lemma}\label{lem2.2}
    Given $Z\in BMO_G$, for any $\alpha\geq 1$ and any $t\in[0,T]$, we have 
    \begin{align*}
        \hE_t\left[\left(\int_t^T|Z_s|^2d\langle B\rangle_s\right)^{\frac{\alpha}{2}}\right]\leq C_\alpha\|Z\|_{BMO_G}^\alpha,
    \end{align*}
    where $C_\alpha>0$ is a constant depending only on $\alpha$.
\end{lemma}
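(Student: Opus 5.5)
The plan is to reduce the statement to the classical energy inequality for BMO martingales under each fixed $\P\in\mathcal{P}$, and then take the supremum over $\mathcal{P}$. The key point is that the resulting constants are universal and do not depend on $\P$.

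Fix $t\in[0,T]$ and $\P\in\mathcal{P}$. Under $\P$, the process $M_s:=\int_0^s Z_r dB_r$ is a continuous martingale with quadratic variation $\langle M\rangle_s=\int_0^s|Z_r|^2d\langle B\rangle_r$. By definition of $\|Z\|_{BMO_G}$, we have $\|M\|_{BMO(\P)}^2\le \|Z\|^2_{BMO_G}$.

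For integer $k\ge1$, the classical energy inequality (Kazamaki; proved by the identity $(\langle M\rangle_T-\langle M\rangle_t)^k=k\int_t^T(\langle M\rangle_T-\langle M\rangle_s)^{k-1}d\langle M\rangle_s$, conditioning inside, and induction on $k$) gives
\[
\E^\P_t\big[(\langle M\rangle_T-\langle M\rangle_t)^k\big]\le k!\,\|M\|^{2k}_{BMO(\P)}\le k!\,\|Z\|^{2k}_{BMO_G},\quad \P\text{-a.s.}
\]
For a general $\alpha\ge1$, choose an integer $k\ge\alpha/2$ and apply the conditional Jensen inequality. This yields
\[
\E^\P_t\big[(\langle M\rangle_T-\langle M\rangle_t)^{\alpha/2}\big]\le (k!)^{\alpha/(2k)}\|Z\|^{\alpha}_{BMO_G},
\]
with constant $C_\alpha:=(k!)^{\alpha/(2k)}$, which does not depend on $\P$.

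To pass from the fixed-$\P$ estimate to the conditional $G$-expectation, use the representation of $\hE_t$ as an essential supremum. For $\xi\in L^1_G(\Omega_T)$ one has, $\P$-a.s.,
\[
\hE_t[\xi]=\operatorname*{ess\,sup}^{\P}_{\P'\in\mathcal{P}(t,\P)}\E^{\P'}_t[\xi],
\]
where $\mathcal{P}(t,\P)$ is the set of measures in $\mathcal{P}$ agreeing with $\P$ on $\mathcal{F}_t$ (Soner--Touzi--Zhang, Hu--Peng). Applying the fixed-measure bound to each $\P'\in\mathcal{P}(t,\P)$ gives the bound $\P$-a.s. for every $\P$, and hence quasi-surely.

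The main obstacle is justifying that $\xi=(\int_t^T|Z_s|^2d\langle B\rangle_s)^{\alpha/2}$ lies in a space where this representation holds. It is only known that $Z\in H^2_G$, so $\xi$ may not belong to $L^1_G$. I would handle this by truncation, for instance using $\xi\wedge N$ or an approximation of $Z$ in $H_G^2$, and then pass to the monotone limit. Alternatively, I would interpret $\hE_t$ through the extended (upper) conditional expectation on the space of measurable random variables, for which the essential-supremum formula holds by construction.
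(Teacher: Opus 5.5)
Your proposal is correct and follows essentially the route behind this lemma, which the paper does not prove itself but imports from \cite{CT,HLSH}: the classical energy inequality under each $\P\in\mathcal{P}$ with a $\P$-independent constant, then conditional Jensen, then the essential-supremum representation of $\hE_t$. The integrability point you flag is harmless: write $\xi=(\int_t^T|Z_s|^2d\langle B\rangle_s)^{\alpha/2}$; your per-$\P$ bound with $2\alpha$ in place of $\alpha$ gives $\sup_{\P\in\mathcal{P}}\E^{\P}[\xi^2]<\infty$, so the quasi-continuous variable $\xi$ is uniformly integrable and hence lies in $L^1_G(\Omega_T)$, where the representation and your truncation limit apply directly.
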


\begin{lemma}\label{lem2.3}
    For $Z\in BMO_G$, the process
    \begin{align*}
        \mathcal{E}(Z)_t:=\exp\left(\int_0^t Z_s dB_s-\frac{1}{2}\int_0^t|Z_s|^2d\langle B\rangle_s\right), \ t\ge 0
    \end{align*}
    is a symmetric $G$-martingale.
\end{lemma}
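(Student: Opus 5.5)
To show that $\mathcal{E}(Z)$ is a symmetric $G$-martingale, I would work probability by probability and then move back to the $G$-framework. Fix $\P\in\mathcal{P}$. Under $\P$ the canonical process $B$ is a continuous martingale whose quadratic variation $\langle B\rangle$ satisfies $\underline{\sigma}^2dt\le d\langle B\rangle_t\le\overline{\sigma}^2dt$. The $G$-It\^o integral $\int_0^\cdot Z_sdB_s$ agrees $\P$-a.s. with the classical It\^o integral. Since $\|Z\|_{BMO(\P)}\le\|Z\|_{BMO_G}<\infty$, the process $\int_0^\cdot Z_sdB_s$ is a BMO martingale under $\P$. Kazamaki's criterion then makes $\mathcal{E}(Z)$ a true uniformly integrable $\P$-martingale.

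The key point is uniformity in $\P$. I would invoke the reverse Hölder inequality: there exists $r>1$ with
\begin{displaymath}
\E^\P_\tau\big[\mathcal{E}(Z)_T^r\big]\le C\,\mathcal{E}(Z)_\tau^r,
\end{displaymath}
where $r$ and $C$ depend only on the BMO norm, and hence only on $\|Z\|_{BMO_G}$ and not on $\P$. Combined with the energy-type bounds of Lemma \ref{lem2.2}, this gives $\sup_{\P\in\mathcal{P}}\E^\P[\mathcal{E}(Z)_T^{r}]<\infty$ for some $r>1$.

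Next I would show that $\mathcal{E}(Z)_t\in L^1_G(\Omega_t)$, so that conditional $G$-expectations make sense. Set $X_t=\int_0^tZ_sdB_s-\frac12\int_0^t|Z_s|^2d\langle B\rangle_s$, which lies in $L^1_G$ because $Z\in H^2_G$. Truncate with $\mathcal{E}^N_t=\exp(X_t)\wedge N$; each $\mathcal{E}^N_t$ belongs to $L^1_G$ as a Lipschitz function of an $L^1_G$ element. The uniform $L^r$ bound gives
\begin{displaymath}
\hE\big[|\mathcal{E}(Z)_t-\mathcal{E}^N_t|\big]\le \sup_\P \E^\P\big[\mathcal{E}(Z)_t^r\big]/N^{r-1}\to0,
\end{displaymath}
so $\mathcal{E}(Z)_t$ is in the closure, using the characterization of $L^1_G$ as q.c. functions with uniform integrability.

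For the martingale property, apply the $G$-Itô formula: $d\mathcal{E}(Z)_t=\mathcal{E}(Z)_tZ_tdB_t$. Thus $\mathcal{E}(Z)$ is a stochastic integral with no $d\langle B\rangle$ or $dt$ term, formally a symmetric $G$-martingale. To make this rigorous one needs $\mathcal{E}(Z)Z\in H^1_G(0,T)$ (or $M^2_G$), since the $G$-integral of such an integrand is a symmetric martingale with $\hE_t[\int_t^T\eta dB]=-\hE_t[-\int_t^T\eta dB]=0$. I would obtain this by Hölder:
\begin{displaymath}
\hE\Big[\Big(\int_0^T\mathcal{E}(Z)_s^2|Z_s|^2ds\Big)^{1/2}\Big]\le \hE\Big[\sup_s\mathcal{E}(Z)_s^{r}\Big]^{1/r}\,\hE\Big[\Big(\int_0^T|Z_s|^2ds\Big)^{r'/2}\Big]^{1/r'}.
\end{displaymath}
The second factor is finite by Lemma \ref{lem2.2}. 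The first is finite by Doob's inequality under each $\P$, uniformly in $\P$.

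The main obstacle is the approximation step: showing that the integrand $\mathcal{E}(Z)Z$ belongs to the $G$-space, and not merely that it has finite norm, since quasi-continuity must hold. I would handle it by localization. Use $Z^N$ bounded approximants in $M^0_G$, for which $\mathcal{E}(Z^N)$ is symmetric by direct computation, and pass to the limit using the uniform reverse Hölder bounds. Alternatively, if only $\E^\P$-martingality is available for all $\P$, I would use the representation $\hE_t[\xi]=\operatorname{ess\,sup}_{\P'}\E^{\P'}_t[\xi]$ to conclude $\hE_t[\pm\mathcal{E}(Z)_T]=\pm\mathcal{E}(Z)_t$.
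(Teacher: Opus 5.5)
The paper does not prove this lemma. It quotes it from \cite{HLSH} (see also \cite{CT}), so your proposal has to stand on its own. It does, but the complete argument is the one you relegate to your last sentence.

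\textbf{What works.} Your first steps are sound. $\P$-wise Kazamaki plus the reverse H\"older inequality, whose constants depend only on $\|Z\|_{BMO_G}$, gives $\sup_{\P\in\mathcal{P}}\E^\P[\mathcal{E}(Z)_T^q]\le C_q$ for some $q>1$. This is Lemma \ref{lem2.4 2.5}(i) with $\tau=0$ and $q$ chosen close to $1$ so that $\phi(q)>\|Z\|_{BMO_G}$, which is possible since $\phi(q)\to\infty$ as $q\downarrow 1$; Lemma \ref{lem2.2} plays no role. Your truncation $e^{X_t}\wedge N$ then shows $\mathcal{E}(Z)_t\in L^1_G(\Omega_t)$.

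That membership is exactly what you need to use the representation $\hE_t[\xi]=\operatorname{ess\,sup}_{\P'\in\mathcal{P}(t,\P)}\E^{\P'}_t[\xi]$, $\P$-a.s., valid for $\xi\in L^1_G(\Omega_T)$, where $\mathcal{P}(t,\P)=\{\P'\in\mathcal{P}:\P'=\P\text{ on }\mathcal{F}_t\}$. For every such $\P'$, $\E^{\P'}_t[\pm\mathcal{E}(Z)_T]=\pm\mathcal{E}(Z)_t$ holds $\P'$-a.s. Both sides are $\mathcal{F}_t$-measurable and $\P'=\P$ on $\mathcal{F}_t$, so it also holds $\P$-a.s. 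Hence $\hE_t[\pm\mathcal{E}(Z)_T]=\pm\mathcal{E}(Z)_t$ q.s., and likewise with $T$ replaced by any $s\ge t$. This should be your main line, not a fallback.

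\textbf{The gap in your primary route.} The $G$-It\^o route is not closed, and the repair you propose fails as stated. Approximants $Z^N\in M^0_G$ of $Z$ in $H^2_G$ carry no uniform bound on $\|Z^N\|_{BMO_G}$. Convergence in $H^2_G$ does not control $\E^\P_\tau[\int_\tau^T|Z^N_s|^2d\langle B\rangle_s]$ at stopping times, and each $Z^N$ is bounded only by a constant that is not controlled. So there is no uniform reverse H\"older estimate for $\mathcal{E}(Z^N)$. Without it you have no uniform integrability to pass $\hE_t[\pm\mathcal{E}(Z^N)_T]=\pm\mathcal{E}(Z^N)_t$ to the limit.

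If you want an approximation argument, truncate instead: $Z^N=(Z\wedge N)\vee(-N)$ satisfies $|Z^N|\le|Z|$, hence $\|Z^N\|_{BMO_G}\le\|Z\|_{BMO_G}$. That gives uniform constants and, with some work, $\mathcal{E}(Z^N)_t\to\mathcal{E}(Z)_t$ in $L^1_G$. You would then still need a separate proof that $\mathcal{E}(Z^N)$ is a symmetric $G$-martingale for bounded but non-simple $Z^N$, which is no longer a direct computation.

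Since the essential-supremum argument is available, you can drop the $G$-It\^o formula and the $H^1_G$ estimate for $\mathcal{E}(Z)Z$ altogether.
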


\begin{lemma}\label{lem2.4 2.5}
    (i) Let $\phi(x)=\left(1+\frac{1}{x^2}\log \frac{2x-1}{2(x-1)}\right)^{\frac{1}{2}}-1$ and $1<q<\infty$. Suppose that $\|Z\|_{BMO_G}<\phi(q)$. Then, we have 
    \begin{align*}
        \sup_{\P\in\mathcal{P}}\sup_{\tau\in \mathcal{T}_0^T}\left\|\E^{\P}_\tau\left[\frac{\mathcal{E}(Z)_T^q}{\mathcal{E}(Z)_\tau^q}\right]\right\|_{L^\infty(\P)}\leq C_q,
    \end{align*}
    where $C_q>0$ is a constant depending only on $q$. 

    \noindent (ii) Given $1<r<\infty$, suppose that $\|Z\|_{BMO_G}<\frac{\sqrt{2}}{2}(\sqrt{r}-1)$. Then, we have
    \begin{align*}
        \sup_{\P\in\mathcal{P}}\sup_{\tau\in \mathcal{T}_0^T}\left\|\E^{\P}_\tau\left[\left(\frac{\mathcal{E}(Z)_\tau}{\mathcal{E}(Z)_T}\right)^{\frac{1}{r-1}}\right]\right\|_{L^\infty(\P)}\leq C_r,
    \end{align*}
    where $C_r>0$ is a constant depending only on $r$.
\end{lemma}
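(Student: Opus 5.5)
The plan is to reduce both parts to the classical reverse Hölder and Muckenhoupt results for BMO martingales under a single probability, applied uniformly over $\P\in\mathcal{P}$. Fix $\P\in\mathcal{P}$. Under $\P$ the canonical process $B$ is a continuous martingale with $d\langle B\rangle_t\in[\underline{\sigma}^2dt,\overline{\sigma}^2dt]$, and the $G$-It\^{o} integral $\int_0^\cdot Z_sdB_s$ agrees $\P$-a.s. with the classical stochastic integral. Consequently $M:=\int_0^\cdot Z_sdB_s$ is a continuous $\P$-martingale whose quadratic variation is $\int_0^\cdot|Z_s|^2d\langle B\rangle_s$. By the definition of $\|Z\|_{BMO_G}$ we have $\|M\|_{BMO(\P)}\le\|Z\|_{BMO_G}$, and this bound holds for every $\P\in\mathcal{P}$. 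Moreover $\mathcal{E}(Z)$ coincides $\P$-a.s. with the classical Dol\'eans-Dade exponential $\mathcal{E}(M)$.

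For part (i), I will invoke Kazamaki's reverse Hölder theorem (Kazamaki, Theorem 3.1). It states that if $\|M\|_{BMO(\P)}<\phi(q)$, then for every stopping time $\tau$,
\[
\E^\P_\tau\big[\mathcal{E}(M)_T^q\big]\le C_q\,\mathcal{E}(M)_\tau^q,
\]
where $C_q=2\big(1-\tfrac{2q-2}{2q-1}\exp\{q^2(\|M\|^2_{BMO}+2\|M\|_{BMO})\}\big)^{-1}$. The constant is increasing in $\|M\|_{BMO}$ and finite as long as $\|M\|_{BMO}<\phi(q)$. Since $\|M\|_{BMO(\P)}\le\|Z\|_{BMO_G}<\phi(q)$, we may replace $\|M\|_{BMO(\P)}$ by $\|Z\|_{BMO_G}$ in $C_q$ and obtain a bound independent of $\P$ and $\tau$. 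Taking the supremum over $\tau$ and then over $\P$ gives (i). The only point needing care is the proof of Kazamaki's result itself: it uses the John--Nirenberg-type exponential estimate $\E_\tau[\exp(a(\langle M\rangle_T-\langle M\rangle_\tau))]\le (1-a\|M\|^2_{BMO})^{-1}$ together with a Hölder splitting. Each step is carried out under one fixed $\P$, so nothing specific to $G$-expectation enters. I regard this step as routine.

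For part (ii), I will use the Muckenhoupt $A_p$ criterion (Kazamaki, Theorem 2.4 / Lemma 3.3). If $\|M\|_{BMO(\P)}<\frac{\sqrt2}{2}(\sqrt r-1)$, then $\mathcal{E}(M)$ satisfies $(A_r)$, namely
\[
\E^\P_\tau\big[(\mathcal{E}(M)_\tau/\mathcal{E}(M)_T)^{1/(r-1)}\big]\le C_r,
\]
with $C_r$ depending only on $r$ and an upper bound for $\|M\|_{BMO}$. The proof writes
\[
(\mathcal{E}_\tau/\mathcal{E}_T)^{1/(r-1)}=\exp\Big(-\tfrac{1}{r-1}(M_T-M_\tau)+\tfrac{1}{2(r-1)}(\langle M\rangle_T-\langle M\rangle_\tau)\Big).
\]
One then applies Cauchy--Schwarz to split this into the stochastic exponential of $-\tfrac{2}{r-1}M$, whose conditional expectation is at most $1$ by the supermartingale property, and an exponential of a multiple of $\langle M\rangle_T-\langle M\rangle_\tau$. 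The latter is controlled by the John--Nirenberg inequality precisely when the stated smallness condition holds. Again every constant depends only on $r$ and the BMO bound, so the supremum over $\P\in\mathcal{P}$ and $\tau$ stays finite.

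The main obstacle is to make sure the constants are genuinely uniform in $\P$. This requires tracking that they depend monotonically on the BMO norm alone. It does not depend on $\overline{\sigma}$ or $\underline{\sigma}$, because the BMO norm is already defined with $d\langle B\rangle$. A second point to check is the identification of the $G$-stochastic integral with the $\P$-stochastic integral for each $\P\in\mathcal{P}$. This holds for $Z\in H^2_G(0,T)$ by approximation with $M_G^0$ processes and the BDG inequality (Proposition \ref{BDG}).
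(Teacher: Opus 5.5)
Your proposal is correct and follows essentially the same route as the sources \cite{CT,HLSH}, from which the paper quotes this lemma without proof: fix $\P\in\mathcal{P}$, identify $\int_0^\cdot Z_s\,dB_s$ and $\mathcal{E}(Z)$ with their classical versions, and apply Kazamaki's reverse H\"older inequality and $(A_p)$ criterion, whose constants depend only on the exponent and on a bound for $\|Z\|_{BMO(\P)}\le\|Z\|_{BMO_G}$. Two precisions: in (i) what you actually get is a constant depending on $q$ and $\|Z\|_{BMO_G}$, since Kazamaki's bound degenerates as $\|Z\|_{BMO_G}\uparrow\phi(q)$ (this is harmless, because wherever uniformity matters the paper keeps a margin below the threshold that is uniform in the parameters); moreover, the regime $q\downarrow 1$, where $\phi(q)\to+\infty$ and which the paper exploits to handle large BMO bounds, cannot be reached by the H\"older-splitting-plus-John--Nirenberg argument you sketch for Kazamaki's theorem (it only yields thresholds below $\sqrt2$), so that theorem must be cited rather than re-derived this way; in (ii), by contrast, your Cauchy--Schwarz splitting does work under the stated hypothesis, since then $\frac{r+1}{(r-1)^2}\|Z\|_{BMO_G}^2<\frac12$, and it even gives the universal constant $\sqrt2$.
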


Given $Z\in BMO_G$ satisfying $\|Z\|_{BMO_G}<\phi(q)$, we may define a new $G$-(conditional) expectation $\widetilde{\E}$ associated with $\mathcal{E}(Z)$ as follows
\begin{equation}
    \widetilde{\E}_t[X]:=\hE_t\left[\frac{\mathcal{E}(Z)_T}{\mathcal{E}(Z)_t} X\right], \ X\in L^p_G(\Omega_T),
\end{equation}
where $p>\frac{q}{q-1}$. Then, we have the following Girsanov theorem. 

\begin{lemma}\label{lem2.6 2.7}
(i) Suppose that $Z\in BMO_G$. Then, the process $B-\int Zd\langle B\rangle$ is a $G$-Brownian motion under $\widetilde{\E}$.

 \noindent (ii) Suppose that $Z\in BMO_G$ satisfies $\|Z\|_{BMO_G}<\phi(q)$. Let $K$ be a non-increasing $G$-martingale with $K_0=0$, such that $K_t\in L^p_G(\Omega_t)$, $t\in[0,T]$ for some $p>\frac{q}{q-1}$. Then, $K$ is a non-increasing $G$-martingale under $\widetilde{\E}$.    
\end{lemma}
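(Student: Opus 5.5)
The plan is to prove (ii) first and then obtain (i) from it via the $G$-heat equation. The basic tool in both parts is the $G$-It\^o product rule for $\mathcal{E}(Z)$ against a finite-variation or $G$-It\^o process. The integrability of every term will come from Lemma \ref{lem2.2} and Lemma \ref{lem2.4 2.5}(i) through H\"older's inequality; this is why we impose $\|Z\|_{BMO_G}<\phi(q)$ and $p>\frac{q}{q-1}$.

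\emph{Step 1 (part (ii)).} Fix $t\le s\le T$. Since $K$ is non-increasing and $\mathcal{E}(Z)>0$, we get $\widetilde{\E}_t[K_s-K_t]\le 0$ immediately, so it suffices to prove $\hE_t\big[\mathcal{E}(Z)_s(K_s-K_t)\big]\ge 0$. Because $K$ has finite variation, $\langle \mathcal{E}(Z),K\rangle=0$, and the product rule gives
\begin{equation*}
\mathcal{E}(Z)_s(K_s-K_t)=\int_t^s \mathcal{E}(Z)_r\,dK_r+\int_t^s (K_r-K_t)\mathcal{E}(Z)_rZ_r\,dB_r .
\end{equation*}
The stochastic integral is a symmetric $G$-martingale once its integrand lies in $H^{1+\varepsilon}_G$. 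We will check this by H\"older's inequality: use $K\in L^p_G$ together with Doob's inequality (Theorem \ref{the1.2}), the bound $\sup_t\mathcal{E}(Z)_t\in L^q$ from Lemma \ref{lem2.4 2.5}(i), and $Z\in BMO_G$ via Lemma \ref{lem2.2}. It then remains to show $\hE_t\big[\int_t^s\mathcal{E}(Z)_r\,dK_r\big]=0$.

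\emph{Step 2.} For bounded nonnegative $\eta\in M_G^1$, the process $\int\eta\,dK$ is a non-increasing $G$-martingale; this is a known fact for non-increasing $G$-martingales, and we will verify it for step processes $\eta$ and pass to the limit. We apply it to $\eta^N=\mathcal{E}(Z)\wedge N$. We then let $N\to\infty$ using the estimate
\begin{equation*}
\hE\Big[\int_t^T(\mathcal{E}(Z)_r-N)^+\,d(-K_r)\Big]\le \Big\|\sup_r(\mathcal{E}(Z)_r-N)^+\Big\|_{L^{q'}_G}\,\|K_T\|_{L^p_G}\to 0,
\end{equation*}
where $\frac{1}{q'}+\frac{1}{p}=1$ and $q'<q$. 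The convergence to $0$ uses uniform integrability under the capacity, which follows from the $L^q$ bound with $q>q'$. I expect this truncation and limit passage to be the main technical obstacle, in particular making sure that $\mathcal{E}(Z)\wedge N$ genuinely belongs to the space where the known result applies. If direct membership is delicate, I would first approximate $Z$ by $M_G^0$ processes, which keeps the $BMO_G$ norm controlled.

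\emph{Step 3 (part (i)).} Let $\widetilde B:=B-\int_0^\cdot Z\,d\langle B\rangle$. To identify $\widetilde B$ as a $G$-Brownian motion under $\widetilde{\E}$, it suffices, by the inductive construction of $\hE$ from the $G$-heat equation, to show $\widetilde{\E}_t[\varphi(\widetilde B_T)]=u(t,\widetilde B_t)$, where $u$ solves $\partial_tu+G(\partial^2_{x}u)=0$ with $u(T,\cdot)=\varphi$; the multi-time case then follows by iterating with parameters. We take $\varphi$ smooth first and handle Lipschitz $\varphi$ by approximation. Since $\langle\widetilde B\rangle=\langle B\rangle$, $G$-It\^o's formula gives
\begin{equation*}
u(T,\widetilde B_T)=u(t,\widetilde B_t)+\int_t^T\partial_xu\,d\widetilde B_r+(K_T-K_t),\qquad K_\cdot:=\int_0^\cdot\tfrac12\partial^2_{x}u\,d\langle B\rangle_r-\int_0^\cdot G(\partial^2_{x}u)\,dr ,
\end{equation*}
and $K$ is a non-increasing $G$-martingale with bounded moments. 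The product rule shows that $\mathcal{E}(Z)\int\partial_xu\,d\widetilde B$ is a symmetric $G$-martingale: the $-Z\,d\langle B\rangle$ drift is exactly cancelled by the cross-variation with $\mathcal{E}(Z)$, and integrability is handled as in Step 1. Hence $\widetilde{\E}_t\big[\int_t^T\partial_xu\,d\widetilde B\big]=0$, and this term may be removed inside $\widetilde{\E}_t$ because symmetric $G$-martingales can be added and subtracted under the sublinear expectation. Part (ii) then gives $\widetilde{\E}_t[K_T-K_t]=0$, which yields the claimed identity.
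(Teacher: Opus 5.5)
The paper gives no proof of this lemma; it is recalled without proof from \cite{CT,HLSH}. So your proposal has to stand as a self-contained argument, and in outline it does. Proving (ii) first and deriving (i) from it is the right ordering. $G$-It\^o's formula for $u(r,\widetilde B_r)$ produces exactly one term that is not a symmetric martingale, namely the non-increasing $G$-martingale $\int\frac12\partial_x^2u\,d\langle B\rangle-\int G(\partial_x^2u)\,dr$, and (ii) says precisely that $\widetilde{\E}_t$ does not see it. Meanwhile, the cross-variation of $\mathcal{E}(Z)$ with $\int\partial_xu\,dB$ cancels the drift $-\int\partial_xu\,Z\,d\langle B\rangle$. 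This yields $\widetilde{\E}_t[\varphi(\widetilde B_T)]=u(t,\widetilde B_t)$ by direct computation, without uniqueness for linear $G$-BSDEs with a BMO coefficient and without approximating $Z$ by bounded processes. For (ii), the mechanism is the right one: the pathwise product rule under each $\P\in\mathcal{P}$, combined with the fact that $\int X\,dK$ is a non-increasing $G$-martingale for nonnegative continuous $X$ (as in Lemma 3.4 in \cite{HJPS1}, which the paper itself uses in Proposition \ref{est-Y1-Y2} and Lemma \ref{Ynm-U}). You also correctly identify $p>\frac{q}{q-1}$ as exactly what lets $K_T\in L^p_G$ pair with $\sup_r\mathcal{E}(Z)_r\in L^{q'}$ for some $q'<q$.

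Several points need tightening.

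(a) The claim that $\int\eta\,dK$ is a non-increasing $G$-martingale for bounded nonnegative $\eta\in M^1_G$ is not meaningful as stated. Elements of $M^1_G$ are only defined modulo $\hE\int_0^T|\cdot|\,dt$, and $dK$ is not known to be absolutely continuous in general. State it instead for nonnegative $X\in S^\alpha_G(0,T)$ with $K_T\in L^{\alpha/(\alpha-1)}_G$, and approximate $X$ by step processes uniformly in time.

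(b) The obstacle you anticipated disappears. You have $\mathcal{E}(Z)\wedge N=\Phi(\int_0^\cdot Z\,dB,\int_0^\cdot|Z|^2d\langle B\rangle)$ with $\Phi(x,y)=e^{x-y/2}\wedge N$ bounded and globally Lipschitz. Hence $\mathcal{E}(Z)\wedge N\in S^\alpha_G(0,T)$ for every $\alpha$, and no approximation of $Z$ is needed.

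(c) Doob's inequality is unnecessary for $K$, since $\sup_r|K_r-K_t|\le|K_T|$. For $\mathcal{E}(Z)$, Lemma \ref{lem2.4 2.5}(i) at $\tau=0$ only bounds $\sup_{\P}\E^{\P}[\mathcal{E}(Z)_T^q]$; the maximal bound comes from the classical Doob inequality under each $\P$.

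(d) Replacing $\mathcal{E}(Z)_T$ by $\mathcal{E}(Z)_s$ in Step 1, and the tower property of $\widetilde{\E}$ used in Step 3, both rest on the symmetric martingale property of $\mathcal{E}(Z)$ from Lemma \ref{lem2.3}. You should say so.

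(e) Part (i) assumes only $Z\in BMO_G$. You must first pick $q$ close to $1$ with $\|Z\|_{BMO_G}<\phi(q)$, which is possible since $\phi(q)\to\infty$ as $q\downarrow 1$. Only then is $\widetilde{\E}$ defined and (ii) applicable.

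(f) In the multi-time iteration, the intermediate terminal functions $x\mapsto u_{k+1}(t_k,x;x_1,\dots,x_{k-1},x)$ are in general only Lipschitz. So you need the one-step identity for Lipschitz data with $\mathcal{F}_{t_{k-1}}$-measurable parameters. Your mollification step, together with approximating the parameters by simple random variables, covers this.
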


\subsection{One-dimensional quadratic $G$-BSDEs}

In this section, we now recall some basic results about $G$-BSDEs with quadratic generator (see \cite{CT,HLSH,HTW}). Consider the following type of $G$-BSDE
\begin{equation}\label{eq1.1}
	Y_t=\xi+\int_t^T f(s,Y_s,Z_s)ds+\int_t^T g(s,Y_s,Z_s)d\langle B\rangle_s-\int_t^T Z_s dB_s-(K_T-K_t),
\end{equation}
where the generators $f(t,\omega,y,z),  g(t,\omega,y,z):[0,T]\times\Omega_T\times\mathbb{R}\times\mathbb{R}\rightarrow \mathbb{R}$ and the terminal value $\xi$ 
satisfy the following properties:
\begin{description}
	\item[(H1)] $|f(t,\omega,0,0)|+|g(t,\omega,0,0)|+|\xi(\omega)|\leq M_0$, q.s.;
    \item[(H2)] The generators $f,g$ are uniformly continuous in $(t,\omega)$, that is, there exists a non-decreasing continuous function $\rho:\mathbb{R}^+\rightarrow \mathbb{R}^+$ with $\rho(0)=0$, such that 
    \begin{align*}
        &\sup_{y,z\in\mathbb{R}}|f(t,\omega,y,z)-f(t',\omega',y,z)|\leq \rho(|t-t'|+\|\omega-\omega'\|_\infty);\\
        &\sup_{y,z\in\mathbb{R}}|g(t,\omega,y,z)-g(t',\omega',y,z)|\leq \rho(|t-t'|+\|\omega-\omega'\|_\infty);
    \end{align*}
	\item[(H3)] There exist two positive constants $L_y, L_z$ such that for any $(t,\omega)\in[0,T]\times\Omega$,
	\begin{align*}
	    |f(t,\omega,y,z)-f(t,\omega,y',z')|&+|g(t,\omega,y,z)-g(t,\omega,y',z')|\\
        &\leq L_y|y-y'|+L_z(1+|z|+|z'|)|z-z'|.
	\end{align*}		
\end{description}

For any constant $\alpha\geq 1$, we denoted by $\mathfrak{S}_G^{\alpha}(0,T)$ the collection of processes $(Y,Z,K)$ such that $Y\in S_G^{\alpha}(0,T)$, $Z\in H_G^{\alpha}(0,T)$, and $K$ is a non-increasing $G$-martingale with $K_0=0$ and $K_T\in L_G^{\alpha}(\Omega_T)$. Hu, Lin and Soumana Hima \cite{HLSH} established the existence and uniqueness result for Equation \eqref{eq1.1}.

\begin{theorem}[\cite{HLSH}]\label{wellposedness for GBSDE}
	Assume that $\xi\in L_G^{\infty}(\Omega_T)$ and $f,g,\xi$ satisfy (H1)-(H3). Then, $G$-BSDE \eqref{eq1.1} has a unique solution $(Y,Z,K)\in \mathfrak{S}_G^{2}(0,T)$. Moreover, we have
	\begin{displaymath}
		\|Y\|_{S^\infty_G}+\|Z\|_{BMO_G}\leq C(M_0,L_y,L_z)
	\end{displaymath}
    and for any $p\geq 1$,
    \begin{align*}
        \hat{\mathbb{E}}[|K_T|^p]\leq C(p,M_0,L_y,L_z).
    \end{align*}
\end{theorem}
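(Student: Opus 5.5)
We split the statement into uniqueness plus the a priori bounds, and existence. Uniqueness and the bounds come from linearization and $G$-Girsanov; existence comes from regular approximation as in \cite{HLSH}.

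\textbf{Bound on $Y$.} Write $f(s,Y_s,Z_s)=f(s,0,0)+a_sY_s+b_sZ_s$, where $|a_s|\le L_y$ and $|b_s|\le L_z(1+|Z_s|)$, and do the same for $g$. This is legitimate by (H3). We first work under the a priori assumption that $Z\in BMO_G$, so that $b\in BMO_G$. Absorb the drift $b\,d\langle B\rangle$ into the Brownian motion by Lemma \ref{lem2.6 2.7}. Then $\tilde B=B-\int \tilde b\,d\langle B\rangle$ is a $G$-Brownian motion under $\widetilde{\E}$, and $K$ remains a non-increasing $G$-martingale.

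Multiply by the discount factor $e^{\int a}$ and take $\widetilde{\E}_t$. This gives
\[
|Y_t|\le e^{L_yT}M_0(1+2T\bar\sigma^2),
\]
since the $K$-term has the correct sign for the upper bound. For the lower bound we use the comparison with $\xi\ge -M_0$, and the non-increasing $K$ again helps. The BMO-norm restriction $\|b\|_{BMO_G}<\phi(q)$ needed for Lemma \ref{lem2.6 2.7} can be relaxed by splitting $[0,T]$ into finitely many subintervals on which the BMO norm is small.

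\textbf{Bound on $Z$ and $K$.} Apply $G$-Itô's formula to $\exp(\beta Y)$ with $\beta$ large relative to $L_z$ and take $\hE_\tau$. The quadratic growth $|f|+|g|\le M_0+L_y|y|+C(1+|z|^2)$ is dominated by the term $\tfrac{\beta^2}{2}e^{\beta Y}|Z|^2d\langle B\rangle$. The $K$ contribution, $-\int_\tau^T\beta e^{\beta Y}dK$, is nonnegative, so it can be dropped in one direction. Alternatively we use $\phi(Y)=(e^{\beta(Y+c)}-1-\beta(Y+c))/\beta^2$. This yields a bound on $\E^\P_\tau\int_\tau^T|Z|^2d\langle B\rangle$, uniformly in $\P$, in terms of $\|Y\|_{S^\infty_G}$, which gives the $BMO_G$ bound.

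For $K$, write
\[
K_T=Y_0-\xi-\int_0^T f\,ds-\int_0^T g\,d\langle B\rangle+\int_0^T Z\,dB.
\]
Then Lemma \ref{lem2.2} and Proposition \ref{BDG} give $\hE|K_T|^p\le C$.

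\textbf{Uniqueness.} Take two solutions and let $\hat Y=Y^1-Y^2$. Linearize the difference using $|f(z)-f(z')|\le L_z(1+|z|+|z'|)|z-z'|$, which gives a coefficient $\beta\in BMO_G$ by the bound on $Z$. Under the Girsanov-transformed expectation,
\[
\hat Y_t=\widetilde{\E}_t\Big[\int_t^T a\hat Y\,ds+\dots-(K^1_T-K^1_t)+(K^2_T-K^2_t)\Big].
\]
The difficulty is that $K^1-K^2$ has no sign. We handle this as in \cite{HJPS2}. Since $K^2$ is a $\widetilde{\E}$-martingale by Lemma \ref{lem2.6 2.7}(ii) and $-K^1\ge0$, we get $\hat Y\ge\widetilde{\E}_t[\dots]$ after moving the $K^2$ part. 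Combined with Gronwall and symmetric roles this gives $\hat Y\le0$ and $\hat Y\ge0$, hence $Y^1=Y^2$. Then $Z^1=Z^2$ follows from the quadratic variation of $\hat Y$, and $K^1=K^2$ follows from the equation.

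\textbf{Existence.} This is the main obstacle. First treat Markovian/discrete data: take $\xi=\varphi(B_{t_1},\dots,B_{t_n})$ with regular $\varphi$ and generators depending on finitely many $B_{t_i}$, with $z$-growth truncated at level $N$ so that they are Lipschitz. Solve these via \cite{HJPS1} or fully nonlinear PDEs. The truncation-independent bounds above, namely the $S^\infty$ bound on $Y$ and the BMO bound on $Z$, show that the truncation is inactive for large $N$ in the relevant estimates.

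Then approximate general $(\xi,f,g)$, using (H2) uniform continuity, by such data. The stability estimate from the uniqueness argument, run via Girsanov with reverse Hölder (Lemma \ref{lem2.4 2.5}(ii)), gives a Cauchy sequence in $S^2_G\times H^2_G$, with $K$ obtained as the limit. The delicate point is controlling the $K$-difference term in the stability estimate without a sign. We intend to use the $\hE$-sublinearity and the martingale property of $K$ under the changed expectation. Closing $S^2_G$ convergence then requires Theorem \ref{the1.2} to pass from $L^p_G$ to $\hE_t$-sup control.
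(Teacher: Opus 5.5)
The paper gives no proof of this statement; it is quoted from \cite{HLSH}. Your architecture matches the strategy summarized in the introduction: BMO/Girsanov a priori estimates giving uniqueness, discrete solutions obtained from fully nonlinear PDEs, then approximation of general data via (H2).

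\textbf{The gap is in the existence step.} You truncate the $z$-growth at level $N$ and assert that the $S^\infty_G$ bound on $Y$ and the $BMO_G$ bound on $Z$ make the truncation inactive for large $N$. They do not. $\|Z\|_{BMO_G}$ controls only the conditional energies $\E^{\P}_\tau[\int_\tau^T|Z_s|^2d\langle B\rangle_s]$, not $|Z|$ pointwise. So the truncated generator still differs from $g$ on $\{|Z^N|>N\}$, and $(Y^N,Z^N,K^N)$ need not solve \eqref{eq1.1}. Passing to the limit $N\to\infty$ instead would require $\hE[\int_0^T|Z^N_s|^2\mathbf{1}_{\{|Z^N_s|>N\}}d\langle B\rangle_s]\to 0$ uniformly. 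That does not follow from uniform bounds on $\hE[(\int_0^T|Z^N_s|^2d\langle B\rangle_s)^p]$, and under $\hE$ there is no Kobylanski-type monotone stability to fall back on.

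What is missing is an $N$-independent bound on $|Z^N|$ itself for the discrete data. On each subinterval, the Markovian value function $u_N$ must be shown Lipschitz in $x$ with a constant independent of $N$. One way is to apply your Girsanov/BMO stability estimate to two initial points, using the Lipschitz dependence of $\varphi$ and of the generators on $x$. You also need enough classical regularity of $u_N$ to identify $Z^N_t=\partial_x u_N(t,B_t)$; this is available because $\underline{\sigma}>0$ makes the equation uniformly parabolic. Only then is $|Z^N|\le C$ with $C$ independent of $N$, so the truncation is genuinely inactive for $N>C$. This is exactly what the PDE step contributes, and ``solve via \cite{HJPS1}'' does not provide it.

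\textbf{Smaller points.}
\begin{itemize}
\item Your $Y$-bound presupposes $Z\in BMO_G$, while your $Z$-bound is in terms of $\|Y\|_{S^\infty_G}$. As written, uniqueness is therefore proved only among solutions already known to have $Y\in S^\infty_G$ (equivalently $Z\in BMO_G$), and you should state that restriction explicitly.
\item In the $Y$-bound the sign is the wrong way round. Since $-(K_T-K_t)\ge 0$, dropping the $K$-term gives the lower bound. The upper bound requires $K$ to be a $\widetilde{\E}$-martingale (Lemma \ref{lem2.6 2.7}(ii)), i.e.\ integrability of $K_T$ matched to $\|b\|_{BMO_G}$ through $\phi(q)$. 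Subdividing $[0,T]$ is not a valid substitute, since BMO norms need not become small on short intervals.
\item Lemma \ref{lem2.6 2.7} only absorbs $d\langle B\rangle$-drifts. The $b_sZ_s\,ds$ term produced by $f$ needs an additional device, for instance the auxiliary extended $G$-expectation space of \cite{HJPS2}.
\end{itemize}

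Your uniqueness step is sound when read as a two-sided comparison. Your displayed identity gives
$\hat Y_t\ge\widetilde{\E}_t[X+(K^2_T-K^2_t)]\ge-\widetilde{\E}_t[-X]$,
using monotonicity for the $-K^1$ term and subadditivity plus the $\widetilde{\E}$-martingale property for $K^2$. The reverse bound follows by exchanging the roles of the two solutions, with the corresponding linearization. The same mechanism yields the stability estimate needed in the approximation step, so the ``delicate point'' you flag there is already handled.
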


Similar with the Lipschitz case, the comparison theorem for  quadratic $G$-BSDEs still holds.
\begin{theorem}[\cite{CT}]\label{comparison theorem for GBSDE}
	Let the triplet $(\xi^l,f^l,g^l)$, $l=1,2$, satisfy (H1)-(H3). Let $(Y^l,Z^l,K^l)\in\mathfrak{S}^2_G(0,T)$, $l=1,2$, be the solution to the following $G$-BSDE:
	\begin{displaymath}
		Y^l_t=\xi^l+\int_t^T f^l(s,Y^l_s,Z^l_s)ds+\int_t^T g^l(s,Y^l_s,Z^l_s)d\langle B\rangle_s+V_T^l-V_t^l-\int_t^T Z^l_s dB_s-(K^l_T-K^l_t),
	\end{displaymath}
	where process $\{V_t^l\}_{0\leq t\leq T}$ is continuous with finite variation. Assume that 
    \begin{align*}
        (Y^l,Z^l,K^l_T,V^l)\in S^\infty_G(0,T)\times BMO_G\times\bigcap_{p\geq 1}L^p_G(\Omega_T)\times \bigcap_{p\geq 1}S^p_G(0,T),
    \end{align*}
    and $K^l$ is a non-increasing $G$-martingale. 
    If $\xi^1\geq \xi^2$, $f^1\geq f^2$, $g^1\geq g^2$, q.s. and $V^1-V^2$ is a non-decreasing process, then we have $Y_t^1\geq Y_t^2$, q.s. for any $t\in[0,T]$.
\end{theorem}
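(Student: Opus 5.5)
The plan is to follow the classical linearization argument, adapted to the $G$-setting through the $G$-BMO and $G$-Girsanov tools of the previous subsection. Set $\hat Y=Y^1-Y^2$, $\hat Z=Z^1-Z^2$, $\hat K=K^1-K^2$ and $\hat V=V^1-V^2$. Split the generator differences as
\begin{align*}
f^1(s,Y^1_s,Z^1_s)-f^2(s,Y^2_s,Z^2_s)=a_s\hat Y_s+b_s\hat Z_s+\hat f_s ,
\end{align*}
where $\hat f_s:=f^1(s,Y^2_s,Z^2_s)-f^2(s,Y^2_s,Z^2_s)\geq 0$. The coefficients $a,b$ are the usual difference quotients, set to zero when the denominator vanishes. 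By (H3), $|a|\le L_y$ and $|b_s|\le L_z(1+|Z^1_s|+|Z^2_s|)$. The $g$-part is decomposed in the same way into $c\hat Y+d\hat Z+\hat g$ with $\hat g\ge0$. Since $d\langle B\rangle_s\le \bar\sigma^2 ds$ and $\ge\underline\sigma^2ds$, I will absorb the $ds$ drift in $z$ into the $d\langle B\rangle$ term. Concretely, I write $b_s\,ds=(b_s/\sigma_s^2)\,d\langle B\rangle_s$ with $\sigma^2_s:=d\langle B\rangle_s/ds\in[\underline\sigma^2,\bar\sigma^2]$, which is legitimate q.s. Alternatively, I may assume (as in \cite{HLSH}) that the $z$-dependence appears only through $g$, or treat $f$ by the same trick. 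This yields a process $\beta:=d+b/\sigma^2$ with $|\beta|\le C(1+|Z^1|+|Z^2|)$. Since $Z^l\in BMO_G$, we get $\beta\in BMO_G$.

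Next I remove the linear $y$-terms with the factor $\Gamma_t=\exp(\int_0^t a_sds+\int_0^t c_sd\langle B\rangle_s)$, which is bounded above and below. Then I apply $G$-Itô to $\Gamma_t\hat Y_t$. The resulting equation has the form
\begin{align*}
\Gamma_t\hat Y_t=\Gamma_T\hat\xi+\int_t^T\Gamma_s(\hat f_sds+\hat g_sd\langle B\rangle_s+d\hat V_s)-\int_t^T\Gamma_s\hat Z_s(dB_s-\beta_sd\langle B\rangle_s)-\int_t^T\Gamma_s d(K^1_s-K^2_s).
\end{align*}
Under the Girsanov expectation $\widetilde{\E}$ built from $\mathcal E(\beta)$, Lemma \ref{lem2.6 2.7}(i) makes $\tilde B=B-\int\beta d\langle B\rangle$ a $G$-Brownian motion. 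By Lemma \ref{lem2.6 2.7}(ii), $K^2$ remains a non-increasing $G$-martingale.

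The difficulty is twofold. First, Lemma \ref{lem2.4 2.5} requires a small BMO norm. Second, $\widetilde\E$ is defined via $\hE$ under the reference $G$-structure, so the integrals must be integrable. For the first issue, I would split $[0,T]$ into finitely many subintervals on which the BMO norm of $\beta$ is below $\phi(q)$ and argue backward. Small BMO norm on short stochastic intervals is standard, but here I expect I may need the fact that $\mathcal E(\beta)$ is a $G$-martingale and satisfies reverse Hölder for some $q$ without smallness. The integrability needed for $p>q/(q-1)$ follows since $K^l_T,V^l$ are in every $L^p_G$.

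Now take $\widetilde\E_t$ of both sides. The term $-\int\Gamma dK^1$ is nonnegative because $K^1$ is non-increasing and $\Gamma>0$. The term $+\int\Gamma dK^2$ is handled by the key property that $\widetilde\E_t[\int_t^T\Gamma_sdK^2_s]=0$. This holds because $\int\Gamma dK^2$ is again a non-increasing $G$-martingale under $\widetilde\E$ when $\Gamma$ is positive, which is an analogue of \cite[Lemma 3.4]{HJPS2}. The remaining terms are nonnegative: $\hat\xi\ge0$, $\hat f,\hat g\ge0$, and $\hat V$ is non-decreasing. Using sublinearity of $\widetilde\E_t$ in the form $\widetilde\E_t[X-Y]\ge \widetilde\E_t[X]-\widetilde\E_t[Y]$, together with the fact that the $d\tilde B$ stochastic integral is a symmetric $\widetilde\E$-martingale with zero conditional expectation, we obtain $\Gamma_t\hat Y_t\ge0$ and hence $Y^1_t\ge Y^2_t$ q.s.

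The main obstacle is justifying the Girsanov change for a generator $\beta$ that is only in $BMO_G$ without a smallness bound. I would first try localizing in time so that the conditional BMO norm on each piece is small, and then chain the comparisons backward from $T$.
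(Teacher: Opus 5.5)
The paper quotes this result from \cite{CT} without proof, so I compare your proposal with the standard argument and with the linearizations the paper carries out itself. Your architecture is the right one: linearize, absorb the $y$-terms into $\Gamma$, change expectation with a $G$-BMO generator, use $-\int\Gamma\,dK^1\ge0$ and $\widetilde{\E}_t[\int_t^T\Gamma_s\,dK^2_s]=0$, and conclude by sublinearity. Your sign bookkeeping is also correct.

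There is, however, a genuine gap at the first step: the coefficients you build are not admissible in the $G$-calculus. Difference quotients set to zero where the denominator vanishes are discontinuous across $\{Z^1=Z^2\}$ (resp.\ $\{Y^1=Y^2\}$), so $a,b,c,d$ need not lie in $M^2_G(0,T)$. This has several consequences:
\begin{itemize}
\item $\beta\notin H^2_G(0,T)$ in general, hence $\beta\notin BMO_G$, and $\int\beta\,dB$ is not a $G$-It\^o integral.
\item $\mathcal{E}(\beta)_T$ need not be in $L^1_G(\Omega_T)$, so $\widetilde{\E}_t$ is not even defined, and Lemmas \ref{lem2.3}--\ref{lem2.6 2.7} cannot be invoked.
\item Lemma 3.4 of \cite{HJPS1}, which you need for $\int\Gamma\,dK^2$, requires the integrand in some $S^\alpha_G$.
\end{itemize}
The missing idea is the $\varepsilon$-regularization of Lemma 3.6 in \cite{HLSH}, used in the proofs of Proposition \ref{est-Y1-Y2} and Lemma \ref{Ynm-U}. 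You multiply the quotients by $1-l(\hat Y_s)$ and $1-l(\hat Z_s)$ for a Lipschitz cut-off $I_{[-\varepsilon,\varepsilon]}\le l\le I_{[-2\varepsilon,2\varepsilon]}$. This makes the coefficients admissible at the price of a residual $m^\varepsilon$ with $|m^\varepsilon_s|\le C\varepsilon(1+|Z^1_s|)$. Your argument then gives $\Gamma^\varepsilon_t\hat Y_t\ge-\widetilde{\E}^\varepsilon_t[\int_t^T\Gamma^\varepsilon_s|m^\varepsilon_s|\,d\langle B\rangle_s]\ge-C\varepsilon$. The constant is uniform in $\varepsilon$ because $|\beta^\varepsilon|\le C(1+|Z^1|+|Z^2|)$ gives uniform BMO and reverse H\"older bounds, and Lemma \ref{lem2.2} controls $\int|Z^1|\,d\langle B\rangle$. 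Then let $\varepsilon\to0$.

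Your conversion $b_s\,ds=(b_s/\sigma^2_s)\,d\langle B\rangle_s$ fails for the same reason, and no cut-off repairs it. The density $\sigma^2_s=d\langle B\rangle_s/ds$ is not quasi-continuous. If $\sigma^{-2}$ were in $M^1_G(0,T)$, then $\P\mapsto\E^\P[\int_0^T\sigma_s^{-2}ds]$ would be weakly continuous on $\mathcal{P}$. This fails along deterministic volatilities oscillating ever faster between $\underline\sigma$ and $\bar\sigma$: their laws converge to constant volatility $((\underline\sigma^2+\bar\sigma^2)/2)^{1/2}$, yet the expectation stays at $\frac{T}{2}(\underline\sigma^{-2}+\bar\sigma^{-2})>2T/(\underline\sigma^2+\bar\sigma^2)$. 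So already $f(z)=z$ gives $\beta\notin H^2_G$. You must either restrict to $f$ independent of $z$ (as the paper does by taking $f\equiv0$ in Section 3.2), or realize $ds$ as $d\langle B,\tilde B\rangle$ with the auxiliary $\tilde G$-Brownian motion of \cite{HJPS2}.

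Conversely, the obstacle you single out is not one, and your remedy for it would not work. Since $\phi(q)\to\infty$ as $q\downarrow1$, every $\beta\in BMO_G$ satisfies $\|\beta\|_{BMO_G}<\phi(q)$ for some $q>1$ close to $1$. Lemma \ref{lem2.4 2.5}(i) then gives reverse H\"older with no smallness assumption. Lemma \ref{lem2.6 2.7}(ii) only needs $K^2_t\in L^p_G$ for some $p>q/(q-1)$, which holds since $K^l_T\in\bigcap_{p\ge1}L^p_G(\Omega_T)$. This is exactly how $p$ is chosen in the proof of Lemma \ref{Yn-L} and in Step 3 of the proof of Theorem \ref{main1}.

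Localizing in time, by contrast, fails in general. The BMO norm of $\beta\mathbf{1}_{[t_i,t_{i+1}]}$ need not become small as the mesh of a deterministic partition shrinks; that is a VMO-type property which general BMO martingales lack. Partitions by stopping times, for their part, have an unbounded number of pieces, so there is nothing finite to chain backward along.
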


\begin{remark}
    Actually, as claimed in \cite{CT}, Theorem \ref{wellposedness for GBSDE} and Theorem \ref{comparison theorem for GBSDE} still hold if (H1) is replaced by the following condition
    \begin{itemize}
        \item[(H1')] $\int_0^T|f(t,\omega,0,0)|^2dt+\int_0^T|g(t,\omega,0,0)|^2dt+|\xi(\omega)|\leq M_0$, q.s.
    \end{itemize}
\end{remark}




\subsection{Quadratic reflected $G$-BSDEs with a lower obstacle}

Now we introduce the quadratic reflected $G$-BSDEs with a lower obstacle studied in \cite{CT}. Compared with the $G$-BSDEs, the parameters consist of a terminal value $\xi$, generators $f,g$ and an obstacle $S$, where $S$ satisfies the following assumption. 
\begin{description}
	\item[(H4)] $S\in \cap_{p>1}S^p_G(0,T)$. Moreover, there exists a positive constant $N_0$ such that $S_t\leq N_0$, q.s. for any $t\in[0,T]$.
    \item[(H5)] $S$ is uniformly continuous in $(t,\omega)$, that is, there exists a non-decreasing continuous function $\rho:\mathbb{R}^+\rightarrow \mathbb{R}^+$ with $\rho(0)=0$, such that 
    \begin{align*}
        |S_t(\omega)-S_{t'}(\omega')|\leq \rho(|t-t'|+\|\omega-\omega'\|_\infty).
    \end{align*}  
\end{description}

Let us now introduce the reflected $G$-BSDE with a lower obstacle. A triple of processes $(Y,Z,A)$ is called a solution of reflected $G$-BSDE with a lower obstacle with parameters $(\xi,f,g,S)$ if:
\begin{description}
	\item[(a)]$(Y,Z,A)\in\mathcal{S}_G^{\alpha}(0,T)$ and $Y_t\geq S_t$, $0\leq t\leq T$;
	\item[(b)]$Y_t=\xi+\int_t^T f(s,Y_s,Z_s)ds+\int_t^T g(s,Y_s,Z_s)d\langle B\rangle_s
	-\int_t^T Z_s dB_s+(A_T-A_t)$;
	\item[(c)]$\{-\int_0^t (Y_s-S_s)dA_s\}_{t\in[0,T]}$ is a non-increasing $G$-martingale.
\end{description}
Here, $\mathcal{S}_G^{\alpha}(0,T)$ is the collection of processes $(Y,Z,A)$ such that $Y\in S_G^{\alpha}(0,T)$, $Z\in H_G^{\alpha}(0,T)$, $A$ is a continuous non-decreasing process with $A_0=0$ and $A\in S_G^\alpha(0,T)$. By the results in \cite{CT}, we have the following existence and uniqueness result as well as the comparison theorem  for quadratic reflected $G$-BSDEs.

\begin{theorem}[\cite{CT}]\label{wellposedness for RGBSDE}
	Let the quadruple $(\xi,f,g,S)$ satisfy (H1)-(H5)  with $S_T\leq \xi$, q.s. Then, the reflected $G$-BSDE with parameters $(\xi,f,g,S)$ has a unique solution $(Y,Z,A)$ such that $Y\in S^\infty_G(0,T)$, $Z\in BMO_G$ and $A\in \cap_{p\geq 2}S^p_G(0,T)$.
\end{theorem}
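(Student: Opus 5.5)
Theorem \ref{wellposedness for RGBSDE} is quoted from \cite{CT}. The plan is to reproduce their argument: penalization for existence, and $G$-BMO/Girsanov linearization for uniqueness.

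\textbf{Penalized equations and uniform bounds.} For $n\geq 1$, consider the quadratic $G$-BSDE
\begin{equation*}
Y^n_t=\xi+\int_t^T f(s,Y^n_s,Z^n_s)ds+\int_t^T g(s,Y^n_s,Z^n_s)d\langle B\rangle_s+n\int_t^T (Y^n_s-S_s)^-ds-\int_t^T Z^n_sdB_s-(K^n_T-K^n_t).
\end{equation*}
The generator $f+n(y-S)^-$ is still Lipschitz in $y$ and satisfies (H1') and (H2), since $S$ is bounded above by (H4) and uniformly continuous by (H5). Theorem \ref{wellposedness for GBSDE} therefore gives a solution with $Y^n\in S^\infty_G$ and $Z^n\in BMO_G$.

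By Theorem \ref{comparison theorem for GBSDE}, $Y^n$ is non-decreasing in $n$. It is bounded below by the solution with $n=0$, and bounded above by the solution of the $G$-BSDE with terminal value $\xi\vee N_0$ and generator $f+L_y(y-N_0)^-$ type bounds. Concretely, I would compare with the $G$-BSDE whose solution dominates $N_0$, so the penalty term does not act on it. This gives $\sup_n\|Y^n\|_{S^\infty_G}<\infty$.

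Next I apply $G$-Itô's formula to $\exp(\beta Y^n)$ (or $\phi(Y^n)$ with $\phi''-2L_z\phi'\geq$ const) on $[\tau,T]$ under each $\P$. This yields a uniform bound $\sup_n\|Z^n\|_{BMO_G}<\infty$, and then, with Lemma \ref{lem2.2}, $\sup_n\hE[|A^n_T|^p+|K^n_T|^p]<\infty$, where $A^n_t=n\int_0^t(Y^n_s-S_s)^-ds$.

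\textbf{Convergence.} The key step, and the main obstacle, is to show $\hE[\sup_t|(Y^n_t-S_t)^-|^p]\to0$. The monotone limit alone is not quasi-surely uniform, so this does not come for free.

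I would follow the Lipschitz argument of \cite{LPSH}. First approximate $S$ by generalized $G$-Itô processes $S^\varepsilon$, using (H5) and a smoothing in $(t,\omega)$. For such obstacles, compare $Y^n$ with the solution of the penalized equation in which $(Y-S^\varepsilon)^-$ is controlled explicitly, via $G$-Itô's formula for $((S^\varepsilon-Y^n)^+)^2$. The quadratic $z$-terms are linearized as $\gamma^n_s(Z^n_s-\cdot)$ with $|\gamma^n|\leq L_z(1+|Z^n|+|\cdot|)$, so $\gamma^n\in BMO_G$ uniformly. Lemmas \ref{lem2.4 2.5}–\ref{lem2.6 2.7} then allow changing to $\widetilde{\E}$, under which the stochastic integral and $K$ terms vanish. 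Reverse Hölder (Lemma \ref{lem2.4 2.5}(ii)) transfers the estimate back to $\hE$, giving a rate of order $n^{-1/2}$ plus $\rho$-terms in $\varepsilon$.

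With this in hand, I apply $G$-Itô to $|Y^n-Y^m|^2$, again linearize, and use the BMO bounds and Theorem \ref{the1.2}. This shows that $(Y^n)$ is Cauchy in $S^p_G$ and $(Z^n)$ is Cauchy in $H^p_G$. The process $A^n-K^n$ converges, and I define $A$ as its limit.

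\textbf{Identifying the limit.} To show $A$ is non-decreasing, argue as in \cite{LPSH}: the limit of $K^n$ is absorbed, using the fact that $-\int(Y^n-S)dA^n\leq0$ while $\int (Y^n-S)dK^n$ converges to a $G$-martingale. Condition (c) follows because $-\int_0^\cdot(Y^n_s-S_s)dA^n_s+\int_0^\cdot (Y^n_s-S_s)^{\ldots}dK^n_s$ converges to a limit that is a non-increasing $G$-martingale.

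\textbf{Uniqueness.} Take two solutions and linearize the difference $Y^1-Y^2$ with a BMO coefficient. Changing measure to $\widetilde{\E}$ via Lemma \ref{lem2.6 2.7}, the martingale condition (c), together with the fact that $\int (Y^1-Y^2)d(A^1-A^2)\leq$ a non-increasing $G$-martingale, gives $Y^1\leq Y^2$ and, symmetrically, $Y^1\geq Y^2$. Then $Z$ and $A$ coincide by uniqueness of the $G$-Itô decomposition.
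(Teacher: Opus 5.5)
Your overall architecture matches the one behind the result of \cite{CT}: penalization, monotonicity in $n$, uniform $S^\infty_G$/BMO/moment bounds, a Girsanov-linearized Cauchy estimate, and BMO--Girsanov uniqueness. However, the step you yourself call the main obstacle, $\hE[\sup_t|(Y^n_t-S_t)^-|^\alpha]\to0$, is not established by your sketch.

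\textbf{The key convergence step.} Nothing in (H4)--(H5) provides generalized $G$-It\^o approximations $S^\varepsilon$ of a merely uniformly continuous path functional. A cylinder-plus-mollification construction gives at best closeness in $S^p_G(0,T)$, not a quasi-sure bound of the form $\rho(\varepsilon)$. This breaks your plan in two ways.
\begin{itemize}
\item Without $S^\varepsilon\le S$ q.s.\ (up to a constant), you cannot compare $Y^n$ with a penalized equation for $S^\varepsilon$.
\item Suppose instead you apply It\^o's formula to $((S^\varepsilon-Y^n)^+)^2$ and linearize $g$ around $\sigma^\varepsilon$, as your bound $|\gamma^n|\le L_z(1+|Z^n|+|\cdot|)$ suggests. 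Then $\|\gamma^n\|_{BMO_G}$ depends on $\|\sigma^\varepsilon\|_{S^\infty_G}$, and so do the exponents in Lemma \ref{lem2.4 2.5} needed to pass between $\widetilde{\E}$ and $\hE$. In general $\|\sigma^\varepsilon\|_{S^\infty_G}$ blows up as $\varepsilon\to0$, since the obstacle need not be a semimartingale. The error $S-S^\varepsilon$ is then measured in $L^{p_\varepsilon}$ with $p_\varepsilon\to\infty$, so sending $\varepsilon\to0$ after $n\to\infty$ is unjustified.
\end{itemize}

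\textbf{The missing idea.} No regularization of the obstacle is needed. Take $f\equiv0$ and write
$g(s,Y^n_s,Z^n_s)=g(s,0,0)+m^{n,\varepsilon}_s+a^{n,\varepsilon}_sY^n_s+b^{n,\varepsilon}_sZ^n_s$ with $|b^{n,\varepsilon}|\le L_z(1+|Z^n|)$, which is BMO uniformly in $n$. Under $\hE^{n,\varepsilon}$, compare $Y^n$ with the explicitly solvable linear equation whose penalty is $n(S_s-y_s)$; this uses $n(y-S)^-\ge n(S-y)$ and Lemma \ref{lem2.6 2.7}(ii) for $K^n$. Since $\xi\ge S_T$, this gives
\[
(Y^n_t-S_t)^-\le \hE^{n,\varepsilon}_t\Big[|\widetilde{S}^n_t|+\Big|\int_t^T e^{n(t-s)}\big(g(s,0,0)+m^{n,\varepsilon}_s+a^{n,\varepsilon}_sY^n_s\big)d\langle B\rangle_s\Big|\Big],
\]
where $\widetilde{S}^n_t=e^{n(t-T)}(S_T-S_t)+\int_t^T ne^{n(t-s)}(S_s-S_t)ds$. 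The regularity of $S$ enters only through $\hE[\sup_t\hE_t[|\widetilde{S}^n_t|^\alpha]]\to0$ (Lemma 4.2 of \cite{LS}). H\"older's inequality, Theorem \ref{the1.2} and reverse H\"older with $n$-uniform constants then finish the argument. This is exactly the argument of Lemma \ref{Yn-L} with $U\equiv+\infty$; \cite{CT} instead exploits the monotonicity of $Y^n$.

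\textbf{Two sign issues.}
\begin{itemize}
\item For the uniform BMO bound the test function must be decreasing, e.g.\ $e^{-4L_zy}$ as in Proposition \ref{est-Z-A}, because the compensator $A^n-K^n$ is non-decreasing. For an increasing $\phi$, which is what $\phi''-2L_z\phi'\ge c$ points to, the term $\int\phi'(Y^n_s)\,d(A^n_s-K^n_s)$ has the wrong sign.
\item In the uniqueness step, $\int(Y^1-Y^2)d(A^1-A^2)$ is not bounded by a non-increasing $G$-martingale. What (c) gives is
$\int_t^T(Y^1_s-Y^2_s)^+d(A^1_s-A^2_s)\le\int_t^T(Y^1_s-S_s)dA^1_s=-(M^1_T-M^1_t)$, with $M^1=-\int_0^\cdot(Y^1_s-S_s)dA^1_s$. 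This term is non-negative, so it must be moved to the left-hand side and absorbed together with the $d\widetilde{B}$-integral. That is legitimate because a symmetric $G$-martingale plus a single non-increasing $G$-martingale is a $G$-martingale under $\widetilde{\E}$. The argument therefore has to be run separately on $((Y^1-Y^2)^+)^2$ and $((Y^1-Y^2)^-)^2$. A two-sided estimate produces $M^1+M^2$, and a sum of two non-increasing $G$-martingales need not be a $G$-martingale; for example, $\langle B\rangle_t-\overline{\sigma}^2t$ and $\underline{\sigma}^2t-\langle B\rangle_t$ sum to a strictly decreasing deterministic function.
\end{itemize}
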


\begin{theorem}[\cite{CT}]\label{comparison theorem for RGBSDE}
	Let $(\xi^i,f^i,g^i,S^i)$  be two sets of parameters satisfying (H1)-(H5)  with $S^i_T\leq \xi^i$, q.s., $i=1,2$. Let  $(Y^i,Z^i,A^i)\in\mathcal{S}^2_G(0,T)$ be the solution to the reflected $G$-BSDE with parameters $(\xi^i,f^i,g^i,S^i)$,  $i=1,2$. Assume that $Y^i\in S^\infty(0,T)$, $Z^i\in BMO_G$ and $A^i_T\in \cap_{p\geq 2}L^p_G(\Omega_T)$ for $i=1,2$. If $\xi^1\geq \xi^2$, $f^1\geq f^2$, $g^1\geq g^2$ and $S^1\geq S^2$, q.s., then
	\begin{displaymath}
		Y_t^1\leq Y^2_t,  \quad \textrm{q.s.}, \quad 0\leq t\leq T.
	\end{displaymath}
\end{theorem}

\section{Doubly reflected $G$-BSDE with quadratic generator} \label{sec:Doubly reflected $G$-BSDE with quadratic generator}

Now we give the definition of solutions to doubly reflected $G$-BSDEs when the generators has quadratic growth in $z$. A triple of processes $(Y,Z,A)$ with $Y\in S_G^\infty(0,T)$, $Z\in BMO_G$ is called a solution to the doubly reflected $G$-BSDE with the parameters $(\xi, f, g, L, U)$  if the following properties hold:
\begin{description}
	\item[(S1)] $L_t\leq Y_t\leq U_t$, $t\in[0,T]$;
	\item[(S2)] $Y_t=\xi+\int_t^T f(s,Y_s,Z_s)ds+\int_t^T g(s,Y_s,Z_s)d\langle B\rangle_s-\int_t^T Z_s dB_s+(A_T-A_t)$;	
	\item[(S3)] $(Y, A)$ satisfies  Approximate Skorohod Condition (AMC for short).
\end{description}

\noindent \textbf{Condition} (AMC): We say a pair of processes $(Y,  A)\in S^\infty_G(0,T)\times \cap_{\alpha\geq 2}S^\alpha_G(0,T)$  satisfies the approximate Skorohod condition (w.r.t. the obstacles $L, U$) if  there exist non-decreasing processes $\{A^{n,+}\}_{n\in\mathbb{N}}$, $\{A^{n,-}\}_{n\in\mathbb{N}}$ and  non-increasing $G$-martingales $\{K^n\}_{n\in\mathbb{N}}$, such that for any $\alpha\geq 2$
\begin{itemize}
	\item $\hat{\mathbb{E}}\left[|A_T^{n,+}|^\alpha+|K^n_T|^\alpha\right]\leq C_\alpha$ and $\|A^{n,-}\|_{L^\infty_G}\leq C$, where $C_\alpha$  and $C$ are independent of $n$;
	\item $\hat{\mathbb{E}}\left[\sup\limits_{t\in[0,T]}|A_t-(A_t^{n,+}-A_t^{n,-}-K_t^n)|^\alpha\right]\rightarrow 0$, as $n\rightarrow\infty$;
	\item  $\lim\limits_{n\rightarrow\infty}\hat{\mathbb{E}}\left[\left|\int_0^T (Y_s-L_s)d A_s^{n,+}\right|^{\alpha/2}\right]=0$;
	\item  $\lim\limits_{n\rightarrow\infty}\hat{\mathbb{E}}\left[\left|\int_0^T (U_s-Y_s)d A_s^{n,-}\right|^{\alpha/2}\right]=0$.
\end{itemize}
We call $\{A^{n,+}\}_{n\in\mathbb{N}}$, $\{A^{n,-}\}_{n\in\mathbb{N}}$ and $\{K^n\}_{n\in\mathbb{N}}$ the approximate sequences for $(Y,A)$  w.r.t. the obstacles $L,U$.

\begin{assumption}\label{assLU}
\begin{itemize}
    \item[(1)] $L,U\in \cap_{p>1}S^p_G(0,T)$, $L_T,U_T\in L^\infty_G(\Omega_T)$ and $L_T\leq \xi\leq U_T$. Moreover, there exists a constant $N_0$ such that $L_t\leq N_0$ and $-U_t\leq N_0$  q.s. for any $t\in[0,T]$.
    \item[(2)] $L,U$ are uniformly continuous in $(t,\omega)$, that is, there exists a non-decreasing continuous function $\rho:\mathbb{R}^+\rightarrow \mathbb{R}^+$ with $\rho(0)=0$, such that 
    \begin{align*}
        |L_t(\omega)-L_{t'}(\omega')|+|U_t(\omega)-U_{t'}(\omega')|\leq \rho(|t-t'|+\|\omega-\omega'\|_\infty).
    \end{align*}  
    \item[(3)] There exists some $I\in S_G^\infty(0,T)$ satisfying the following representation
	\begin{displaymath}
		I_t=I_0+A^{I,-}_t-A_t^{I,+}+\int_0^t \sigma^I(s)dB_s,
	\end{displaymath}
	where $\sigma^I\in S^\infty_G(0,T)$, $A^{I,+}$, $A^{I,-}\in \cap_{p\geq1}S^p_G(0,T)$ are uniformly continuous in $(t,\omega)$ and $A^{I,+}$, $A^{I,-}$ are two non-decreasing processes with $A^{I,+}_0=A^{I,-}_0=0$ and $A^{I,+}_T,A^{I,-}_T\in L^\infty_G(\Omega_T)$ such that $L_t\leq I_t\leq U_t$.
    \item[(4)] $U+A^{I,+}$ has the following representation 
	\begin{equation}\label{A3}
		U_t+A^{I,+}_t=U_0+\int_0^t b(s)ds+\int_0^t \sigma(s)dB_s+K^u_t, %
	\end{equation}
	where $b,\sigma\in S_G^\infty(0,T)$, and $K^u\in \cap _{p\geq 1}S_G^p(0,T)$ is a non-increasing $G$-martingale with $K^u_0=0$.
\end{itemize}    
\end{assumption}

The following is the main result in this section.

\begin{theorem}\label{main1}
	Suppose that $\xi$, $f$, $g$ satisfy (H1)-(H3) and $L$, $U$ satisfy Assumption \ref{assLU}. Then the doubly reflected $G$-BSDE with data $(\xi,f,g,L,U)$ has a unique solution $(Y,Z,A)\in S^\infty_G(0,T)\times BMO_G\times \cap_{\alpha\geq 2} S^\alpha_G(0,T)$. Moreover, for any $\alpha\ge 2$, we have
	\begin{align*}
		\lim_{n\rightarrow \infty}\hat{\mathbb{E}}\left[\sup_{t\in[0,T]}|Y_t-\bar{Y}^n_t|^\alpha\right]=0, 
	\end{align*}
	where for each $n\in\mathbb{N}$, $(\bar{Y}^n,\bar{Z}^n,\bar{A}^n)\in S^\infty_G(0,T)\times BMO_G\times \cap_{\alpha\geq 2} S^\alpha_G(0,T)$ is the solution to the following reflected $G$-BSDE:
	\begin{equation}\label{penalized RGBSDE}
		\begin{cases}
			\bar{Y}^n_t=\xi+\int_t^T f(s,\bar{Y}^n_s,\bar{Z}^n_s)ds+\int_t^T g(s,\bar{Y}^n_s,\bar{Z}^n_s)d\langle B\rangle_s\\
			\ \ \ \ \ \ \ \ -n\int_t^T(\bar{Y}^n_s-U_s)^+ds-\int_t^T \bar{Z}^n_sdB_s+(\bar{A}^n_T-\bar{A}^n_t), \\
			\bar{Y}^n_t\geq L_t, \forall t\in[0,T], \{-\int_0^t (\bar{Y}^n_s-L_s)d\bar{A}^n_s\}_{t\in[0,T]} \textrm{ is a non-increasing $G$-martingale}.
		\end{cases}
	\end{equation}
\end{theorem}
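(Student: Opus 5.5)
We prove uniqueness first, then existence via a double penalization, and finally the approximation by $\bar Y^n$.

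\emph{Uniqueness.} Let $(Y^i,Z^i,A^i)$, $i=1,2$, be two solutions, with approximate sequences $A^{i,n,\pm}$ and $K^{i,n}$. Set $\hat Y=Y^1-Y^2$. Linearize the generators in the standard quadratic way:
\[
f(s,Y^1_s,Z^1_s)-f(s,Y^2_s,Z^2_s)=a_s\hat Y_s+b_s\hat Z_s,\qquad |b_s|\le L_z(1+|Z^1_s|+|Z^2_s|),
\]
and similarly for $g$. Since $Z^i\in BMO_G$, we have $b\in BMO_G$. After rescaling by a small constant, or working on short time intervals where the BMO norm is below $\phi(q)$, Lemma \ref{lem2.4 2.5} and Lemma \ref{lem2.6 2.7} give a Girsanov expectation $\widetilde{\E}$ under which $B-\int b\,d\langle B\rangle$ is a $G$-Brownian motion and the non-increasing $G$-martingales $K^{i,n}$ remain non-increasing $G$-martingales. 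Apply the $G$-Itô formula to $|\hat Y_t|^2e^{\beta t}$ and replace $A^i$ by $A^{i,n,+}-A^{i,n,-}-K^{i,n}$. The cross terms are handled as follows:
\[
\int\hat Y\,dA^{1,n,+}\le\int (Y^1-L)\,dA^{1,n,+},
\]
using $Y^2\ge L$; the other $\pm$ terms are treated symmetrically with $U$. By (AMC) these terms vanish in $L^{\alpha/2}$. The $K$-terms are controlled via Hölder's inequality and the uniform moment bounds on $K^n$, which are needed for every $\alpha$ because of the reverse Hölder exponents in Lemma \ref{lem2.4 2.5}. Letting $n\to\infty$ gives $\hat Y=0$. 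Then $Z^1=Z^2$ follows from $\langle \hat Y\rangle$, and $A^1=A^2$ from (S2).

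\emph{Existence.} Consider the $G$-BSDEs for $(Y^{n,m},Z^{n,m},K^{n,m})$ with penalizations $m(Y-L)^-$ and $n(Y-U)^+$. These are well posed by Theorem \ref{wellposedness for GBSDE} (H1'), since the added terms are Lipschitz in $y$. The argument proceeds in four steps.

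\begin{enumerate}
\item[(i)] Obtain a uniform $S^\infty_G$ bound on $Y^{n,m}$ by Theorem \ref{comparison theorem for GBSDE}. Comparing with the $G$-BSDE driven by the process $I$ of Assumption \ref{assLU}(3) gives an upper bound, because $I\le U$ kills the $n$-penalty. A lower bound comes from the bound $N_0$ on $L$.
\item[(ii)] Obtain the key estimate $\|n(Y^{n,m}-U)^+\|_{S^\infty_G}\le C$ uniformly in $n,m$. I expect this step to be the main obstacle. Using \eqref{A3}, write the equation for $Y^{n,m}-U$. The terms $A^{I,+}$ and $K^u$ appear with signs that allow comparison: $K^u$ is non-increasing and only helps in the upper direction. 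Comparison with an explicit solution of the form $\tilde Y=U+\frac{C}{n}$ then gives $Y^{n,m}\le U+C/n$. The $z$-dependence is handled by linearizing $f(\cdot,Z^{n,m})-f(\cdot,\sigma)$ and a Girsanov change with a BMO kernel, which relies on Lemma \ref{lem2.4 2.5}. From this, $A^{n,m,-}_T$ is bounded in $L^\infty_G$, consistent with the bound $\|A^{n,-}\|_{L^\infty_G}\le C$ required in (AMC).
\item[(iii)] Bound $Z^{n,m}$ in $BMO_G$ and $A^{n,m,+}$, $K^{n,m}$ in every $L^p$, by applying Itô's formula to $e^{\gamma Y}$ with $\gamma$ large to absorb the quadratic term, together with Lemma \ref{lem2.2}. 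Then estimate $(Y^{n,m}-L)^-$ as in \cite{CT}.
\item[(iv)] Take $m=n$ and show $\{Y^{n,n}\}$ is Cauchy in $S^\alpha_G$, using the Itô–Girsanov argument of the uniqueness part together with the rates $(Y-U)^+\le C/n$ and $(Y-L)^-\to0$. The limit $(Y,Z,A)$ then satisfies (S1)–(S3), with the approximate sequences $A^{n,n,\pm}$ and $K^{n,n}$.
\end{enumerate}

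\emph{Approximation by $\bar Y^n$.} Letting $m\to\infty$ for fixed $n$ yields $\bar Y^n$ via Theorem \ref{wellposedness for RGBSDE}. The penalization is monotone in $m$, and Theorem \ref{comparison theorem for RGBSDE} gives that $\bar Y^n$ is non-decreasing in $n$. The bounds from (ii) pass to the limit, giving $(\bar Y^n-U)^+\le C/n$. It remains to show $\hat{\mathbb{E}}[\sup_t|Y^{n,n}_t-\bar Y^n_t|^\alpha]\to0$. For this, apply the same linearization and Girsanov argument to $Y^{n,n}-\bar Y^{n}$, using the lower obstacle estimate $\|(Y^{n,n}-L)^-\|\to0$, with the rate inherited from \cite{CT}-type estimates at penalization level $n$. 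Combined with $Y^{n,n}\to Y$, this gives $\hat{\mathbb{E}}[\sup_t|Y_t-\bar Y^n_t|^\alpha]\to0$.
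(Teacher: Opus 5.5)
Your architecture matches the paper's. Uniqueness comes from an a priori estimate for the difference of two solutions: cut-off linearization, Girsanov with a kernel dominated by $L_z(1+|Z^1|+|Z^2|)$, and (AMC) for the reflection terms. Existence goes through the doubly penalized $Y^{n,m}$ along the diagonal $m=n$, and the approximation by $\bar Y^n$ through a comparison of $Y^{n,n}$ with $\bar Y^n$. No rescaling or short intervals are needed for uniqueness: $\phi(q)\to\infty$ as $q\downarrow 1$, so every BMO kernel is admissible for $q$ close to $1$. The price is $L^p$ bounds with $p>q/(q-1)$ large, which is why (AMC) is imposed for every $\alpha$.

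Your step (ii) is shorter than the paper's route. The paper compares $Y^{n,m}$ with an auxiliary $\hat Y^n$ without the $m$-penalty (using $I$ to get $\hat Y^n\ge L$), then bounds $\hat Y^n-U$ by an explicit $e^{-nt}$-computation under a Girsanov expectation. Your direct comparison with $U+C/n$ does work, but you should say why. By \eqref{A3}, $U_t=\hE_t\bigl[U_T+A^{I,+}_T-A^{I,+}_t-\int_t^T b(s)\,ds\bigr]$, so $U\in S^\infty_G$ and $g(s,U_s+C/n,\sigma(s))$ is bounded. Then:
\begin{itemize}
\item $U+C/n\ge U\ge L$ kills the $m$-penalty;
\item the $n$-penalty contributes $-C$;
\item $dA^{I,+}\ge 0$.
\end{itemize}
Hence Theorem \ref{comparison theorem for GBSDE} applies directly, with $\tilde Z=\sigma$, $\tilde K=K^u$ and a suitable $C$ independent of $n,m$ (for $n$ large; small $n$ are trivial by (i)). No separate linearization is required.

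\textbf{The genuine gap is the lower-obstacle estimate along the diagonal.} Both (iv) and your last step need $\hE[\sup_t|(Y^{n,n}_t-L_t)^-|^\alpha]\to 0$, and ``as in \cite{CT}'' does not provide it. That argument uses monotonicity of the penalized solutions in the penalization parameter. This fails for $Y^{n,n}$, because the two penalties push in opposite directions. The paper (Lemma \ref{Yn-L}) replaces it as follows.
\begin{enumerate}
\item Write $g(s,Y^n_s,Z^n_s)=g(s,0,0)+m^{n,\varepsilon}_s+a^{n,\varepsilon}_sY^n_s+b^{n,\varepsilon}_sZ^n_s$. Absorb $b^{n,\varepsilon}Z^n$ into a Girsanov change whose reverse H\"older constants are uniform in $n$, by the uniform BMO bound on $Z^n$.
\item Compare, by the Lipschitz comparison theorem under $\hE^{n,\varepsilon}$, with the linear $G$-BSDE in which $n(y-L)^-$ is replaced by $n(L-y)$ and the other terms are frozen along $Y^n$. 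Its explicit solution gives $(Y^n_t-L_t)^-\le \hE^{n,\varepsilon}_t[|\widetilde L^n_t|]+C/n$. The $C/n$ comes from the bounded linearized driver and, crucially, from $n(Y^n-U)^+\le C$.
\item Control $\widetilde L^n$ by the uniform continuity of $L$ (Lemma 4.2 in \cite{LS}).
\item Return to $\hE$ by reverse H\"older.
\end{enumerate}
With this in hand, your last step goes through as in the paper. Comparison gives $\bar Y^n\ge Y^{n,n}$, and the $(\bar Y^n-L)\,d\bar A^n$ part is absorbed by the martingale condition, which is preserved under the Girsanov expectation. The remainder is bounded by $\sup_t (Y^{n,n}_t-L_t)^-\,\bar A^n_T$, so only convergence, not a rate, is needed.

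Two sign/role slips should also be fixed.
\begin{itemize}
\item In (iii) the transform must be $e^{-\gamma Y}$ with $\gamma>0$ (the paper takes $\gamma=4L_z$). Then the non-decreasing part $A^{n,m,+}-K^{n,m}$, which is only $L^p$-bounded, enters with a favorable sign and is dropped. Only $A^{n,m,-}$ then needs an $L^\infty_G$ bound, which is exactly what (ii) supplies. With $e^{+\gamma Y}$ the roles are reversed and the estimate fails.
\item In (i) the roles are swapped. The upper bound needs a supersolution lying above $L$: there the $m$-penalty is the unbounded one, while the $n$-penalty has a harmless sign. The lower bound needs a subsolution lying below $U$. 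So $I\ge L$ (or $L\le N_0$) serves the upper bound, and $I\le U$ (or $U\ge -N_0$) serves the lower one.
\end{itemize}

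Finally, $\bar Y^n$ is non-increasing, not non-decreasing, in $n$, since the driver $-n(y-U)^+$ decreases in $n$. This plays no role in the theorem.
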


\subsection{A priori estimates}

In this subsection, we will provide some a priori estimates for solutions to doubly reflected $G$-BSDEs. The following proposition can be regarded as an extension of Proposition 3.1 in \cite{CT} and Theorem 2.4 in \cite{LS}.

\begin{proposition}\label{est-Z-A}
    Let $\xi,g$ satisfy (H1) and (H3). Assume that $(Y,Z,A,A')$ solves
    \begin{align*}
        Y_t=\xi+\int_t^T g(s,Y_s,Z_s)d\langle B\rangle_s-\int_t^T Z_sdB_s+(A_T-A_t)-(A'_T-A'_t),
    \end{align*}
    where 
    \begin{align*}
        (Y,Z)\in S_G^\infty(0,T)\times H^2_G(0,T),
    \end{align*}
    and $A$, $A'$ are continuous nondecreasing processes with $A_0=A'_0=0$.

 \begin{itemize}
     \item[(1)] Suppose that $A'_T\in L^\infty_G(\Omega_T)$.  Then, for any $\alpha\geq 1$, there exist two constants 
     \begin{align*}
         &C_1:=C_1(\|Y\|_{S^\infty_G},\|A'_T\|_{L^\infty_G}, T,L_y,L_z,M_0,\bar{\sigma}),\\
         &C_2:=C_2(\|Y\|_{S^\infty_G},\|A'_T\|_{L^\infty_G}, T,L_y,L_z,M_0,\bar{\sigma},\alpha),
     \end{align*}
     such that 
    \begin{align*}
        \|Z\|_{BMO_G}\leq C_1,\  \hat{\E}[|A_T|^\alpha]\leq C_2.
    \end{align*}
    \item[(2)] Suppose that $A_T\in L^\infty_G(\Omega_T)$.  Then, for any $\alpha\geq 1$, there exist two constants 
    \begin{align*}
         &C'_1:=C'_1(\|Y\|_{S^\infty_G},\|A_T\|_{L^\infty_G}, T,L_y,L_z,M_0,\bar{\sigma}),\\
         &C'_2:=C'_2(\|Y\|_{S^\infty_G},\|A_T\|_{L^\infty_G}, T,L_y,L_z,M_0,\bar{\sigma},\alpha),
     \end{align*}
     such that 
    \begin{align*}
        \|Z\|_{BMO_G}\leq C'_1,\  \hat{\E}[|A'_T|^\alpha]\leq C'_2.
    \end{align*}
 \end{itemize}

\end{proposition}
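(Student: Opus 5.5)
We prove (1); part (2) follows by the same argument with the roles of $A$ and $A'$ exchanged, replacing $\xi,g$ by $-\xi$ and $-g(s,-y,-z)$ and $(Y,Z)$ by $(-Y,-Z)$. The approach is the exponential transform used for quadratic BSDEs (as in Proposition 3.1 of \cite{CT}), combined with the $L^\alpha$ estimate for the reflecting term (as in Theorem 2.4 of \cite{LS}). Throughout we work under an arbitrary $\P\in\mathcal{P}$. Under each such $\P$ the equation is a classical BSDE driven by the continuous martingale $B$ with $d\langle B\rangle_s\in[\underline{\sigma}^2ds,\bar{\sigma}^2ds]$. All constants are made independent of $\P$, and we then take the supremum over $\P$.

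\textbf{Step 1: BMO bound.} By (H1) and (H3), $|g(s,y,z)|\le M_0+L_y|y|+L_z(|z|+|z|^2)\le C_0+\tfrac{3}{2}L_z|z|^2$ on $\{|y|\le\|Y\|_{S^\infty_G}\}$, where $C_0$ depends on $M_0,L_y,L_z,\|Y\|_{S^\infty_G}$. Set $\gamma=3L_z+1$ and $\phi(y)=\gamma^{-2}(e^{\gamma(y+\|Y\|_{S^\infty_G})}-1-\gamma(y+\|Y\|_{S^\infty_G}))$. Then $\phi\ge0$, $\phi'\ge0$ on the relevant range, and $\tfrac12\phi''-\tfrac32L_z|\phi'|\ge \tfrac12$. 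Fix a stopping time $\tau$ and apply It\^o's formula to $\phi(Y_t)$ on $[\tau,T]$:
\begin{align*}
\tfrac12\E^\P_\tau\Big[\int_\tau^T|Z_s|^2d\langle B\rangle_s\Big]\le \phi(\xi)+\E^\P_\tau\Big[\int_\tau^T\phi'(Y_s)\big(C_0\,d\langle B\rangle_s+dA_s-dA'_s\big)\Big].
\end{align*}
The troublesome term is $\int\phi'(Y)dA$, since $A$ has no a priori bound; the term $-\int\phi'(Y)dA'$ is at most $0$. To control $A$, use the equation directly:
\begin{align*}
A_T-A_\tau = Y_\tau-\xi-\int_\tau^T g\,d\langle B\rangle+\int_\tau^T Z\,dB+A'_T-A'_\tau .
\end{align*}
Taking $\E^\P_\tau$ gives
\begin{align*}
\E^\P_\tau[A_T-A_\tau]\le C+C\,\E^\P_\tau\Big[\int_\tau^T|Z|^2d\langle B\rangle\Big],
\end{align*}
where $C$ depends on $\|A'_T\|_{L^\infty_G}$. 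This bound is circular, and breaking the circularity is the main obstacle.

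To break it, we instead choose a transform that turns $dA$ into a favorable sign. Take $\psi(y)=\gamma^{-2}(e^{-\gamma(y-\|Y\|_{S^\infty_G})}-1+\gamma(y-\|Y\|_{S^\infty_G}))$, for which $\psi'\le0$ on the range and $\tfrac12\psi''-\tfrac32L_z|\psi'|\ge\tfrac12$. With this choice, It\^o's formula gives $-\int\psi'(Y)dA\le0$ when written in the inequality direction above. The remaining term is $\int|\psi'(Y)|dA'$, which is bounded by $C\|A'_T\|_{L^\infty_G}$. Hence
\begin{align*}
\E^\P_\tau\Big[\int_\tau^T|Z|^2d\langle B\rangle\Big]\le C_1^2,
\end{align*}
uniformly in $\tau$ and $\P$. (We will check the sign bookkeeping carefully here; the point is that the convex transform is chosen monotone in the direction that makes the unbounded process $A$ enter with a nonpositive coefficient.)

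\textbf{Step 2: $L^\alpha$ bound for $A_T$.} From the representation of $A_T$ with $\tau=0$,
\begin{align*}
|A_T|\le 2\|Y\|_{S^\infty_G}+\bar{\sigma}^2T C_0+C\int_0^T|Z|^2d\langle B\rangle+\Big|\int_0^TZ\,dB\Big|+\|A'_T\|_{L^\infty_G}.
\end{align*}
Raise this to the power $\alpha$ and take $\hE$. Lemma \ref{lem2.2}, applied with exponent $2\alpha$, bounds $\hE[(\int_0^T|Z|^2d\langle B\rangle)^{\alpha}]$ by $C\|Z\|_{BMO_G}^{2\alpha}$. The BDG inequality (Proposition \ref{BDG}), together with $d\langle B\rangle\ge\underline{\sigma}^2ds$, or alternatively Lemma \ref{lem2.2} again, bounds the stochastic integral term by $C\|Z\|_{BMO_G}^{\alpha}$. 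Combining these with Step 1 yields $\hE[|A_T|^\alpha]\le C_2$.
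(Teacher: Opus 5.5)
Your proof is correct and follows essentially the paper's route: It\^{o}'s formula under each $\mathbb{P}\in\mathcal{P}$ for a decreasing convex exponential transform (the paper uses $e^{-aY}$ with $a=4L_z$; your $\psi$ is $e^{-\gamma y}$ with $\gamma=3L_z+1$ up to an affine correction), so that $dA$ enters with a nonpositive coefficient and $dA'$ is controlled by $\|A'_T\|_{L^\infty_G}$, followed by reading off $A_T$ from the equation and applying Lemma \ref{lem2.2} and BDG.

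The first attempt with the increasing $\phi$ can simply be deleted. Also fix a sign slip: the term you drop is $+\int_\tau^T\psi'(Y_s)\,dA_s\le 0$, not $-\int\psi'(Y)\,dA$. Your reduction of (2) to (1) via $(Y,Z,\xi,g)\mapsto(-Y,-Z,-\xi,-g(s,-y,-z))$ is valid, and it makes explicit what the paper leaves as analogous.
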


\begin{proof}
    The proof is similar with the one for Proposition 3.1 in \cite{CT}. For readers' convenience, we give a short proof for the first case here. Note that for any $y,z$, we have
    \begin{align*}
        |g(s,y,z)|\leq |g(s,0,0)|+L_y|y|+L_z(|z|+|z|^2)\leq |g(s,0,0)|+L_y|y|+\frac{1}{2}L_z+\frac{3}{2}L_z|z|^2.
    \end{align*}
    For some $a>0$, which will be determined later, applying It\^{o}'s formula to $e^{-aY_t}$ under each $\P\in\mathcal{P}$, for each $\tau\in\mathcal{T}_0^T$, we have
    \begin{align*}
        &\frac{a^2}{2}\int_\tau^T e^{-aY_s}Z_s^2d\langle B\rangle_s\\
        =&e^{-a\xi}-e^{-aY_\tau}-\int_\tau^T a e^{-aY_s}g(s,Y_s,Z_s)d\langle B\rangle_s+\int_\tau^T a e^{-aY_s}Z_s dB_s\\
        &-\int_\tau^T a e^{-aY_s}dA_s+\int_\tau^T a e^{-aY_s}dA'_s\\
        \leq &e^{-a\xi}-e^{-aY_\tau}+\int_\tau^T a e^{-aY_s}(|g(s,0,0)|+\frac{1}{2}L_z+L_y|Y_s|)d\langle B\rangle_s\\
        &+\frac{3a L_z}{2}\int_\tau^T e^{-aY_s}Z_s^2d\langle B\rangle_s+\int_\tau^T a e^{-aY_s}Z_s dB_s+\int_\tau^T a e^{-aY_s}dA'_s.
    \end{align*}
    Choosing $a=4L_z$ and taking conditional expectations under $\P$ on both sides yield that 
    \begin{align*}
        &2L^2_z\E^\P_\tau\left[\int_\tau^T e^{-aY_s}Z_s^2d\langle B\rangle_s\right]\\
        \leq &\E^\P_\tau\left[e^{-a\xi}-e^{-aY_\tau}+\int_\tau^T a e^{-aY_s}(|g(s,0,0)|+\frac{1}{2}L_z+L_y|Y_s|)d\langle B\rangle_s+\int_\tau^T a e^{-aY_s}dA'_s\right]\\
        \leq &2 e^{4L_z\|Y\|_{S^\infty_G}}+4L_z\bar{\sigma}^2\left(M_0 T+\frac{1}{2}L_z T+L_y T \|Y\|_{S^\infty_G}\right)e^{4L_z\|Y\|_{S^\infty_G}}+4L_ze^{4L_z\|Y\|_{S^\infty_G}}\|A'_T\|_{L^\infty_G}.
    \end{align*}
    Since $\tau\in \mathcal{T}_0^T$ and $\P\in\mathcal{P}$ are arbitrarily chosen, we obtain the estimate for $\|Z\|_{BMO_G}$. 

    For the estimate of $A$, note that 
    \begin{align*}
        A_T=Y_0-\xi-\int_0^T g(s,Y_s,Z_s)d\langle B\rangle_s+\int_0^T Z_s dB_s+A'_T.
    \end{align*}
    Comparing with the estimate for $\hat{\E}[|A_T|^\alpha]$ in Proposition 3.1 in \cite{CT}, for the estimate in the present setting, there will be an additional term $C_\alpha\hat{\E}[|A'_T|^\alpha]$, where $C_\alpha$ is a constant only depending on $\alpha$. Then, we obtain the desired result.
\end{proof}

The following result is an extension of Proposition 3.3 in \cite{CT} and Proposition 3.7 in \cite{LN}. 

\begin{proposition}\label{est-Y1-Y2}
    Let $(\xi^i,g^i,L^i,U^i)$ be two sets of parameters such that $(\xi^i,g^i)$ satisfies (H1) and (H3), $L^i,U^i$ satisfy Assumption \ref{assLU} (1), $i=1,2$. Suppose that $(Y^i,Z^i,A^i)\in S_G^\infty(0,T)\times BMO_G\times\cap_{\alpha\geq 2}S^\alpha_G(0,T)$ is a solution to the doubly reflected $G$-BSDE with parameters $(\xi^i,g^i,L^i,U^i)$, $i=1,2$. Moreover, assume that 
    \begin{align*}
        \|L_z(1+|Z^1|+|Z^2|)\|_{BMO_G}<\phi(q):=\left(1+\frac{1}{q^2}\log \frac{2q-1}{2(q-1)}\right)^{\frac{1}{2}}-1,
    \end{align*}
    where $q>1$. Then, for any $p>\frac{q}{q-1}$, there exists a constant $C:=C(p,T,L_z,L_y,\bar{\sigma},N_0)$, such that for any $t\in[0,T]$,
    \begin{align*}
        |Y^1_t-Y^2_t|^2\leq &C\left(\hE_t\left[|\xi^1-\xi^2|^{2p}\right]\right)^{\frac{1}{p}}+C\left(\hE_t\left[\left(\int_t^T|\hat{\lambda}_s|^2 d\langle B \rangle_s\right)^p\right]\right)^{\frac{1}{p}}\\
        &+C\Psi_{t,T}\left(\hat{\mathbb{E}}_t\left[\sup_{s\in[t,T]}|\hat{L}_s|^{2p}\right]+\hat{\mathbb{E}}_t\left[\sup_{s\in[t,T]}|\hat{U}_s|^{2p}\right]\right)^{\frac{1}{2p}},
    \end{align*}
    where $\hat{\lambda}_s=g^1(s,Y^2_s,Z^2_s)-g^2(s,Y^2_s,Z^2_s)$ and
    \begin{align*}
        \Psi_{t,T}=\liminf_{n\rightarrow\infty}\left(\sum_{i=1}^2\left(\hE_t\left[|A^{i,n,+}_T|^{2p}\right]+\hE_t\left[|A^{i,n,-}_T|^{2p}\right]\right)\right)^{\frac{1}{2p}}.
    \end{align*}
\end{proposition}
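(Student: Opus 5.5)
We work with $\hat Y=Y^1-Y^2$ and the analogous hats $\hat Z,\hat\xi,\hat L=L^1-L^2,\hat U=U^1-U^2$, and follow the scheme of Proposition 3.3 in \cite{CT}: apply Itô's formula to $|\hat Y|^2$, linearize the generator, and take the expectation under the $G$-Girsanov measure. Only the $d\langle B\rangle$ generators $g^i$ appear, so the linearization reads
\[
g^1(s,Y^1_s,Z^1_s)-g^2(s,Y^2_s,Z^2_s)=a_s\hat Y_s+b_s\hat Z_s+\hat\lambda_s ,
\]
with $|a_s|\le L_y$ and $|b_s|\le L_z(1+|Z^1_s|+|Z^2_s|)$. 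The hypothesis gives $\|b\|_{BMO_G}<\phi(q)$. By Lemma \ref{lem2.3} and Lemma \ref{lem2.6 2.7}, $\widetilde B=B-\int b\,d\langle B\rangle$ is then a $G$-Brownian motion under $\widetilde{\E}_t[\cdot]=\hE_t[\mathcal E(b)_T/\mathcal E(b)_t\,\cdot]$. Lemma \ref{lem2.4 2.5}(i) together with Hölder's inequality with exponents $(q,\,q/(q-1))$ converts $\widetilde{\E}_t$-bounds into $\hE_t[(\cdot)^{p}]^{1/p}$ bounds for $p>q/(q-1)$. This is why the conclusion has the form $C(\hE_t[\cdot^{\,p}])^{1/p}$.

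Since the solution is defined only through (AMC), we use the approximate sequences $A^{i,n,\pm},K^{i,n}$ and set $A^{i,n}:=A^{i,n,+}-A^{i,n,-}-K^{i,n}$. Apply $G$-Itô's formula to $e^{\beta s}|\hat Y_s|^2$ (with $\beta$ large to absorb $a_s$), computing $d\langle B\rangle$ terms under the new expectation. This gives
\[
e^{\beta t}|\hat Y_t|^2+\int_t^T e^{\beta s}|\hat Z_s|^2d\langle B\rangle_s\le |\hat\xi|^2e^{\beta T}+\int_t^T e^{\beta s}|\hat\lambda_s|^2 d\langle B\rangle_s-2\int_t^Te^{\beta s}\hat Z_s\,d\widetilde B_s+2\int_t^T e^{\beta s}\hat Y_s\,d(A^1_s-A^2_s).
\]
The last term is split into its $n$-approximation plus an error $2\int\hat Y\,d\big((A^1-A^2)-(A^{1,n}-A^{2,n})\big)$, which vanishes by the second bullet of (AMC). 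We do not integrate this error by parts; instead we bound it by a variational argument and pass to $\liminf$ in $L^\alpha$, as in \cite{LN}.

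The main obstacle is the sign analysis of the reflection terms. On the lower-obstacle part,
\[
\hat Y\,dA^{1,n,+}=(Y^1-L^1)\,dA^{1,n,+}+(L^1-L^2)\,dA^{1,n,+}-(Y^2-L^2)\,dA^{1,n,+}.
\]
The first term tends to $0$ by (AMC). The second is bounded by $\sup|\hat L|\,A^{1,n,+}_T$. The third is $\le0$ because $Y^2\ge L^2$. The symmetric terms from $A^{2,n,+}$ and $A^{i,n,-}$ are treated in the same way, with $\hat U$ and the upper constraint, and the $K$ terms have the right sign. The $K^{i,n}$ terms are the delicate point. We handle $-\hat Y\,dK^{1,n}\le (Y^1)^{-}\ldots$ as in \cite{LN}: write $\hat Y=\hat Y^+-\hat Y^-$ and use that $\int \hat Y^+ dK^{1,n}$ is a $\widetilde{\E}$-martingale-type term controlled by Lemma \ref{lem2.6 2.7}(ii). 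Alternatively, one may keep the terms of the form $-\int e^{\beta s}\hat Y_s^{\pm}\,dK^{i,n}_s$ with their favourable sign and bound the remaining products by $\|\hat Y\|_{S^\infty_G}$ times $K^{i,n}_T$. If the latter route is needed, the $K$ contribution is absorbed into $\Psi$ through (AMC)'s uniform bounds.

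Taking $\widetilde{\E}_t$ removes the $d\widetilde B$ integral. We then apply Hölder's inequality, first with exponents $(q,\,q/(q-1))$ via Lemma \ref{lem2.4 2.5}, and then Cauchy–Schwarz on $\sup|\hat L|\,A^{i,n,+}_T$ and on $\sup|\hat U|\,A^{i,n,-}_T$ with exponent $2p$. This yields
\[
C\,\big(\hE_t[|A^{i,n,\pm}_T|^{2p}]\big)^{1/2p}\big(\hE_t[\sup|\hat L|^{2p}]+\hE_t[\sup|\hat U|^{2p}]\big)^{1/2p}+o(1).
\]
Finally, letting $n\to\infty$ along a $\liminf$ subsequence produces $\Psi_{t,T}$ and gives the stated estimate.
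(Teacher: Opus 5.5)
Your outline follows the paper's proof closely: It\^o's formula for $e^{\beta t}|\hat Y_t|^2$, a $G$-Girsanov change of expectation, and replacing $A^i$ by $A^{i,n}=A^{i,n,+}-A^{i,n,-}-K^{i,n}$. It also uses the same three-term splitting of $\hat Y\,dA^{i,n,\pm}$, the $\hat Y^\pm$ device for $K^{i,n}$ via Lemma \ref{lem2.6 2.7}(ii), H\"older with exponent $2p$, and a $\liminf$ in $n$. The linearization step, however, does not go through as written. You take the raw quotient $b_s=\frac{g^1(s,Y^2_s,Z^1_s)-g^1(s,Y^2_s,Z^2_s)}{\hat Z_s}\mathbf{1}_{\{\hat Z_s\neq0\}}$ and assert $\|b\|_{BMO_G}<\phi(q)$. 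In the $G$-framework, however, $BMO_G$, $\mathcal{E}(\cdot)$ and Lemmas \ref{lem2.3}--\ref{lem2.6 2.7} only apply to generators in $H^2_G(0,T)$. There is no reason for this quotient to lie in $H^2_G(0,T)$, since the indicator typically destroys the quasi-continuity required of such processes. So $\int b\,dB$, $\widetilde{\E}$, and the fact that $\widetilde B$ is a $G$-Brownian motion are not available.

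This is exactly why the paper introduces a Lipschitz cutoff $l$ with $I_{[-\varepsilon,\varepsilon]}\le l\le I_{[-2\varepsilon,2\varepsilon]}$. It replaces $b$ by $\hat b^\varepsilon_s=[1-l(\hat Z_s)]\frac{g^1(s,Y^2_s,Z^1_s)-g^1(s,Y^2_s,Z^2_s)}{|\hat Z_s|^2}\hat Z_s$, which lies in $H^2_G(0,T)$ by Lemma 3.6 of \cite{HLSH}. It still satisfies $|\hat b^\varepsilon_s|\le L_z(1+|Z^1_s|+|Z^2_s|)$, so the constant in \eqref{est-widetildeE} is uniform in $\varepsilon$. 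The price is a remainder $\hat m^\varepsilon$ with $|\hat m^\varepsilon_s|\le 2\varepsilon(L_y+L_z(1+2\varepsilon+2|Z^1_s|))$, which adds $\int_t^T e^{rs}|\hat m^\varepsilon_s|^2d\langle B\rangle_s$ to the It\^o inequality. Everything is then computed under the $\varepsilon$-dependent $\widetilde{\E}^\varepsilon$. The limits must be taken with $n\to\infty$ first and $\varepsilon\to0$ second, using $\lim_{\varepsilon\to0}\widetilde{\E}^\varepsilon_t[\int_t^T|\hat m^\varepsilon_s|^2d\langle B\rangle_s]=0$, which relies on $Z^1\in BMO_G$. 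Your argument needs this layer.

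Separately, drop your ``alternative'' treatment of the $K^{i,n}$-terms. The term $-\int_t^T e^{\beta s}\hat Y^+_s\,dK^{1,n}_s\ge0$ has the unfavourable sign. Bounding it by $\|\hat Y\|_{S^\infty_G}|K^{1,n}_T|$ leaves a term that does not vanish when the data coincide and is not of the form $\Psi_{t,T}$ times the $\hat L,\hat U$ factor, so the estimate, and the uniqueness built on it, would be lost. Only the martingale route works. The process $\int_0^\cdot e^{rs}\hat Y^+_s\,dK^{1,n}_s$ (and likewise with $\hat Y^-$ and $K^{2,n}$) is a non-increasing $G$-martingale under $\hE$ whose terminal value lies in every $L^p_G$. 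Hence Lemma \ref{lem2.6 2.7}(ii) transfers it to $\widetilde{\E}^\varepsilon$. Together with $\int 2e^{rs}\hat Y_s\hat Z_s\,d\widetilde B^\varepsilon_s$ it then forms a $G$-martingale under $\widetilde{\E}^\varepsilon$; note the factor $\hat Y_s$ is missing from your display.

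Finally, the error term $\int\hat Y\,d(A^i-A^{i,n})$ does not vanish by the second bullet of (AMC) alone. As in Lemma 3.1 of \cite{LS}, you also need the uniform moment bounds on $A^{i,n,\pm}_T$ and $K^{i,n}_T$ and the continuity of $\hat Y$. Only then does \eqref{est-widetildeE} transfer the convergence to $\widetilde{\E}^\varepsilon_t$.
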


\begin{proof}
We define $\hat{\xi}:=\xi^1-\xi^2$ and
\begin{align*}
    \hat{Y}_t:=Y^1_t-Y^2_t, \ \hat{Z}_t:=Z^1_t-Z^2_t,\ \hat{L}_t:=L^1_t-L^2_t, \ \hat{U}_t:=U^1_t-U^2_t, \ \hat{A}_t:=A^1_t-A^2_t.
\end{align*}
For each given $\varepsilon>0$, let $l$ be a Lipschitz continuous function such that $I_{[-\varepsilon,\varepsilon]}(x)\leq l(x)\leq I_{[-2\varepsilon,2\varepsilon]}(x)$ for $x\in\mathbb{R}$. For any $s\in[0,T]$, set 
    \begin{align*}
        \hat{a}^{\varepsilon}_s&:=[1-l(\hat{Y}_s )]\frac{{g}^1(s,{Y}^1_s,{Z}^1_s)-{g}^1(s,Y^2_s,{Z}^1_s)}{{\hat{Y}_s}}I_{\{|\hat{Y}_s|>0\}},\\
        \hat{b}^{\varepsilon}_s&:=[1-l(\hat{Z}_s)]\frac{{g}^1(s,Y^2_s,{Z}^1_s)-{g}^1(s,Y^2_s,Z^2_s)}{|\hat{Z}_s|^2}\hat{Z}_sI_{\{|\hat{Z}_s|>0\}},\\
        m^{\varepsilon}_s&:=l(\hat{Y}_s)[{g}^1(s,{Y}^1_s,{Z}^1_s)-{g}^1(s,Y^2_s,{Z}^1_s)]+l(\hat{Z}_s)[{g}^1(s,Y^2_s,{Z}^1_s)-{g}^1(s,Y^2_s,Z^2_s)].
    \end{align*}
    It is easy to check that 
    \begin{align*}
        g^1(s,Y^1_s,Z^1_s)-g^2(s,Y^2_s,Z^2_s)=\hat{\lambda}_s+\hat{m}^\varepsilon_s+\hat{a}^\varepsilon_s \hat{Y}_s+\hat{b}^\varepsilon_s \hat{Z}_s
    \end{align*}
    and
    \begin{align*}
        |\hat{a}^{\varepsilon}_s|\leq L_y, \ |\hat{b}^{\varepsilon}_s|\leq L_z(1+|{Z}^1_s|+|Z^2_s|), \ |\hat{m}^{\varepsilon}_s|\leq 2\varepsilon(L_y+L_z(1+2\varepsilon+2|Z^1_s|)).
    \end{align*}
    Moreover, by Lemma 3.6 in \cite{HLSH}, $\hat{b}^{\varepsilon}\in H^2_G(0,T)$ is a $G$-BMO martingale generator. We may define a new conditional $G$-expectation $\widetilde{\mathbb{E}}^{\varepsilon}_t$ as follows
    \begin{align*}
        \widetilde{\mathbb{E}}^{\varepsilon}_t[X]:=\hat{\E}_t\left[\frac{\mathcal{E}(\hat{b}^{\varepsilon})_T}{\mathcal{E}(\hat{b}^{\varepsilon})_t}X\right], \ X\in L^p(\Omega_T).
    \end{align*}
    By Lemma \ref{lem2.6 2.7}, $\widetilde{B}^{\varepsilon}$ is a $G$-Brownian motion under $\widetilde{\E}^{\varepsilon}$, where $\widetilde{B}^{\varepsilon}_t=B_t-\int_0^t \hat{b}^{\varepsilon}_sd\langle B\rangle_s$. Set $p'=\frac{p}{p-1}$. Note the fact that 
    \begin{align*}
        \|\hat{b}^\varepsilon\|_{BMO_G}<\phi(q)<\phi(p').
    \end{align*}
    By Lemma \ref{lem2.4 2.5}, for any $X\in L^{p}_G(\Omega_T)$ we have
    \begin{align}\label{est-widetildeE}
        \widetilde{\E}^\varepsilon_t[X]\leq \left(\hat{\E}_t\left[\frac{\mathcal{E}(b^{\varepsilon})^{p'}_T}{\mathcal{E}(b^{\varepsilon})^{p'}_t}\right]\right)^{\frac{1}{p'}}\left(\hat{\E}_t[|X|^{p}]\right)^{\frac{1}{p}}\leq C_p \left(\hat{\E}_t[|X|^{p}]\right)^{\frac{1}{p}}, \ \textrm{q.s.}
    \end{align}
     Then, we have
    \begin{align*}
        \hat{Y}_t=\hat{\xi}+\int_t^T \left(\hat{\lambda}_s+\hat{m}^\varepsilon_s+\hat{a}^\varepsilon_s\hat{Y}_s\right)d\langle B\rangle_s-\int_t^T\hat{Z}_sd\widetilde{B}^\varepsilon_s+\hat{A}_T-\hat{A}_t.
    \end{align*}
    For any fixed $r>0$, applying It\^{o}'s formula to $e^{rt}|\hat{Y}_t|^2$, for any $t\in[0,T]$, we have
    \begin{align*}
        &e^{rt}|\hat{Y}_t|^2+r\int_t^T e^{rs}|\hat{Y}_s|^2ds+\int_t^T e^{rs}|\hat{Z}_s|^2d\langle B\rangle_s\\
        =&e^{rT}|\hat{\xi}|^2+\int_t^T 2e^{rs}\hat{Y}_s\left(\hat{\lambda}_s+\hat{m}^\varepsilon_s+\hat{a}^\varepsilon_s\hat{Y}_s\right)d\langle B\rangle_s-\int_t^T 2e^{rs}\hat{Y}_s \hat{Z}_sd\widetilde{B}^\varepsilon_s+\int_t^T2 e^{rs}\hat{Y}_sd\hat{A}_s\\
        \leq &e^{rT}|\hat{\xi}|^2+\int_t^T e^{rs}\left(|\hat{\lambda}_s|^2+|\hat{m}^\varepsilon_s|^2\right)d\langle B\rangle_s +2(1+L_y)\int_t^Te^{rs}|\hat{Y}_s|^2 d\langle B\rangle_s\\
        &-\int_t^T 2e^{rs}\hat{Y}_s \hat{Z}_sd\widetilde{B}^\varepsilon_s+\int_t^T2 e^{rs}\hat{Y}_sd\hat{A}_s.
    \end{align*}
    Set $A^{i,n}=A^{i,n,+}-A^{i,n,-}-K^{i,n}$, $i=1,2$, $\hat{Y}^L_t=(Y^1_t-L^1_t)-(Y^2_t-L_t^2)$ and $\hat{Y}^U_t=(U^1_t-Y_t^1)-(U^2_t-Y_t^2)$. Noting that $\hat{Y}^L_t\leq Y^1_t-L^1_t$, $\hat{Y}^U_t \leq U_t^1-Y_t^1$ and $A^{1,n,+}$, $A^{1,n,-}$, $-K^{1,n}$ are non-decreasing processes, it is easy to check that
	\begin{align*}
		\int_t^Te^{rs}\hat{Y}_sdA^{1}_s
		=&\int_t^T e^{rs}\hat{Y}_sd({A}^1_s-A_s^{1,n})+\int_t^Te^{rs}\hat{Y}_sd{A}^{1,n}_s\\
		\leq &\int_t^T e^{rs}(Y^1_s-L^1_s)dA_s^{1,n,+}+\int_t^T  e^{rs}|\hat{L}_s|dA_s^{1,n,+}\\
		&+\int_t^T e^{rs}(U_s^1-Y_s^1)dA_s^{1,n,-}+\int_t^T  e^{rs}|\hat{U}_s|dA_s^{1,n,-}\\
		&+\left|\int_t^T e^{rs}\hat{Y}_sd({A}^1_s-A_s^{1,n})\right|-\int_t^T e^{rs}(\hat{Y}_s)^+d{K}^{1,n}_s.
	\end{align*}
    Similarly, we have
    \begin{align*}
		-\int_t^Te^{rs}\hat{Y}_sdA^{2}_s
		\leq &\int_t^T e^{rs}(Y^2_s-L^2_s)dA_s^{2,n,+}+\int_t^T  e^{rs}|\hat{L}_s|dA_s^{2,n,+}\\
		&+\int_t^T e^{rs}(U_s^2-Y_s^2)dA_s^{2,n,-}+\int_t^T  e^{rs}|\hat{U}_s|dA_s^{2,n,-}\\
		&+\left|\int_t^T e^{rs}\hat{Y}_sd({A}^2_s-A_s^{2,n})\right|-\int_t^T e^{rs}(\hat{Y}_s)^-d{K}^{2,n}_s.
	\end{align*}
    Set $r>2\bar{\sigma}^2(1+L_y)$ and 
    \begin{align*}
        M^{n,\varepsilon}_t=\int_0^t 2e^{rs}\hat{Y}_s \hat{Z}_sd\widetilde{B}^\varepsilon_s+\int_0^t 2e^{rs}\hat{Y}^+_sdK^{1,n}_s+\int_0^t 2e^{rs}\hat{Y}^-_sdK^{2,n}_s.
    \end{align*}
    All the above analysis indicates that 
    \begin{equation}\label{erthatYt}\begin{split}
         e^{rt}|\hat{Y}_t|^2+M^{n,\varepsilon}_T-M^{n,\varepsilon}_t\leq &e^{rT}|\hat{\xi}|^2+\int_t^T e^{rs}\left(|\hat{\lambda}_s|^2+|\hat{m}^\varepsilon_s|^2\right)d\langle B\rangle_s\\
         &+\int_t^T e^{rs}(Y^1_s-L^1_s)dA_s^{1,n,+}+\int_t^T  e^{rs}|\hat{L}_s|dA_s^{1,n,+}\\
		&+\int_t^T e^{rs}(U_s^1-Y_s^1)dA_s^{1,n,-}+\int_t^T  e^{rs}|\hat{U}_s|dA_s^{1,n,-}\\
        &+\int_t^T e^{rs}(Y^2_s-L^2_s)dA_s^{2,n,+}+\int_t^T  e^{rs}|\hat{L}_s|dA_s^{2,n,+}\\
		&+\int_t^T e^{rs}(U_s^2-Y_s^2)dA_s^{2,n,-}+\int_t^T  e^{rs}|\hat{U}_s|dA_s^{2,n,-}\\
        &+\left|\int_t^T e^{rs}\hat{Y}_sd({A}^1_s-A_s^{1,n})\right|+\left|\int_t^T e^{rs}\hat{Y}_sd({A}^2_s-A_s^{2,n})\right|.
    \end{split}\end{equation}
It is easy to check that $\{\int_0^te^{rs}\hat{Y}^+_sdK^{1,n}_s\}_{t\in[0,T]}$ is a non-increasing $G$-martingale under $\hat{\E}$ and for any $\alpha\geq 2$
    \begin{align*}
        \hat{\E}\left[\left(\int_0^t \hat{Y}^+_sdK^{1,n}_s\right)^\alpha\right]\leq \left(\hat{\E}\left[\sup_{t\in[0,T]}|\hat{Y}_t|^{2\alpha}\right]\right)^{1/2}\left(\hat{\E}\left[|K^{1,n}_T|^{2\alpha}\right]\right)^{1/2}<\infty.
    \end{align*}
    Moreover, noting the fact that
    \begin{align*}
        \|\hat{b}^\varepsilon\|_{BMO_G}\leq \|L_z(1+|Z^1|+|Z^2|)\|_{BMO_G}<\phi(q),
    \end{align*}
    by Lemma \ref{lem2.6 2.7}, $\{\int_0^te^{rs}\hat{Y}^+_sdK^{1,n}_s\}_{t\in[0,T]}$ is a non-increasing $G$-martingale under $\widetilde{\E}^\varepsilon$. By a similar analysis as the proof of Lemma 3.4 in \cite{HJPS1}, the process $M^{n,\varepsilon}$ is a $G$-martingale under $\widetilde{\E}^\varepsilon$. 
 Since for any $\alpha\geq 2$, we have $\hat{Y}\in S^\alpha_G(0,T)$ and 
\begin{align*}
     \lim_{n\rightarrow\infty}\hat{\mathbb{E}}\left[\sup_{t\in[0,T]}|A^1_t-A^{1,n}_t|^\alpha\right]=0.
 \end{align*} 
 By a similar analysis as the proof of Lemma 3.1 in \cite{LS},  we obtain that 
    \begin{align*}
        \lim_{n\rightarrow\infty}\hE\left[\sup_{t\in[0,T]}\left|\int_0^t e^{rs}\hat{Y}_sd(A^1_s-A^{1,n}_s)\right|^{\frac{\alpha}{2}}\right]=0.
    \end{align*}
    Applying Equation \eqref{est-widetildeE} implies that 
    \begin{align*}
        \lim_{n\rightarrow\infty}\widetilde{\E}^\varepsilon_t\left[\left|\int_t^T e^{rs}\hat{Y}_sd({A}^1_s-A_s^{1,n})\right|\right]=0.
    \end{align*}
    Similarly, we have
    \begin{align*}
        \lim_{n\rightarrow\infty}\widetilde{\E}^\varepsilon_t\left[\left|\int_t^T e^{rs}\hat{Y}_sd({A}^2_s-A_s^{2,n})\right|\right]=0.
    \end{align*}
 Since $(Y^1,A^1)$ satisfies $(\textmd{ASC})$, then for any $\alpha\geq 2$, we have
    \begin{align*}
       \lim_{n\rightarrow\infty} \hE_t\left[\left|\int_t^T (Y^1_s-L^1_s)dA^{1,n,+}_s\right|^\alpha\right]=0.
    \end{align*}
    Applying Equation \eqref{est-widetildeE}  yields that
    \begin{align*}
       \lim_{n\rightarrow\infty} \widetilde{\E}^\varepsilon_t\left[\int_t^T (Y^1_s-L^1_s)dA^{1,n,+}_s\right]=0.
    \end{align*}
    Similarly, we have
    \begin{align*}
    &\lim_{n\rightarrow\infty} \widetilde{\E}^\varepsilon_t\left[\int_t^T (Y^2_s-L^2_s)dA^{2,n,+}_s\right]=0,\\
       &\lim_{n\rightarrow\infty} \widetilde{\E}^\varepsilon_t\left[\int_t^T (U^1_s-Y^1_s)dA^{1,n,-}_s\right]=0,\\
       &\lim_{n\rightarrow\infty} \widetilde{\E}^\varepsilon_t\left[\int_t^T (U^2_s-Y^2_s)dA^{2,n,-}_s\right]=0.
    \end{align*}
Applying \eqref{est-widetildeE} and the H\"{o}lder inequality, we obtain that
	\begin{align*}
		\widetilde{\mathbb{E}}^\varepsilon_t\left[\int_t^T |\hat{L}_s|dA_s^{1,n,+}\right]
		\leq& C_p\left(\hat{\mathbb{E}}_t\left[\sup_{s\in[t,T]}|\hat{L}_s|^p|A^{1,n,+}_T|^p\right]\right)^{\frac{1}{p}}\\
		\leq &C_p\left(\hat{\mathbb{E}}_t\left[\sup_{s\in[t,T]}|\hat{L}_s|^{2p}\right]\right)^{\frac{1}{2p}}\left(\hat{\mathbb{E}}_t\left[|A_T^{1,n,+}|^{2p}\right]\right)^{\frac{1}{2p}}.
	\end{align*}
	 Similarly, we have 	\begin{align*}
		&\widetilde{\mathbb{E}}^\varepsilon_t\left[\int_t^T |\hat{U}_s|dA_s^{1,n,-}\right]
		\leq C_p\left(\hat{\mathbb{E}}_t\left[\sup_{s\in[t,T]}|\hat{U}_s|^{2p}\right]\right)^{\frac{1}{2p}}\left(\hat{\mathbb{E}}_t\left[|A_T^{1,n,-}|^{2p}\right]\right)^{\frac{1}{2p}},\\
		&\widetilde{\mathbb{E}}^\varepsilon_t\left[\int_t^T |\hat{L}_s|dA_s^{2,n,+}\right]
		\leq C_p\left(\hat{\mathbb{E}}_t\left[\sup_{s\in[t,T]}|\hat{L}_s|^{2p}\right]\right)^{\frac{1}{2p}}\left(\hat{\mathbb{E}}_t\left[|A_T^{2,n,+}|^{2p}\right]\right)^{\frac{1}{2p}},\\
        &\widetilde{\mathbb{E}}^\varepsilon_t\left[\int_t^T |\hat{U}_s|dA_s^{2,n,-}\right]
		\leq C_p\left(\hat{\mathbb{E}}_t\left[\sup_{s\in[t,T]}|\hat{U}_s|^{2p}\right]\right)^{\frac{1}{2p}}\left(\hat{\mathbb{E}}_t\left[|A_T^{2,n,-}|^{2p}\right]\right)^{\frac{1}{2p}}.
	\end{align*}
By the proof of Proposition 3.3 in \cite{CT}, we have 
    \begin{align*}
        \lim_{\varepsilon\rightarrow 0}\widetilde{\E}^\varepsilon_t\left[\int_t^T |\hat{m}^\varepsilon_s|^2d\langle B\rangle_s\right]=0.
    \end{align*}
    Finally, taking conditional expectation $\widetilde{\E}^\varepsilon_t$ on both sides of \eqref{erthatYt}, letting $n\rightarrow \infty$ first and then letting $\varepsilon\rightarrow 0$, we obtain the desired result.
\end{proof}

\subsection{Approximation via penalization}

By the proof of Theorem \ref{wellposedness for RGBSDE} in \cite{CT},  for each fixed $n\in\mathbb{N}$, the solution to reflected $G$-BSDE  \eqref{penalized RGBSDE} can be approximated by the solutions to the following family of $G$-BSDEs parameterized by $m\in\mathbb{N}$:
\begin{equation}\label{Ynm}\begin{split}
		Y^{n,m}_t=&\xi+\int_t^T f(s,Y^{n,m}_s,Z^{n,m}_s)ds+\int_t^T g(s,Y^{n,m}_s,Z^{n,m}_s)d\langle B\rangle_s-\int_t^T Z_s^{n,m}dB_s\\
        &-(K_T^{n,m}-K_t^{n,m})
		+\int_t^T m(Y_s^{n,m}-L_s)^-ds-\int_t^T n(Y_s^{n,m}-U_s)^+ds.
\end{split}\end{equation}
Set $A^{n,m,+}_t=\int_0^t m(Y_s^{n,m}-L_s)^-ds$ and $A^{n,m,-}_t=\int_0^t n(Y_s^{n,m}-U_s)^+ds$. Clearly, $A^{n,m,+}$ and $A^{n,m,-}$ are non-decreasing processes and Equation \eqref{Ynm} can be written as:
\begin{equation}
	\begin{split}
		Y^{n,m}_t=&\xi+\int_t^T f(s,Y^{n,m}_s,Z^{n,m}_s)ds+\int_t^T g(s,Y^{n,m}_s,Z^{n,m}_s)d\langle B\rangle_s-\int_t^T Z_s^{n,m}dB_s\\
        &-(K_T^{n,m}-K_t^{n,m}) +(A^{n,m,+}_T-A^{n,m,+}_t)-(A^{n,m,-}_T-A^{n,m,-}_t).
\end{split}\end{equation}
Without loss of generality, in the following of this subsection, we assume that $f\equiv 0$. Moreover, in this subsection, we assume that all the conditions in Theorem \ref{main1} hold. We first establish the uniform estimate for $\{Y^{n,m}\}_{n,m=1}^\infty$, which is an extension of Lemma 3.3 in \cite{LN}.
\begin{lemma}\label{est-Ynm}
	We have
	\begin{displaymath}
		\|Y^{n,m}\|_{S^\infty_G}\leq C(M_0,N_0,L_y,L_z,T).
	\end{displaymath}
\end{lemma}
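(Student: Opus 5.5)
Since $f\equiv 0$, I plan to bound $Y^{n,m}$ from above and below by comparison with two auxiliary quadratic $G$-BSDEs that do not depend on $n,m$. The bounds then follow from Theorem \ref{comparison theorem for GBSDE} and Theorem \ref{wellposedness for GBSDE}.

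\emph{Upper bound.} Drop the term $-n\int (Y-U)^+ds$, which pushes $Y^{n,m}$ down. Replace $m(Y-L)^-$ by $m(y-N_0)^-$, using $L\le N_0$: indeed $(y-L_s)^-\le (y-N_0)^-$. Let $\bar Y^m$ solve the $G$-BSDE with terminal value $\xi$, $d\langle B\rangle$-generator $g$, and $ds$-generator $m(y-N_0)^-$. This generator is Lipschitz in $y$ and satisfies (H1)--(H3), since at $y=0$ it equals $mN_0^-$ and is bounded. Theorem \ref{comparison theorem for GBSDE} is applied with $V^1=\int m(\bar Y^m-N_0)^-ds$ and $V^2=\int [m(Y^{n,m}-L)^- - n(Y^{n,m}-U)^+]ds$. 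More precisely, I put the $y$-dependent drift terms into the $f$-slot, with $f^1(s,y)=m(y-N_0)^-\ge m(y-L_s)^--n(y-U_s)^+=f^2(s,y)$. This yields $Y^{n,m}\le \bar Y^m$. The required integrability holds because each equation has a solution in $S^\infty_G\times BMO_G$ by Theorem \ref{wellposedness for GBSDE}, and (H2) for $f^2$ follows from Assumption \ref{assLU}(2).

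The bound on $\bar Y^m$ must be uniform in $m$. For this I compare $\bar Y^m$ with $\tilde Y$, the solution of $\tilde Y_t = M_0\vee N_0 + \int_t^T \tilde g(s,\tilde Y_s,\tilde Z_s)d\langle B\rangle_s - \int_t^T\tilde Z_sdB_s - (\tilde K_T-\tilde K_t)$. Here $\tilde g(s,y,z)=M_0+L_y|y|+L_z(|z|+|z|^2)\ge g$. I expect the explicit solution $\tilde Z=0$, $\tilde K=0$, $\tilde Y_t=\varphi(t)$, where $\varphi$ solves the ODE $\varphi'=-\bar\sigma^2(M_0+L_y|\varphi|)$ with $\varphi(T)=M_0\vee N_0$; I may need to adjust by writing $d\langle B\rangle\le\bar\sigma^2dt$ through the $V$-term. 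Then $\varphi\ge N_0$, so $m(\varphi-N_0)^-=0$, and $\varphi$ is a supersolution of the $\bar Y^m$ equation. Comparison gives $\bar Y^m\le\varphi\le (M_0\vee N_0)e^{\bar\sigma^2L_yT}+C$. The lower bound is symmetric: drop $m(Y-L)^-$, replace $n(y-U)^+$ by $n(y+N_0)^+$ using $-U\le N_0$, and compare with the subsolution $-\varphi$.

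\textbf{Main obstacle.} Comparison with a deterministic function requires care with the $d\langle B\rangle$ versus $ds$ terms. $\varphi$ is defined with $dt$, but $\tilde g$ acts against $d\langle B\rangle$. I would handle this by putting the difference $\int(\bar\sigma^2 ds - d\langle B\rangle_s)\,(M_0+L_y|\varphi_s|)$ into $V^1-V^2$. This is non-decreasing in the right direction since $d\langle B\rangle\le\bar\sigma^2ds$ q.s. and the integrand is nonnegative.
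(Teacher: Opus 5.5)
Your argument is correct, but it takes a different route from the paper.

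\textbf{The paper's route.} It relies on Assumption \ref{assLU}(3). It rewrites the It\^o-type process $I$ (with $L\le I\le U$) as the solution $Y^*$ of a quadratic $G$-BSDE with extra finite-variation drifts $A^{*,\pm}$. It then introduces $Y^{\pm}$ with terminal values $U_T$, $L_T$ and drifts $\pm A^{*,\pm}$; these are bounded by Theorem \ref{wellposedness for GBSDE} after shifting by $A^{*,\pm}$. Comparison gives $Y^-\le I\le Y^+$, hence $Y^+\ge L$ and $Y^-\le U$, so the penalty terms vanish along $Y^\pm$. A second comparison then yields $Y^-\le Y^{n,m}\le Y^+$.

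\textbf{Your route.} You use only $|\xi|\le M_0$, $L\le N_0$ and $-U\le N_0$. You discard the penalty that pushes in the favourable direction and freeze the other obstacle at $\pm N_0$ in the $f$-slot. You then compare with the explicit deterministic super-/subsolution $\pm\varphi$, absorbing $d\langle B\rangle_s\le\bar\sigma^2ds$ into a non-decreasing $V$-term. Putting the penalties in the $f$-slot, as you finally do, is essential: in the $V$-slot version you first wrote, $V^1-V^2$ has no sign. Also, $\bar Y^m$ is superfluous, since $Y^{n,m}$ can be compared with $\varphi$ in one step.

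\textbf{What each route buys.} Yours shows that this lemma does not need Assumption \ref{assLU}(3) at all. It also gives a constant depending only on $M_0,N_0,L_y,T,\bar\sigma$. The paper's constant implicitly also involves $\|I\|_{S^\infty_G}$, $\|\sigma^I\|_{S^\infty_G}$, the bounds on $A^{I,\pm}_T$, and $\|L_T\|_{L^\infty_G}$, $\|U_T\|_{L^\infty_G}$. The paper's route, in turn, produces objects that are reused later. $Y^+$, $A^{*,+}$ and the inequality $Y^+\ge L$ are exactly what Lemma \ref{Ynm-U} needs to construct and uniformly bound $\hat Y^n$, so the structure of $I$ is unavoidable there anyway.

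\textbf{Two minor remarks.} First, at $y=0$ the generator $m(y-N_0)^-$ equals $mN_0^+$, not $mN_0^-$. Second, $f^2(s,0)=mL_s^--nU_s^+$ need not be bounded, since $L$ is only bounded above and $U$ only below. So (H1) for $f^2$ does not hold literally. The paper's own comparisons use the same generator, and the point is harmless. For fixed $n,m$ you have $|Y^{n,m}|\le c$ q.s., and replacing $L$ by $L\vee(-c)$ and $U$ by $U\wedge c$ leaves the equation for $Y^{n,m}$ unchanged, restores (H1) and (H2), and keeps the required ordering of the generators in both comparisons.
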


\begin{proof}
	Without loss of generality, we assume that $|A^{I,+}_T|\leq N_0$ and $|A^{I,-}_T|\leq N_0$. Set $Y^*_t=I_t$, $Z^*_t=\sigma^I_t$. It is easy to check that
	\begin{equation}\label{Y^*}\begin{split}
			Y^*_t&=I_T-\int_t^T Z^*_sd B_s+(A^{I,+}_T-A^{I,+}_t)-(A^{I,-}_T-A^{I,-}_t)\\
			&=I_T+\int_t^T g(s,Y^*_s,Z^*_s)d\langle B\rangle_s-\int_t^T Z^*_s dB_s+(A^{*,+}_T-A^{*,+}_t)-(A^{*,-}_T-A^{*,-}_t),
	\end{split}\end{equation}
	where $A^{*,+}_t=A^{I,+}_t+\int_0^t g^-(s,Y^*_s,Z^*_s)d\langle B\rangle_s$ and $A^{*,-}_t=A^{I,-}_t+\int_0^t g^+(s,Y^*_s,Z^*_s)d\langle B\rangle_s$.  Consider the following two $G$-BSDEs:
	\begin{align}
		& Y_t^+=U_T+\int_t^T g(s,Y_s^+,Z_s^+)d\langle B\rangle_s+(A^{*,+}_T-A^{*,+}_t)-\int_t^T Z_s^+ dB_s-(K^+_T-K^+_t),\label{Y^+}\\
		& Y_t^-=L_T+\int_t^T g(s,Y_s^-,Z_s^-)d\langle B\rangle_s-(A^{*,-}_T-A^{*,-}_t)-\int_t^T Z_s^- dB_s-(K^-_T-K^-_t).\label{Y^-}
	\end{align}
    In fact, $(Y^++A^{*,+},Z^+,K^+)$ (resp., $(Y^--A^{*,-},Z^-,K^-)$) can be viewed as the solution to $G$-BSDE with terminal value $\tilde{\xi}^+$ (resp., $\tilde{\xi}^-$) and generator $\tilde{g}^+$ (resp., $\tilde{g}^-$), where 
    \begin{align*}
        &\tilde{\xi}^+=U_T+A^{*,+}_T, \ \tilde{g}^+(t,y,z)=g(t,y-A^{*,+}_t,z),\\
        &\tilde{\xi}^-=L_T-A^{*,-}_T,\ \tilde{g}^-(t,y,z)=g(t,y+A^{*,-}_t,z).
    \end{align*}
    It is easy to check that $(\tilde{\xi}^+,\tilde{g}^+)$ and $(\tilde{\xi}^-,\tilde{g}^-)$ satisfy (H1)-(H3). By Theorem \ref{wellposedness for GBSDE}, we have 
    \begin{equation}\label{y+y-}\begin{split}
        &\|Y^++A^{*,+}\|_{S^\infty_G}+\|Z^+\|_{BMO_G}\leq C(M_0,N_0,L_y,L_z,T), \\
        &\|Y^--A^{*,-}\|_{S^\infty_G}+\|Z^-\|_{BMO_G}\leq C(M_0,N_0,L_y,L_z,T).
    \end{split}\end{equation}
    By Theorem \ref{comparison theorem for GBSDE}, we have for any $t\in[0,T]$, $Y_t^-\leq Y_t^*\leq Y_t^+$, which implies that $Y_t^+\geq L_t$ and $Y_t^-\leq U_t$. Therefore, we may add the terms $+\int_t^T m(Y_s^+-L_s)^-ds$ and $-\int_t^T n(Y_s^--U_s)^+ds$ into Equations \eqref{Y^+} and \eqref{Y^-}, respectively. By Theorem \ref{comparison theorem for GBSDE} again, for any $t\in[0,T]$ and $n,m$, we have $Y_t^-\leq Y^{n,m}_t\leq Y^+_t$. Recalling \eqref{y+y-} and the definition for $A^{*,+}$, $A^{*-}$, we have 
	 $$\|Y^{n,m}\|_{S^\infty_G}\leq C(M_0,N_0,L_y,L_z,T).$$
\end{proof}

The following lemma provides the explicit convergence rate for $(Y^{n,m}-U)^+$, which is of vital importance in proving several fundamental estimates, such as the uniform estimates for the $G$-BMO norm of $Z^{n,m}$ and the convergence rate of $(\bar{Y}^n-U)^+$. The two estimates mentioned above are the challenging problems in doubly reflected $G$-BSDEs. Actually, for the lower obstacle case, i.e., $U\equiv +\infty$, the parameter $n$ has no effect in \eqref{Ynm}. We may drop it in the superscript. In this case, the uniform estimates of the $G$-BMO norm of $\{Z^m\}$ can be obtained naturally using the uniform estimate for $\{Y^m\}$ in Lemma \ref{est-Ynm} and Proposition \ref{est-Z-A} since $A^{m,+}-K^m$ is nondecreasing and $A^{m,-}\equiv 0$. However, this method is invalid for the doubly reflected case (actually, it is even invalid for the upper obstacle case). Second, if we only consider the penalized reflected $G$-BSDEs \eqref{penalized RGBSDE}, it is difficult to obtain the  convergence rate of $(\bar{Y}^n-U)^+$ since $\bar{A}^n$ is not a non-increasing $G$-martingale. 

\begin{lemma}\label{Ynm-U}
    There exists a constant $C$ independent of $n,m$, such that
	\begin{displaymath}
		n\sup_{t\in[0,T]}|(Y_t^{n,m}-U_t)^+|\leq C. 
	\end{displaymath}
\end{lemma}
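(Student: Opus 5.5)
Our plan is to compare $Y^{n,m}$ with an explicit supersolution built from the upper obstacle, using Assumption \ref{assLU}(4). Set $\tilde U_t:=U_t+A^{I,+}_t$, which by \eqref{A3} satisfies $d\tilde U_t=b(t)dt+\sigma(t)dB_t+dK^u_t$. Penalization shows that $Y^{n,m}$ may exceed $U$ only by an amount of order $1/n$, so we look for a process of the form $\bar Y^{n}_t:=U_t+\frac{C}{n}$, or more precisely $U_t+\frac{1}{n}\Phi_t$ with a bounded correction $\Phi$, and aim to show it dominates $Y^{n,m}$ via Theorem \ref{comparison theorem for GBSDE}. Since the claimed bound is uniform in $t$, it is cleaner to use a localized version: for $\Lambda_t:=n(Y^{n,m}_t-U_t)^+$ we want $\Lambda_t\le C$ q.s. for every $t$.

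First, we rewrite the dynamics of $U$ as a $G$-BSDE with the same generator. From \eqref{A3}, $U_t=U_T-\int_t^T b\,ds-\int_t^T\sigma dB_s-(K^u_T-K^u_t)+(A^{I,+}_T-A^{I,+}_t)$. For $V_t:=U_t+\frac{c}{n}$, with $c$ to be chosen, we get
\[
V_t=U_T+\tfrac{c}{n}+\int_t^T g(s,V_s,\sigma_s)d\langle B\rangle_s-\int_t^T\sigma_sdB_s-(K^u_T-K^u_t)+\int_t^T\Big(-b_s\,ds-g(s,V_s,\sigma_s)d\langle B\rangle_s\Big)+(A^{I,+}_T-A^{I,+}_t).
\]
Since $\sigma,b\in S^\infty_G$ and $U$ is bounded from below by $-N_0$ (and $V$ lies within the range where $g$ is bounded by (H1),(H3) whenever $Y^{n,m}$ is near $V$), the drift $-b-g(\cdot,V,\sigma)\frac{d\langle B\rangle}{ds}$ is bounded by a constant $C_0$ independent of $n,m$. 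Adding $\int_t^T m(V_s-L_s)^-ds=0$, because $V\ge U\ge L$, and $-\int_t^Tn(V_s-U_s)^+ds=-c(T-t)$, we see that $V$ is a supersolution of the penalized equation \eqref{Ynm} provided $c\ge C_0$ (the term $-c\,ds$ is absorbed by the bounded drift) and $A^{I,+}$ enters with the right sign. The non-increasing martingale $K^u$ has the same sign structure as $K^{n,m}$, so it is admissible in Theorem \ref{comparison theorem for GBSDE} after being moved into the $K$ slot; the difference $V^1-V^2$ of finite-variation parts is then $\int (c-C_0\text{-type drift})ds+A^{I,+}$, which is non-decreasing. The terminal condition is fine since $\xi\le U_T<U_T+c/n$.

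The main obstacle is that the comparison theorem requires the generator in both equations to be the same nonlinear function of $(y,z)$, with the penalization terms appearing as part of the generator, whereas I am treating $V$ via a fixed drift. I would therefore include $m(y-L_s)^--n(y-U_s)^+$ into the generators, which is Lipschitz in $y$ but with constant depending on $n,m$. To use Theorem \ref{comparison theorem for GBSDE} (whose proof via linearization and $G$-Girsanov only needs monotonicity in the $y$-direction with bounded $y$-coefficient on bounded sets), I would verify that the comparison argument of \cite{CT} extends to generators that are Lipschitz in $y$ with arbitrary constants. This extension holds since the comparison conclusion does not depend quantitatively on $L_y$. A further subtlety is the requirement that $V\in S^\infty_G$ and $\sigma\in BMO_G$: these follow from the boundedness of $\sigma$ and from Lemma \ref{est-Ynm} together with the upper bound $U\le$ const obtainable from $I\le U$ and $U\in S^\infty$ near the solution. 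If $U$ is unbounded above, I would replace $V$ by $\min(V,\,\|Y^+\|_{S^\infty_G}+1)$ using the bound from Lemma \ref{est-Ynm}, or truncate $g$ in $y$ outside the a priori range. Concluding, we get $Y^{n,m}_t\le U_t+c/n$ for all $t$, hence $n(Y^{n,m}_t-U_t)^+\le c$, and by continuity this bound holds for the supremum over $t$.
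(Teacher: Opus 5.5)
Your route differs from the paper's and can be completed, but it rests on a fact you did not establish: that $U$ is bounded above. You need it twice. First, to get $|g(s,V_s,\sigma_s)|\le C_0$, so that $\int_0^t(c-b_s)\,ds-\int_0^t g(s,V_s,\sigma_s)\,d\langle B\rangle_s+A^{I,+}_t$ is non-decreasing. Second, to have $V=U+c/n\in S^\infty_G$, as Theorem \ref{comparison theorem for GBSDE} requires.

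The reason you give ($I\le U$) yields only a lower bound, and neither fallback closes the gap as stated. Truncating $g$ in $y$ bounds the drift but leaves $V\in S^\infty_G$ unverified. The process $\min(V,R)$ is not covered by the comparison theorem: it would need a $G$-Tanaka argument, and the constant $R$ is not a supersolution where $U\ge R$ and $g(\cdot,R,0)>0$.

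The missing step is short. By \eqref{A3}, $M_t:=U_t+A^{I,+}_t-\int_0^t b(s)\,ds=U_0+\int_0^t\sigma(s)\,dB_s+K^u_t$ is a $G$-martingale, because the symmetric integral has no mean uncertainty. Hence $M_t=\hat{\mathbb{E}}_t[M_T]$ with $|M_T|\le \|U_T\|_{L^\infty_G}+\|A^{I,+}_T\|_{L^\infty_G}+T\|b\|_{S^\infty_G}$, so $|U_t|\le \|U_T\|_{L^\infty_G}+2\|A^{I,+}_T\|_{L^\infty_G}+2T\|b\|_{S^\infty_G}$.

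Two minor remarks:
\begin{itemize}
\item Your concern about Lipschitz constants depending on $n,m$ is unfounded. (H3) only asks for some constants, and $m(y-L_s)^--n(y-U_s)^+$ satisfies (H1)--(H3) because $L\le N_0$, $U\ge -N_0$ and $L,U$ are uniformly continuous. The paper uses the comparison theorem in exactly this way in Lemma \ref{est-Ynm}.
\item $C_0$ depends on $c$ through $L_y|V_s|\le L_y(\|U\|_{S^\infty_G}+c/n)$. So fix $c$ for $n\ge 2\bar{\sigma}^2L_y$ and treat the finitely many remaining $n$ with Lemma \ref{est-Ynm}.
\end{itemize}

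With that in place, your argument is a legitimate and shorter alternative. The paper instead introduces the auxiliary equation \eqref{hatY^n}: terminal value $U_T$, only the upper penalization, and pushed up by $A^{*,+}$. It shows $\hat Y^n\ge I\ge L$, hence $\hat Y^n\ge Y^{n,m}$ by comparison. It then bounds $n(\hat Y^n-U)^+$ in three moves:
\begin{itemize}
\item linearize $g$ around $(0,\sigma)$;
\item change measure through the $G$-BMO generator $\tilde b^{n,\varepsilon}$ and $G$-Girsanov;
\item apply It\^{o}'s formula to $e^{-nt}(\hat Y^n_t-U_t)$, which gives $C\int_t^Te^{-n(s-t)}ds\le C/n$.
\end{itemize}
That route never uses an upper bound on $U$; the drift is controlled through the bound on $\hat Y^n$ inherited from $Y^+$. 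The cost is the BMO/Girsanov machinery and the extra equation.

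Your explicit supersolution $U+c/n$ needs only \eqref{A3} and a single comparison. For $n\ge 2\bar\sigma^2L_y$ it gives the explicit constant $c=2\big(\|b\|_{S^\infty_G}+\bar\sigma^2(M_0+L_y\|U\|_{S^\infty_G}+L_z(1+\|\sigma\|_{S^\infty_G})\|\sigma\|_{S^\infty_G})\big)$, valid once the boundedness of $U$ is recorded.
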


\begin{proof}
	Consider the following $G$-BSDE:
	\begin{equation}\begin{split}\label{hatY^n}
			\hat{Y}^n_t=&U_T+\int_t^T g(s,\hat{Y}^n_s,\hat{Z}^n_s)d\langle B\rangle_s-\int_t^T n(\hat{Y}^n_s-U_s)^+ds\\ &+(A_T^{*,+}-A_t^{*,+})-\int_t^T \hat{Z}^n_sd B_s-(\hat{K}^n_T-\hat{K}^n_t),
	\end{split}\end{equation}
	where $A^{*,+}$ is the same as in the proof of Lemma \ref{est-Ynm}. For each $n$, $(\tilde{Y}^n,\hat{Z}^n,\hat{K}^n)$ can be viewed as the solution to $G$-BSDE with terminal value $\tilde{\xi}^+$ and generators $\tilde{f}^n$, $\tilde{g}^{+}$, where $\tilde{Y}^n_t=\hat{Y}^n_t+A^{*,+}_t$ and
    \begin{align*}
        \tilde{f}^{n}(t,y,z)=-n(y-A^{*,+}_t-U_t)^-.
    \end{align*}
     Here, $\tilde{\xi}^+$ and $\tilde{g}^+$ are the same as in the proof of Lemma \ref{est-Ynm}. It is easy to check that $(\tilde{\xi}^+,\tilde{f}^{n},\tilde{g}^+)$ satisfies (H1)-(H3). By Theorem \ref{wellposedness for GBSDE}, for any $n\geq1$, we have 
    \begin{align}\label{tildeYn hatKn}
        (\tilde{Y}^n,\hat{Z}^n,\hat{K}^n_T)\in S^\infty_G(0,T)\times BMO_G\times\bigcap_{p\geq 1}L^p_G(\Omega_T).
    \end{align}
    Consequently,
    \begin{align*}
        (\hat{Y}^n,\hat{Z}^n,\hat{K}^n_T,A^{*,+})\in S^\infty_G(0,T)\times BMO_G\times\bigcap_{p\geq 1}L^p_G(\Omega_T)\times \bigcap_{p\geq 1}S^p_G(0,T).
    \end{align*}

    Noting that $Y^*_t=I_t\leq U_t$, we may add the $-\int_t^T n(Y^*_s-U_s)^+ds$ term into Equation \eqref{Y^*}. By Theorem \ref{comparison theorem for GBSDE}, we have $ \hat{Y}^n_t\geq Y^*_t$ and hence $\hat{Y}^n_t\geq L_t$ for any $n\in\mathbb{N}$ and $t\in[0,T]$. Therefore, we may add the $+\int_t^T m(\hat{Y}^n_s-L_s)^-ds$ term into Equation \eqref{hatY^n}. Applying Theorem \ref{comparison theorem for GBSDE} again implies $\hat{Y}^n_t\geq Y^{n,m}_t$. Therefore, it suffices to prove that there exists a constant $C$ independent of $n,m$, such that
	\begin{displaymath}
		n(\hat{Y}^n_t-U_t)^+\leq C.
	\end{displaymath}

    For each given $\varepsilon>0$, let $l$ be a Lipschitz continuous function such that $I_{[-\varepsilon,\varepsilon]}(x)\leq l(x)\leq I_{[-2\varepsilon,2\varepsilon]}(x)$ for $x\in\mathbb{R}$. For any $s\in[0,T]$, set 
    \begin{align*}
        \tilde{a}^{n,\varepsilon}_s&:=[1-l(\tilde{Y}^n_s)]\frac{\tilde{g}^+(s,\tilde{Y}^n_s,\hat{Z}^n_s)-\tilde{g}^+(s,0,\hat{Z}^n_s)}{\tilde{Y}^{n}_s}I_{\{|\tilde{Y}^n_s|>0\}},\\
         \tilde{b}^{n,\varepsilon}_s&:=[1-l(\bar{Z}^n_s)]\frac{\tilde{g}^+(s,0,\hat{Z}^n_s)-\tilde{g}^+(s,0,\sigma(s))}{|\bar{Z}^{n}_s|^2}\bar{Z}^n_sI_{\{|\bar{Z}^n_s|>0\}},\\
        \tilde{m}^{n,\varepsilon}_s&:=l(\tilde{Y}^n_s)[\tilde{g}^+(s,\tilde{Y}^n_s,\hat{Z}^n_s)-\tilde{g}^+(s,0,\hat{Z}^n_s)]+l(\bar{Z}^n_s)[\tilde{g}^+(s,0,\hat{Z}^n_s)-\tilde{g}^+(s,0,\sigma(s))],
    \end{align*}
    where $\bar{Z}^n_s=\hat{Z}^n_s-\sigma(s)$. It is easy to check that 
    \begin{align*}
        |\tilde{a}^{n,\varepsilon}_s|\leq L_y, \ |\tilde{b}^{n,\varepsilon}_s|\leq L_z(1+|\hat{Z}^n_s|+|\sigma(s )|), \ |\tilde{m}^{n,\varepsilon}_s|\leq 2\varepsilon(L_y+L_z(1+2\varepsilon+2|\sigma(s)|)).
    \end{align*}
    Moreover, by Lemma 3.6 in \cite{HLSH}, $\tilde{b}^{n,\varepsilon}\in H^2_G(0,T)$ is a $G$-BMO martingale generator. Therefore, we may define a new conditional $G$-expectation $\widetilde{\mathbb{E}}^{n,\varepsilon}_t$ as follows
    \begin{align*}
        \widetilde{\mathbb{E}}^{n,\varepsilon}_t[X]:=\hat{\E}_t\left[\frac{\mathcal{E}(\tilde{b}^{n,\varepsilon})_T}{\mathcal{E}(\tilde{b}^{n,\varepsilon})_t}X\right].
    \end{align*}
    By Lemma \ref{lem2.6 2.7}, $\widetilde{B}^{n,\varepsilon}$ is a $G$-Brownian motion under $\widetilde{\E}^{n,\varepsilon}$, where $\widetilde{B}^{n,\varepsilon}_t=B_t-\int_0^t \tilde{b}^{n,\varepsilon}_sd\langle B\rangle_s$. Set $\tilde{U}_t=U_t+A^{*,+}_t$ and  $\bar{Y}^n_t=\tilde{Y}^n_t-\tilde{U}_t(=\hat{Y}^n_t-U_t)$. It is easy to check that  
	\begin{align*}
		\bar{Y}^n_t=&\tilde{\xi}^+-\tilde{U}_T+\int_t^Tb(s)ds+\int_t^T (\tilde{g}^+(s,0,\sigma(s))+\tilde{m}^{n,\varepsilon}_s+\tilde{a}^{n,\varepsilon}_s\tilde{Y}^n_s)d\langle B\rangle_s-\int_t^T \bar{Z}^n_sd\widetilde{B}^{n,\varepsilon}_s \notag\\
       & -\int_t^T g^-(s,I_s,\sigma^I_s)d\langle B\rangle_s-\int_t^T n(\bar{Y}^n_s)^+ds-(\hat{K}^n_T-\hat{K}_t^n)+(K^u_T-K^u_t).
	\end{align*}
    Applying It\^{o}'s formula to $e^{-nt}\bar{Y}^n_t$, we have
    \begin{equation}\label{entbaryn}\begin{split}
        &e^{-nt}\bar{Y}^n_t+\int_t^T e^{-ns}\bar{Z}^n_sd\widetilde{B}^{n,\varepsilon}_s+\int_t^T e^{-ns}d\hat{K}^n_s\\
        =&e^{-nT}(\tilde{\xi}^+-\tilde{U}_T)+\int_t^T e^{-ns}b(s)ds+n\int_t^T e^{-ns}(\bar{Y}^n_s-(\bar{Y}^n_s)^+)ds+\int_t^T e^{-ns}dK^u_s\\
        &+\int_t^Te^{-ns}\left(\tilde{g}^+(s,0,\sigma(s))+\tilde{m}^{n,\varepsilon}_s+\tilde{a}^{n,\varepsilon}_s\tilde{Y}^n_s-g^-(s,I_s,\sigma^I_s)\right)d\langle B\rangle_s\\
        \leq&\int_t^T e^{-ns}|b(s)|ds+\int_t^Te^{-ns}\left(2|g(s,0,0)|+L_z(1+|\sigma^I_s|)|\sigma^I_s|+L_z(1+|\sigma(s)|)|\sigma(s)|\right)d\langle B\rangle_s\\
        &+\int_t^Te^{-ns}\left(L_y\left(|A^{*,+}_s|+|I_s|+|\tilde{Y}^n_s|\right)+2\varepsilon\left(L_y+L_z(1+2\varepsilon+2|\sigma(s)|)\right)\right)d\langle B\rangle_s.
    \end{split}\end{equation}
     By Theorem \ref{comparison theorem for GBSDE}, for any $n\geq 1$ and any $t\in[0,T]$, we have $Y^+_t\geq \hat{Y}^n_t\geq Y^{n,m}_t$. By Equation \eqref{y+y-} and Lemma \ref{est-Ynm}, there exists a constant $C$ independent of $n$, such that 
    \begin{align*}
        \|\hat{Y}^n\|_{S^\infty_G}\leq C.
    \end{align*}
    Recalling that $\tilde{Y}^n_t=\hat{Y}^n_t+A^{*,+}_t$, it follows that 
    \begin{align*}
        \|\tilde{Y}^n\|_{S^\infty_G}\leq C.
    \end{align*}
 Recalling Equation \eqref{tildeYn hatKn}, by Lemma \ref{lem2.6 2.7}, for each $n\in\mathbb{N}$, $\hat{K}^n$ is a non-increasing $G$-martingale under $\widetilde{\E}^{n,\varepsilon}$. Then, by Lemma 3.4 in \cite{HJPS1}, $\{\int_0^t e^{-ns}d\hat{K}^n_s\}_{t\in[0,T]}$ is a non-increasing $G$-martingale under $\widetilde{\E}^{n,\varepsilon}$. Taking conditional expectations under $\widetilde{\E}^{n,\varepsilon}$ on both sides of \eqref{entbaryn}, we have
    \begin{align*}
        \bar{Y}^n_t\leq &\int_t^T e^{-n(s-t)}ds \Big[\|b\|_{S^\infty_G}+\bar{\sigma}^2\Big(2M_0+L_y\left(\|I\|_{S^\infty_G}+\|\tilde{Y}^n\|_{S^\infty_G}+\|A^{*,+}_T\|_{L^\infty_G}\right)\\
        &+L_z(1+\|\sigma\|_{S^\infty_G})\|\sigma\|_{S^\infty_G}+L_z(1+\|\sigma^I\|_{S^\infty_G})\|\sigma^I\|_{S^\infty_G}+2\varepsilon\left(L_y+L_z\left(1+2\varepsilon+2\|\sigma\|_{S^\infty_G}\right)\right)\Big)\Big].
        \end{align*}
    Then, we obtain the desired result.
\end{proof}

Now, we are ready to provide uniform estimates for $\{Z^{n,m}\}$, $\{A^{n,m,+}\}$ and $\{K^{n,m}\}$ under appropriate norm, respectively. 
\begin{lemma}\label{est-Znm-Anm}
   For any $\alpha\geq 1$, There exist two constants $C$, $C_\alpha$ independent of $m,n$, such that 
    \begin{align*}
        \|A^{n,m,-}\|_{L^\infty_G}\leq C,  \ \|Z^{n,m}\|_{BMO_G}\leq C, \  \hat{\E}[|A^{n,m,+}_T|^\alpha]\leq C_\alpha, \ \hat{\E}[|K^{n,m}_T|^\alpha]\leq C_\alpha.
    \end{align*}
\end{lemma}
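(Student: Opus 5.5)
The plan is to read off the $L^\infty_G$ bound for $A^{n,m,-}$ directly from Lemma \ref{Ynm-U}, and then to obtain everything else from Proposition \ref{est-Z-A}(1).

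\textbf{Step 1: the bound on $A^{n,m,-}$.} By Lemma \ref{Ynm-U}, $n(Y^{n,m}_s-U_s)^+\leq C$ q.s. for every $s$, with $C$ independent of $n,m$. Integrating in $s$ gives
\begin{align*}
A^{n,m,-}_T=\int_0^T n(Y^{n,m}_s-U_s)^+ds\leq CT,
\end{align*}
so $\|A^{n,m,-}_T\|_{L^\infty_G}\leq CT$. Since $A^{n,m,-}$ is non-decreasing, this also bounds $\sup_t A^{n,m,-}_t$.

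\textbf{Step 2: putting \eqref{Ynm} in the form of Proposition \ref{est-Z-A}.} We have assumed $f\equiv0$. Rewrite \eqref{Ynm} as
\begin{align*}
Y^{n,m}_t=\xi+\int_t^T g(s,Y^{n,m}_s,Z^{n,m}_s)d\langle B\rangle_s-\int_t^T Z^{n,m}_sdB_s+(A_T-A_t)-(A'_T-A'_t),
\end{align*}
with $A:=A^{n,m,+}-K^{n,m}$ and $A':=A^{n,m,-}$.
\begin{itemize}
\item $A$ is continuous and non-decreasing with $A_0=0$, because $K^{n,m}$ is a continuous non-increasing $G$-martingale with $K^{n,m}_0=0$.
\item $Y^{n,m}\in S^\infty_G(0,T)$ by Lemma \ref{est-Ynm}, with a bound uniform in $n,m$.
\item $Z^{n,m}\in H^2_G(0,T)$ by Theorem \ref{wellposedness for GBSDE}, applied for fixed $n,m$ with the Lipschitz penalization terms absorbed into the generator.
\item $\|A'_T\|_{L^\infty_G}$ is bounded uniformly by Step 1.
\end{itemize}

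\textbf{Step 3: applying Proposition \ref{est-Z-A}(1).} The proposition gives $\|Z^{n,m}\|_{BMO_G}\leq C_1$ and $\hat{\E}[|A^{n,m,+}_T-K^{n,m}_T|^\alpha]\leq C_2$. The constants depend only on $\|Y^{n,m}\|_{S^\infty_G}$, $\|A^{n,m,-}_T\|_{L^\infty_G}$, $T,L_y,L_z,M_0,\bar\sigma$ (and $\alpha$), hence are independent of $n,m$. Since $A^{n,m,+}_T\geq0$ and $-K^{n,m}_T\geq0$, each of the two summands is bounded by their sum. Therefore
\begin{align*}
\hat{\E}[|A^{n,m,+}_T|^\alpha]+\hat{\E}[|K^{n,m}_T|^\alpha]\leq 2\hat{\E}[|A^{n,m,+}_T-K^{n,m}_T|^\alpha]\leq 2C_2,
\end{align*}
which completes the estimates.

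\textbf{Main obstacle.} The only delicate point is the uniform $L^\infty_G$ control of the upper penalization $A^{n,m,-}$, and this was already provided by Lemma \ref{Ynm-U}. Without it, Proposition \ref{est-Z-A} could not be applied, since neither of the two competing non-decreasing processes would be bounded. A minor point to verify is that the proof of Proposition \ref{est-Z-A} only uses the $L^\infty$ bound on $A'_T$ through $\E^\P_\tau[\int_\tau^T e^{-aY_s}dA'_s]$, so it applies verbatim here.
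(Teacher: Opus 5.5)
Your proposal is correct and follows the paper's proof. The bound on $A^{n,m,-}$ comes from Lemma \ref{Ynm-U}, and the remaining estimates come from Proposition \ref{est-Z-A}(1), applied with $A=A^{n,m,+}-K^{n,m}$ and $A'=A^{n,m,-}$, together with the uniform bound of Lemma \ref{est-Ynm}; you simply spell out the details the paper leaves implicit.
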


\begin{proof}
    The first estimate is a direct consequence of Lemma \ref{Ynm-U}. By Proposition \ref{est-Z-A} and Lemma \ref{est-Ynm}, we obtain the remaining estimates. 
\end{proof}

By a similar analysis as the proof of Lemma 4.2, Lemma 4.3 and Theorem 5.1 in \cite{CT}, for any fixed $n$ and $\alpha\geq 2$, we have
\begin{description}
	\item[(a)] $\lim_{m\rightarrow\infty}\hat{\mathbb{E}}\left[\sup_{t\in[0,T]}|(Y^{n,m}_t-L_t)^-|^\alpha\right]=0$;
	\item[(b)]$(Y^{n,m},Z^{n,m},A^{n,m,+}-K^{n,m})$ converges to $(\bar{Y}^n,\bar{Z}^n,\bar{A}^n)\in S^\infty_G(0,T)\times BMO_G\times \cap_{\alpha\geq2}S^\alpha_G(0,T)$ in the following sense
	\begin{equation}\label{convergence}\begin{split}
		&\lim_{m\rightarrow\infty}\hat{\mathbb{E}}\left[\sup_{t\in[0,T]}|\bar{Y}^n_t-Y^{n,m}_t|^\alpha\right]=0, \\ &\lim_{m\rightarrow\infty}\hat{\mathbb{E}}\left[\left(\int_0^T|\bar{Z}^n_t-Z^{n,m}_t|^2dt\right)^{\alpha/2}\right]=0, \\ &\lim_{m\rightarrow\infty}\hat{\mathbb{E}}\left[\sup_{t\in[0,T]}|\bar{A}^n_t-(A^{n,m,+}_t-K^{n,m}_t)|^\alpha\right]=0.
	\end{split}\end{equation}
    Moreover, $(\bar{Y}^n,\bar{Z}^n,\bar{A^n})$ solves the following reflected $G$-BSDE
    \begin{equation}\label{barY^n}
	\begin{cases}
		\bar{Y}^n_t=\xi+\int_t^T g(s,\bar{Y}^n_s,\bar{Z}^n_s)d\langle B\rangle_s-n\int_t^T(\bar{Y}^n_s-U_s)^+ds-\int_t^T \bar{Z}^n_sdB_s+(\bar{A}^n_T-\bar{A}^n_t), \\
		\bar{Y}^n_t\geq L_t, \ \forall t\in[0,T],\\
        \{-\int_0^t (\bar{Y}^n_s-L_s)d\bar{A}^n_s\}_{t\in[0,T]} \textrm{ is a non-increasing $G$-martingale}.
	\end{cases}
\end{equation}
    \end{description}

    The following result is an extension of Lemma 3.6 in \cite{LN}, which provides the uniform estimates for $\{\bar{Y}^n\}$, $\{\bar{Z}^n\}$ and $\{\bar{A}^n\}$ under suitable norm and the explicit convergence rate for $(\bar{Y}^n-U)^+$. 
    \begin{lemma}\label{est-barYn-barZn}
       For any $\alpha\ge 2$, there exist two constant $C,C_\alpha$ independent of $n$, such that 
       \begin{align*}
           &\|\bar{Y}^n\|_{S^\infty_G}\leq C, \ n\sup_{t\in[0,T]}(\bar{Y}^n_t-U_t)^+ \leq C,\\
           &\hat{\E}[|\bar{A}^n_T|^\alpha]\leq C_\alpha, \  \|\bar{Z}^n\|_{BMO_G}\leq C.
       \end{align*}
    \end{lemma}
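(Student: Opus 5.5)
The plan is to pass to the limit $m\to\infty$ in the uniform estimates already obtained for the doubly penalized equation \eqref{Ynm}, and then to recover the estimates for $\bar{Z}^n$ and $\bar{A}^n$ from Proposition \ref{est-Z-A}.

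\emph{Bounds on $\bar Y^n$.} By Lemma \ref{est-Ynm}, $|Y^{n,m}_t|\leq C$ q.s. with $C$ independent of $n,m$. By Lemma \ref{Ynm-U}, $n(Y^{n,m}_t-U_t)^+\leq C$ q.s. for all $t$, again uniformly in $n,m$. Property (b) gives $\hat{\E}[\sup_t|\bar{Y}^n_t-Y^{n,m}_t|^\alpha]\to 0$ as $m\to\infty$. Hence, along a subsequence, $\sup_t|\bar Y^n_t-Y^{n,m}_t|\to 0$ quasi-surely. (Alternatively, Markov's inequality shows that the set where the bound fails by more than $\delta$ has capacity zero.) Both pointwise bounds therefore pass to the limit. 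Since $x\mapsto (x-U_t)^+$ is $1$-Lipschitz, we obtain $\|\bar{Y}^n\|_{S^\infty_G}\leq C$ and $n\sup_{t}(\bar{Y}^n_t-U_t)^+\leq C$, with the same constants.

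\emph{Bounds on $\bar Z^n$ and $\bar A^n$.} Write \eqref{barY^n} in the form treated by Proposition \ref{est-Z-A}(1), taking $A:=\bar{A}^n$ and $A'_t:=n\int_0^t(\bar{Y}^n_s-U_s)^+ds$. The process $A'$ is continuous and nondecreasing, and by the previous step $A'_T\leq CT$, so $\|A'_T\|_{L^\infty_G}$ is bounded independently of $n$.

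The point requiring care is that Proposition \ref{est-Z-A} assumes $A$ is nondecreasing, and $\bar{A}^n$ is the limit of $A^{n,m,+}-K^{n,m}$. Each $A^{n,m,+}-K^{n,m}$ is nondecreasing, and a uniform-in-$t$ limit in $L^\alpha$ of nondecreasing processes is nondecreasing q.s., so the assumption holds. In any case, $\bar A^n$ is the nondecreasing reflection process of the lower-obstacle reflected $G$-BSDE from Theorem \ref{wellposedness for RGBSDE}.

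Proposition \ref{est-Z-A}(1), with $\|\bar Y^n\|_{S^\infty_G}$ and $\|A'_T\|_{L^\infty_G}$ both bounded uniformly, then yields $\|\bar{Z}^n\|_{BMO_G}\leq C$ and $\hat{\E}[|\bar{A}^n_T|^\alpha]\leq C_\alpha$, uniformly in $n$.

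\emph{Alternative route and main difficulty.} One could instead transfer the bounds of Lemma \ref{est-Znm-Anm} directly. The $L^\alpha$ bound on $\bar A^n_T$ follows from \eqref{convergence} together with the bounds on $A^{n,m,+}_T$ and $K^{n,m}_T$. Transferring the BMO bound is harder, since BMO norms are only lower semicontinuous under the convergence in \eqref{convergence}. For this reason I would use Proposition \ref{est-Z-A} as the main argument for $\bar Z^n$. With that choice, the only real difficulty is the quasi-sure passage to the limit of the uniform bound $n(Y^{n,m}-U)^+\leq C$, which is handled through the subsequence argument in the first step.
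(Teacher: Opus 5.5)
Your proof is correct and follows the paper's: the bounds on $\|\bar Y^n\|_{S^\infty_G}$ and $n\sup_t(\bar Y^n_t-U_t)^+$ come from Lemmas \ref{est-Ynm} and \ref{Ynm-U}, passed to the limit $m\to\infty$ along an almost surely convergent subsequence (the paper extracts it under each $\P\in\mathcal{P}$, you do it quasi-surely via the capacity, which is equivalent), and the BMO bound on $\bar Z^n$ comes from Proposition \ref{est-Z-A} with $A'_t=n\int_0^t(\bar Y^n_s-U_s)^+ds$. The only variation is that the paper gets $\hE[|\bar A^n_T|^\alpha]\le C_\alpha$ from \eqref{convergence} and Lemma \ref{est-Znm-Anm} (your ``alternative route'') rather than from Proposition \ref{est-Z-A}(1); also, lower semicontinuity of the BMO norm is exactly what would make a direct transfer of the BMO bound work, as in Step 2 of the proof of Theorem \ref{main1}, so it is not an obstruction.
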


    \begin{proof}
      The first two estimates can be proved similarly as the proof of Theorem 5.1 in \cite{CT}. For readers' convenience, we give a short proof here for the first estimate. Recalling Theorem \ref{the1.1} and Equation \eqref{convergence}, for each $\P\in\mathcal{P}$, the sequence $\{\sup_{t\in[0,T]}|Y^{n,m}_t|\}_{m=1}^\infty$ converges in probability $\P$ to $\sup_{t\in[0,T]}|\bar{Y}^n_t|$. Then, there exists a subsequence $\{\sup_{t\in[0,T]}|Y^{n,m_k}_t|\}_{k=1}^\infty$ such that 
      \begin{align*}
          \lim_{k\rightarrow \infty}\sup_{t\in[0,T]}|Y^{n,m_k}_t|=\sup_{t\in[0,T]}|\bar{Y}^n_t|, \  \P\textrm{-a.s.}
      \end{align*}
      Recalling Lemma \ref{est-Ynm}, there exists a constant $C$ independent of $n,\P$, such that $\sup_{t\in[0,T]}|\bar{Y}^n_t|\leq C$, $\P$-a.s. for each $\P\in\mathcal{P}$, which indicates that $\sup_{t\in[0,T]}|\bar{Y}^n_t|\leq C$, q.s. and the first estimate follows. 
      
      The third estimate is a direct consequence of Equation \eqref{convergence} and Lemma \ref{est-Znm-Anm}. The last one can be obtained by applying Proposition \ref{est-Z-A} and the first two estimates. 
    \end{proof}

    \subsection{Proof of Theorem \ref{main1}}

    In this subsection, we will prove Theorem \ref{main1} as follows.

    \textbf{Step 1.} Show the uniqueness of the solution.

    \textbf{Step 2.} Show the doubly reflected $G$-BSDEs has a solution $(Y,Z,A)$, which is the limit of $(Y^n,Z^n,A^n)$, where 
    \begin{align*}
        Y^n=Y^{n,n},\  Z^n=Z^{n,n}, \ K^n=K^{n,n}, \ A^{n,+}=A^{n,n,+},  \ A^{n,-}=A^{n,n,-}
    \end{align*}
    and 
    \begin{align*}
        A^n=A^{n,-}-K^n-A^{n,+}.
    \end{align*}

    \textbf{Step 3.} Show that for any $\alpha\geq 2$, 
    \begin{align}\label{Yn barYn}
        \lim_{n\rightarrow\infty}\hat{\mathbb{E}}\left[\sup_{t\in[0,T]}|Y^n_t-\bar{Y}^n_t|^\alpha\right]=0.
    \end{align}

\begin{proof}[Proof of Step 1]
    Let $(Y^i,Z^i,A^i)$, $i=1,2$ be the solutions to the doubly reflected $G$-BSDE with parameters $(\xi,g,L,U)$. By Proposition \ref{est-Y1-Y2}, we conclude that $Y^1\equiv Y^2$. Applying It\^{o}'s formula to $(Y^1_t-Y^2_t)^2$, for any $\tau\in\mathcal{T}_0^T$ and $\P\in\mathcal{P}$, the following equation holds $\P$-a.s.,  
 \begin{align*}
     \int_\tau^T |Z^1_s-Z^2_s|^2d\langle B\rangle_s=&-(Y^1_\tau-Y^2_\tau)^2+\int_\tau^T (Y^1_s-Y^2_s)(g(s,Y^1_s,Z^1_s)-g(s,Y^2_s,Z^2_s))d\langle B\rangle_s\\
     &-\int_\tau^T 2(Y^1_s-Y^2_s)(Z^1_s-Z^2_s)dB_s+\int_\tau^T 2(Y^1_s-Y^2_s)d(A^1_s-A^2_s).
 \end{align*}
 Using the fact that $Y^1\equiv Y^2$,  it is easy to check that 
 \begin{align*}
     \|Z^1_s-Z^2_s\|_{BMO_G}=0.
 \end{align*}
 Since for $i=1,2$,
 \begin{align*}
     A^i_t=Y^i_0-Y^i_t-\int_0^t g(s,Y^i_s,Z^i_s)d\langle B\rangle_s+\int_0^t Z^i_s dB_s,
 \end{align*}
 For any $\alpha\geq 2$, applying the continuity property of $g$, the H\"{o}lder inequality and Lemma \ref{lem2.2}, there exists a constant $C$ depending on $\alpha,L_y,L_z,T$, such that 
 \begin{align*}
     \hE\left[\sup_{t\in[0,T]}|A^1_t-A^2_t|^\alpha\right]\leq& C\Bigg(\hE\left[\sup_{t\in[0,T]}|Y^1_t-Y^2_t|^\alpha\right]+\hE\left[\left(\int_0^T |Z^1_s-Z^2_s|^2d\langle B\rangle_s\right)^{\frac{\alpha}{2}}\right]\\
     &+\hE\left[\left(\int_0^T \left(|Y^1_s-Y^2_s|+(1+|Z^1_s|+|Z^2_s|)|Z^1_s-Z^2_s|\right)d\langle B\rangle_s\right)^{\alpha}\right]\Bigg)\\
     \leq &C\bigg(\|Y^1-Y^2\|_{S^\infty_G}^\alpha+\|Z^1-Z^2\|_{BMO_G}^\alpha\\
     &+\left(1+\|Z^1\|_{BMO_G}^\alpha+\|Z^2\|_{BMO_G}^\alpha\right)\|Z^1-Z^2\|_{BMO_G}^\alpha\bigg).
 \end{align*}
 Therefore, we have $A^1\equiv A^2$.
\end{proof}

Now, we are in a position to proceed Step 2, which will be divided into several lemmas. We first show that $(Y^n-L)^-$ converges to $0$ uniformly in $t$, which is a generalization of Lemma 4.2 in \cite{CT} and Lemma 4.4 in \cite{LS}. It should be pointed out that the proof of Lemma 4.2 in \cite{CT} cannot be applied here since $Y^n$ is not monotone in $n$ in the present framework. Fortunately, by applying some continuity property for the obstacle $L$ (see Lemma 4.2 in \cite{LS}), our proof is even more concise compared with the one of Lemma 4.2 in \cite{CT}.
\begin{lemma}\label{Yn-L}
    For any $\alpha\geq 2$, we have
        \begin{align}\label{EYn-L}
            \lim_{n\rightarrow \infty}\hE\left[\sup_{t\in[0,T]}|(Y^n_t-L_t)^-|^\alpha\right]=0.
        \end{align}
\end{lemma}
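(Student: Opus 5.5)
We want to show $\hE[\sup_t|(Y^n_t-L_t)^-|^\alpha]\to0$ for $Y^n=Y^{n,n}$. The plan is to pass to a Girsanov-transformed equation in which the lower penalization $n(Y^n-L)^-$ acts as a strong linear pull toward $L$. Then we use the uniform continuity of $L$, as in Lemma 4.2 of \cite{LS}, to control the oscillation of $L$ over short time windows of length of order $1/n$.

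\textbf{Step 1 (linearization and change of measure).} Starting from \eqref{Ynm} with $m=n$ and $f\equiv0$, write
\[
g(s,Y^n_s,Z^n_s)=g(s,0,0)+a^n_sY^n_s+b^n_sZ^n_s,
\]
with $|a^n|\le L_y$ and $|b^n|\le L_z(1+|Z^n|)$. By Lemma \ref{est-Znm-Anm}, $\|b^n\|_{BMO_G}\le C$ uniformly in $n$. A small BMO norm is not needed for this lemma, since only a Hölder bound of the type \eqref{est-widetildeE} is used, for some fixed $p$. If Lemma \ref{lem2.4 2.5} does require smallness, I would split $[0,T]$ into finitely many subintervals on which the BMO norm is small; this is standard because the bound on $\|Z^n\|_{BMO_G}$ is uniform. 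Under $\widetilde{\E}^n$, the process $\widetilde B^n=B-\int b^nd\langle B\rangle$ is a $G$-Brownian motion and $K^n$ remains a non-increasing $G$-martingale (Lemma \ref{lem2.6 2.7}).

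\textbf{Step 2 (explicit representation via $e^{-nt}$).} Apply Itô's formula to $e^{-n t}Y^n_t$. Since $n(Y^n-L)^-\ge n(L-Y^n)$, dropping the nonpositive $-dK^n$ contribution and the bounded terms gives
\[
Y^n_t\ \ge\ \widetilde{\E}^n_t\Big[e^{-n(T-t)}\xi+\int_t^T n e^{-n(s-t)}L_s\,ds\Big]-\frac{C}{n}-\widetilde{\E}^n_t\Big[\int_t^T e^{-n(s-t)}\,dA^{n,-}_s\Big]-C\,\widetilde{\E}^n_t\Big[\int_t^Te^{-n(s-t)}(1+|Z^n_s|^2)d\langle B\rangle_s\Big].
\]
The quadratic remainder comes from the $a^nY^n$ term and $g(s,0,0)$. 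The $A^{n,-}$ term is at most $C/n$ by Lemma \ref{Ynm-U}, because its density $n(Y^n-U)^+$ is bounded by $C$. Hence
\[
(Y^n_t-L_t)^-\le \frac{C}{n}+\widetilde{\E}^n_t\Big[e^{-n(T-t)}|\xi-L_t|+\int_t^T ne^{-n(s-t)}|L_s-L_t|\,ds\Big]+\text{(quadratic remainder)}.
\]

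\textbf{Step 3 (main obstacle: uniformity in $t$ and $n$).} Two terms need control.

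The oscillation term is bounded by $\widetilde{\E}^n_t[\sup_{|s-t|\le\delta}|L_s-L_t|]+C e^{-n\delta}$. Using \eqref{est-widetildeE}, this is at most $C(\hE_t[\sup_{|s-r|\le\delta}|L_s-L_r|^p])^{1/p}+Ce^{-n\delta}$. Since $L_T\le\xi$, the term $e^{-n(T-t)}|\xi-L_t|$ is treated in the same way, because $(\xi-L_t)^-\le|L_T-L_t|$. Taking $\sup_t$, the $\alpha$-th power and then $\hE$, Doob's inequality (Theorem \ref{the1.2}) turns $\hE[\sup_t\hE_t[X]^{\alpha/p}]$ into $\hE[X^{\alpha'}]$ with $X=\sup_{|s-r|\le\delta}|L_s-L_r|$. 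By Assumption \ref{assLU}(2) and $L\in\cap_p S^p_G$, the quantity $\hE[X^{\alpha'}]$ tends to $0$ as $\delta\to0$ (Lemma 4.2 of \cite{LS}). Letting $n\to\infty$ and then $\delta\to0$ handles this term.

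The quadratic remainder is the delicate part. The key fact is that the kernel $e^{-n(s-t)}$ has mass about $1/n$, while $\widetilde{\E}^n_t[\int_t^{t+\delta}|Z^n|^2d\langle B\rangle]$ is bounded by the BMO norm. The plan is to split the integral at $t+\delta$: the part after $t+\delta$ is at most $e^{-n\delta}C$. For the part before $t+\delta$, a uniform BMO bound alone gives only $O(1)$, so I would avoid it by applying the Girsanov change to the full linearization in $z$. That is, use $b^n$ from the difference $g(s,Y^n,Z^n)-g(s,Y^n,0)$, so that no $|Z^n|^2$ remainder appears and only bounded terms remain, contributing $O(1/n)$. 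This turns the estimate into the clean bound from Step 2 and completes the proof.
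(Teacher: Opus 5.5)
Once you adopt the repair at the end of Step 3 (take the Girsanov drift from the full linearization in $z$, so that only bounded terms remain), your argument is correct. It has the same skeleton as the paper's:
a drift whose $G$-BMO norm is bounded uniformly in $n$ by Lemma \ref{est-Znm-Anm};
the kernel $ne^{-n(s-t)}$ combined with $(Y^n-L)^-\ge L-Y^n$;
the bounded terms and the upper penalization contributing $O(1/n)$ thanks to Lemma \ref{Ynm-U};
and the oscillation of $L$ controlled as in Lemma 4.2 of \cite{LS}.

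The differences are in the bookkeeping. You derive the lower bound directly from It\^o's formula for $e^{-nt}Y^n_t$ and monotonicity (only the sign of $dK^n$ matters). The paper instead compares $Y^n$ with an explicitly solvable linear $G$-BSDE via the Lipschitz comparison theorem. More substantively, you undo the change of measure immediately on the conditional expectations through \eqref{est-widetildeE}. You then dominate the oscillation term (the paper's $\widetilde{L}^n_t$) by the $t$-independent variable $\sup_{|s-r|\le\delta}|L_s-L_r|+2e^{-n\delta}\sup_{s}|L_s|$, and conclude with Theorem \ref{the1.2} under $\hE$ alone. The paper instead first proves \eqref{Envarepsilon Yn-L} under $\hE^{n,\varepsilon}$, which needs a Doob-type inequality for the new expectation (Remark 2.9 of \cite{HJPS1}). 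It then returns to $\hE$ through the reverse H\"older bound of Lemma \ref{lem2.4 2.5}(ii). Your route is shorter and dispenses with both tools; the paper's runs parallel to the transfer \eqref{hEwidehatE} used in Step 3 of the proof of Theorem \ref{main1}.

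A few things to clean up.
\begin{itemize}
\item After Step 1 no $|Z^n|^2$ remainder can appear in Step 2. It certainly does not come from $g(s,0,0)$ or $a^nY^n$, which are bounded.
\item The term $e^{-n(T-t)}|\xi-L_t|$ must be replaced by $e^{-n(T-t)}|L_T-L_t|$, using $\xi\ge L_T$.
\item The tail of the kernel gives $2e^{-n\delta}\sup_s|L_s|$ rather than a constant, since $L$ is only bounded from above. This is harmless because $L\in\cap_{p>1}S^p_G(0,T)$.
\item Your fallback of splitting $[0,T]$ into finitely many intervals of small $G$-BMO norm does not follow from a uniform BMO bound, since the mass of $|Z^n|^2$ may concentrate on arbitrarily short intervals. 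It is not needed anyway: $\phi(q)\to\infty$ as $q\downarrow 1$, so \eqref{est-widetildeE} holds for some large $p$ independent of $n$.
\item In the $G$-framework the plain difference quotient defining $b^n$ is not known to lie in $H^2_G(0,T)$. You should use the cutoff $l$ as in the paper (Lemma 3.6 of \cite{HLSH}). This only adds a term bounded by $C\varepsilon$, which the kernel again turns into $O(1/n)$.
\end{itemize}
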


 \begin{proof}
        By Lemma \ref{est-Ynm}, Lemma \ref{Ynm-U} and Lemma \ref{est-Znm-Anm}, for any $\alpha\geq 2$, there exist two constants $C,C_\alpha$ independent of $n$, such that 
        \begin{equation}\label{est-YnZnKn}\begin{split}
          &\|Y^n\|_{S^\infty_G}\leq C,\  n\sup_{t\in[0,T]}(Y^n_t-U_t)^+\leq C,\   \|A^{n,-}\|_{L^\infty_G}\leq C, \\
          & \|Z^{n}\|_{BMO_G}\leq C, \  \hat{\E}[|A^{n,+}_T|^\alpha]\leq C_\alpha, \ \hat{\E}[|K^{n}_T|^\alpha]\leq C_\alpha.
        \end{split}\end{equation}

        Similar to the proof of Proposition \ref{est-Y1-Y2} and Lemma \ref{Ynm-U}, we first rewrite the generator as follows
        \begin{align*}
            g(s,Y^n_s,Z^n_s)=g(s,0,0)+m^{n,\varepsilon}_s+a^{n,\varepsilon}_sY^n_s+b^{n,\varepsilon}_sZ^n_s,
        \end{align*}
        where 
        \begin{align*}
             |{a}^{n,\varepsilon}_s|\leq L_y, \ |{b}^{n,\varepsilon}_s|\leq L_z(1+|{Z}^n_s|), \ |{m}^{n,\varepsilon}_s|\leq 2\varepsilon(L_y+L_z(1+2\varepsilon)).
        \end{align*}
        Therefore, $G$-BSDE \eqref{Ynm} can be rewritten as 
        \begin{align*}
            Y^{n}_t=&\xi+\int_t^T \left(g(s,0,0)+m^{n,\varepsilon}_s+a^{n,\varepsilon}_sY^n_s\right)d\langle B\rangle_s-\int_t^T Z_s^{n}d{B}^{n,\varepsilon}_s\\
        &-(K_T^{n}-K_t^{n})
		+\int_t^T n(Y_s^{n}-L_s)^-ds-\int_t^T n(Y_s^{n}-U_s)^+ds,
        \end{align*}
        where $B^{n,\varepsilon}_t=B_t-\int_0^t b^{n,\varepsilon}_sd\langle B\rangle_s$. By Lemma 3.6 in \cite{HLSH} and \eqref{est-YnZnKn}, $b^{n,\varepsilon}\in H^2_G(0,T)$ is a $G$-BMO martingale generator. Therefore, we can define a new $G$-expectation $\hE^{n,\varepsilon}$ by $\mathcal{E}(b^{n,\varepsilon})$ and $B^{n,\varepsilon}$ is a $G$-Brownian motion under $\hat{\E}^{n,\varepsilon}$.

        In the following, we first prove that for any $\alpha\geq 2$
        \begin{align}\label{Envarepsilon Yn-L}
            \lim_{n\rightarrow\infty}\hE^{n,\varepsilon}\left[\sup_{t\in[0,T]}|(Y^n_t-L_t)^-|^\alpha\right]=0.
        \end{align}
        For this purpose, let $(y^n,z^n,k^n)$ be the solution to the following $G$-BSDE
        \begin{align*}
            y^{n}_t=&\xi+\int_t^T \left(g(s,0,0)+m^{n,\varepsilon}_s+a^{n,\varepsilon}_sY^n_s\right)d\langle B\rangle_s-\int_t^T z_s^{n}d{B}^{n,\varepsilon}_s\\
        &-(k_T^{n}-k_t^{n})
		+\int_t^T n(L_s-y^n_s)ds-\int_t^T n(Y_s^{n}-U_s)^+ds.
        \end{align*}
        We can solve it explicitly to obtain that
        \begin{align*}
            y^n_t=e^{nt}\hE^{n,\varepsilon}_t&\Bigg[e^{-nT}\xi+\int_t^T n e^{-ns}L_sds-\int_t^T n e^{-ns}(Y^n_s-U_s)^+ ds\\
            &+\int_t^T e^{-ns}\left(g(s,0,0)+m^{n,\varepsilon}_s+a^{n,\varepsilon}_sY^n_s\right)d\langle B\rangle_s\Bigg].
        \end{align*}
        In view of Theorem 3.6 in \cite{HJPS2} (i.e., the comparison theorem for $G$-BSDEs with Lipschitz generators) and noting that $\xi\geq L_T$, for any $t\in[0,T]$, we have 
        \begin{align*}
            Y^n_t-L_t\geq y^n_t-L_t=\hE^{n,\varepsilon}_t&\Bigg[\widetilde{L}^n_t-\int_t^T n e^{n(t-s)}(Y^n_s-U_s)^+ ds\\
            &+\int_t^T e^{n(t-s)}\left(g(s,0,0)+m^{n,\varepsilon}_s+a^{n,\varepsilon}_sY^n_s\right)d\langle B\rangle_s\Bigg],
        \end{align*}
        where 
        \begin{align*}
            \widetilde{L}^n_t=e^{n(t-T)}(L_T-L_t)+\int_t^T n e^{n(t-s)}(L_s-L_t)ds.
        \end{align*}
        Consequently, we have
        \begin{align*}
            (Y^n_t-L_t)^-\leq \hE^{n,\varepsilon}_t&\Bigg[|\widetilde{L}^n_t|+\int_t^T n e^{n(t-s)}(Y^n_s-U_s)^+ ds\\
            &+\left|\int_t^T e^{n(t-s)}\left(g(s,0,0)+m^{n,\varepsilon}_s+a^{n,\varepsilon}_sY^n_s\right)d\langle B\rangle_s\right|\Bigg].
        \end{align*}
        Similar analysis as the proof for Equation (4.4) in \cite{CT}, for any $\alpha\geq 2$, we have
         \begin{align}\label{ee1}
            \lim_{n\rightarrow\infty}\hE^{n,\varepsilon}\left[\sup_{t\in[0,T]}\left|\int_t^T e^{n(t-s)}\left(g(s,0,0)+m^{n,\varepsilon}_s+a^{n,\varepsilon}_sY^n_s\right)d\langle B\rangle_s\right|^\alpha\right]=0.
        \end{align}
       Using \eqref{est-YnZnKn}, simple calculation yields that for any $\alpha\geq 2$,
        \begin{align}\label{ee2}
            \lim_{n\rightarrow\infty}\hE^{n,\varepsilon}\left[\sup_{t\in[0,T]}\left|\int_t^T n e^{n(t-s)}(Y^n_s-U_s)^+ ds\right|^\alpha\right]\leq \lim_{n\rightarrow\infty}\frac{C}{n^{\alpha}}=0. 
            \end{align}

        By \eqref{est-YnZnKn}, there exists a constant $p>1$ independent of $n,\varepsilon$, such that 
        \begin{align*}
            \|b^{n,\varepsilon}\|_{BMO_G}\leq L_z(1+\|Z^n\|_{BMO_G})<\phi(p).
        \end{align*}
        Set $q=\frac{p}{p-1}$. By Lemma \ref{lem2.4 2.5}, for any $X\in L^q_G(\Omega_T)$, we have
        \begin{align}\label{holder for Envarepsilon}
            \hE^{n,\varepsilon}_t[X]\leq \left(\hE\left[\frac{\mathcal{E}(b^{n,\varepsilon})_T^p}{\mathcal{E}(b^{n,\varepsilon})_t^p}\right]\right)^{\frac{1}{p}}\left(\hE_t[|X|^q]\right)^{\frac{1}{q}}\leq C_p\left(\hE_t[|X|^q]\right)^{\frac{1}{q}}.
        \end{align}
        Consequently, there exists a constant $C'$ independent of $n,\varepsilon$, such that for any $\alpha\ge 2$,
        \begin{align}\label{doob for Envarepsilon'}
            \hE^{n,\varepsilon}\left[\sup_{t\in[0,T]}\hE^{n,\varepsilon}_t\left[|\widetilde{L}^n_t|^\alpha\right]\right]\leq C'\left(\hE\left[\sup_{t\in[0,T]}\hE_t\left[|\widetilde{L}^n_t|^{\alpha q}\right]\right]\right)^{\frac{1}{q}}.
        \end{align}
        Moreover, in view of Remark 2.9 in \cite{HJPS1}, there exists a constant $C''$ independent of $n,\varepsilon$, such that for any $\alpha\geq 2$,
        \begin{align}\label{doob for Envarepsilon}
            \hE^{n,\varepsilon}\left[\sup_{t\in[0,T]}\hE^{n,\varepsilon}_t\left[|\widetilde{L}^n_t|^\alpha\right]\right]\leq C''\left(\hE^{n,\varepsilon}\left[\sup_{t\in[0,T]}|\widetilde{L}^n_t|^{2\alpha}\right]+\left(\hE^{n,\varepsilon}\left[\sup_{t\in[0,T]}|\widetilde{L}^n_t|^{2\alpha}\right]\right)^{\frac{1}{2}}\right).
        \end{align}
        Since $L\in \cap_{\alpha\geq 2} S^\alpha_G(0,T)$, by Lemma 4.2 in \cite{LS}, for any $\alpha\geq 2$, we have 
        \begin{equation}\label{widetildeL}
            \lim_{n\rightarrow\infty}\hE\left[\sup_{t\in[0,T]}\hE_t\left[|\widetilde{L}^n_t|^\alpha\right]\right]=0.
        \end{equation}
        Combining Equations \eqref{doob for Envarepsilon'} and \eqref{widetildeL}, for any $\alpha\geq 2$, we have 
        \begin{align*}
            \lim_{n\rightarrow\infty}\hE^{n,\varepsilon}\left[\sup_{t\in[0,T]}\hE^{n,\varepsilon}_t\left[|\widetilde{L}^n_t|^\alpha\right]\right]=0.
        \end{align*}
        Besides, by Equations \eqref{ee1}, \eqref{ee2} and \eqref{doob for Envarepsilon}, for any $\alpha\geq 2$, we have
        \begin{align*}
            &\lim_{n\rightarrow\infty}\hE^{n,\varepsilon}\left[\sup_{t\in[0,T]}\hE^{n,\varepsilon}_t\left[\left|\int_t^T e^{n(t-s)}\left(g(s,0,0)+m^{n,\varepsilon}_s+a^{n,\varepsilon}_sY^n_s\right)d\langle B\rangle_s\right|^\alpha\right]\right]=0,\\
            &\lim_{n\rightarrow\infty}\hE^{n,\varepsilon}\left[\sup_{t\in[0,T]}\hE^{n,\varepsilon}_t\left[\left|\int_t^T n e^{n(t-s)}(Y^n_s-U_s)^+ ds\right|^\alpha\right]\right]=0.
        \end{align*}
        All the above analysis indicates that Equation \eqref{Envarepsilon Yn-L} holds. 
        
        By Lemma \ref{lem2.4 2.5} and Remark 2.7 in \cite{CT}, there exists $r>1$ which is independent of $n,\varepsilon$, such that 
        \begin{align*}
            \hE\left[\left(\mathcal{E}(b^{n,\varepsilon}_T)\right)^{\frac{1}{1-r}}\right]\leq C_r,
        \end{align*}
        where $C_r$ is a constant depends on $r$ but not depend on $n,\varepsilon$. Therefore, for any $\alpha\geq 2$, it is easy to check that 
        \begin{align*}
            \hE\left[\sup_{t\in[0,T]}|(Y^n_t-L_t)^-|^\alpha\right]&\leq \left(\hE\left[\mathcal{E}(b^{n,\varepsilon}_T)\sup_{t\in[0,T]}|(Y^n_t-L_t)^-|^{\alpha r}\right]\right)^{\frac{1}{r}} \left(\hE\left[\left(\mathcal{E}(b^{n,\varepsilon}_T)\right)^{\frac{1}{1-r}}\right]\right)^{\frac{r-1}{r}}\\
            &\leq C^{\frac{r-1}{r}}\left(\hE^{n,\varepsilon} \left[\sup_{t\in[0,T]}|(Y^n_t-L_t)^-|^{\alpha r}\right]\right)^{\frac{1}{r}}.
        \end{align*}
        Finally, we obtain that Equation \eqref{EYn-L} holds.
        \end{proof}

By a similar ananlysis as the proof of Lemma 4.3 and Theorem 5.1 in \cite{CT}, we have the following result.
\begin{lemma}\label{Cauchy}
    For any $\alpha\geq 2$, we have 
    \begin{align*}
        &\lim_{n,m\rightarrow \infty}\hE\left[\sup_{t\in[0,T]}|Y^n_t-Y^m_t|^\alpha\right]=0,\\
        &\lim_{n,m\rightarrow \infty}\hE\left[\left(\int_0^T|Z^n_t-Z^m_t|^2d\langle B\rangle_t\right)^{\frac{\alpha}{2}}\right]=0,\\
        &\lim_{n,m\rightarrow \infty}\hE\left[\sup_{t\in[0,T]}|A^n_t-A^m_t|^\alpha\right]=0.
    \end{align*}
\end{lemma}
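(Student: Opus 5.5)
The plan is to prove the Cauchy property first for $Y^n$, using It\^o's formula for $|Y^n_t-Y^m_t|^2$ combined with a $G$-Girsanov change of measure. The estimates for $Z$ and $A$ will then follow from the one for $Y$.

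\textbf{Setting up the difference equation.} Set $\hat Y=Y^n-Y^m$ and $\hat Z=Z^n-Z^m$. Exactly as in the proof of Proposition \ref{est-Y1-Y2}, I linearize
\[
g(s,Y^n_s,Z^n_s)-g(s,Y^m_s,Z^m_s)=\hat m^\varepsilon_s+\hat a^\varepsilon_s\hat Y_s+\hat b^\varepsilon_s\hat Z_s,
\]
with $|\hat b^\varepsilon|\le L_z(1+|Z^n|+|Z^m|)$. By \eqref{est-YnZnKn} the BMO norms are bounded uniformly in $n,m$. The smallness required by Lemma \ref{lem2.4 2.5} is handled as in \cite{CT}: I use the fact that a BMO generator with bounded norm satisfies the reverse H\"older and Muckenhoupt inequalities for some exponents depending only on the bound (cf. Remark 2.7 in \cite{CT}). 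This gives an $\widetilde{\E}^\varepsilon$ comparable to $\hE$ in $L^p$ via \eqref{est-widetildeE} and its reverse counterpart, uniformly in $n,m$.

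Applying It\^o to $e^{rt}|\hat Y_t|^2$ under $\widetilde{\E}^\varepsilon$ produces the following reflection terms:
\[
2\int e^{rs}\hat Y_s\,d(A^{n,+}_s-A^{m,+}_s),\qquad -2\int e^{rs}\hat Y_s\,d(A^{n,-}_s-A^{m,-}_s),\qquad -2\int e^{rs}\hat Y_s\,d(K^n_s-K^m_s).
\]
\begin{itemize}
\item \emph{Lower penalization.} Since $dA^{n,+}$ charges only $\{Y^n<L\}$, we have $\hat Y\,dA^{n,+}\le (Y^m-L)^-\,dA^{n,+}$. Symmetrically, $-\hat Y\,dA^{m,+}\le (Y^n-L)^-\,dA^{m,+}$.
\item \emph{Upper penalization.} Similarly, $-\hat Y\,dA^{n,-}\le (Y^n-U)^+\,dA^{n,-}$ and $\hat Y\,dA^{m,-}\le (Y^m-U)^+\,dA^{m,-}$.
\item \emph{$G$-martingale terms.} Split $\hat Y=\hat Y^+-\hat Y^-$ as in the proof of Proposition \ref{est-Y1-Y2}. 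The terms $\int \hat Y^\pm\,dK$ then become non-increasing $G$-martingales under $\widetilde{\E}^\varepsilon$ (Lemma \ref{lem2.6 2.7}) or have a favourable sign.
\end{itemize}

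\textbf{Vanishing of the reflection terms.} After taking $\widetilde{\E}^\varepsilon$ and passing to $\hE$ by H\"older:
\begin{itemize}
\item For the lower terms I bound by $\big(\hE[\sup_t|(Y^m_t-L_t)^-|^{2p}]\big)^{1/2p}\big(\hE[|A^{n,+}_T|^{2p}]\big)^{1/2p}$. This tends to $0$ by Lemma \ref{Yn-L} and the uniform bounds on $A^{n,+}$.
\item For the upper terms I use Lemma \ref{Ynm-U}. It gives $(Y^n-U)^+\le C/n$, while $A^{n,-}_T\le C$, so these terms are $O(1/n)$ quasi-surely.
\item The $\hat m^\varepsilon$ contribution vanishes as $\varepsilon\to0$, as in Proposition \ref{est-Y1-Y2}.
\end{itemize}

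\textbf{Passing from $\widetilde{\E}^\varepsilon$ to $\hE$ with a supremum in $t$.} This step is the main obstacle. The estimate above is pointwise conditional, and I need $\hE[\sup_t|\hat Y_t|^\alpha]$. I would write
\[
|\hat Y_t|^2\le C\,\widetilde{\E}^\varepsilon_t[\Theta^{n,m}]
\]
for a random variable $\Theta^{n,m}$ that tends to $0$ in every $L^p_G$. Then \eqref{est-widetildeE} gives $|\hat Y_t|^2\le C(\hE_t[|\Theta^{n,m}|^p])^{1/p}$, and Theorem \ref{the1.2} (Doob-type) yields $\hE[\sup_t|\hat Y_t|^\alpha]\to0$.

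The delicate point is that $\Theta^{n,m}$ contains $\sup_s(Y^m_s-L_s)^-$ times $A^{n,+}_T$. Its $L^p$ norm tends to $0$ by Lemma \ref{Yn-L} and Cauchy--Schwarz, which is enough. A strict-inclusion loss of exponent is harmless since all moments are available.

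\textbf{Estimates for $Z$ and $A$.} For $\hat Z$, I apply It\^o to $|\hat Y|^2$ under $\hE$ directly and use BDG (Proposition \ref{BDG}). The quadratic term $\int|\hat Y||\hat b||\hat Z|$ is absorbed using Lemma \ref{lem2.2} and H\"older with the uniform BMO bounds. The reflection terms are bounded by $\sup|\hat Y|$ times the total variations of $A^{n,\pm}$ and $K^n$, whose moments are uniformly bounded. This gives the second limit.

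Finally, I express
\[
A^n_t=Y^n_0-Y^n_t-\int_0^t g(s,Y^n_s,Z^n_s)\,d\langle B\rangle_s+\int_0^t Z^n_s\,dB_s.
\]
Estimating differences as in Step 1 of the uniqueness proof (Lipschitz/quadratic bound on $g$, H\"older, Lemma \ref{lem2.2}, BDG) gives the third limit.
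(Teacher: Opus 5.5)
Your proposal is correct and follows essentially the route the paper delegates to Lemma 4.3 and Theorem 5.1 of \cite{CT}: the Girsanov-linearized It\^o estimate of Proposition \ref{est-Y1-Y2}, with the lower penalization terms killed by Lemma \ref{Yn-L} and the upper ones by Lemma \ref{Ynm-U}, a Doob-type inequality for the supremum, and then $Z$ and $A$ recovered from $Y$. One index slip needs fixing: on the support of $dA^{n,-}$ you only have $-\hat Y\,dA^{n,-}\le (Y^m-U)^+\,dA^{n,-}$, and symmetrically $\hat Y\,dA^{m,-}\le (Y^n-U)^+\,dA^{m,-}$, not the bounds with $(Y^n-U)^+$ and $(Y^m-U)^+$ that you wrote; Lemma \ref{Ynm-U} together with the uniform bound on $A^{k,-}_T$ still makes these terms $O(1/n+1/m)$ quasi-surely, so the argument goes through.
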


Now, we are ready to show that the doubly reflected $G$-BSDE with quadratic generator has a solution.

\begin{proof}[Proof of Step 2]
    By Lemma \ref{Cauchy}, for any $\alpha\geq 2$, there exists $(Y,Z,A)\in {S}^\alpha_G(0,T)\times {H}^\alpha_G(0,T)\times {S}^\alpha_G(0,T)$, such that
    \begin{align*}
        &\lim_{n\rightarrow \infty}\hE\left[\sup_{t\in[0,T]}|Y^n_t-Y_t|^\alpha\right]=0,\\
        &\lim_{n\rightarrow \infty}\hE\left[\left(\int_0^T|Z^n_t-Z_t|^2d\langle B\rangle_t\right)^{\frac{\alpha}{2}}\right]=0,\\
        &\lim_{n\rightarrow \infty}\hE\left[\sup_{t\in[0,T]}|A^n_t-A_t|^\alpha\right]=0.
    \end{align*}
    By Equation \eqref{est-YnZnKn} and a similar analysis as the proof for Lemma \ref{est-barYn-barZn}, there exists a constant $C$ independent of $n$, such that 
        \begin{equation*}\begin{split}
          &\|Y\|_{S^\infty_G}\leq C.
        \end{split}\end{equation*}
       For any $\tau\in \mathcal{T}_0^T$ and any $\P\in\mathcal{P}$, we may check that 
        \begin{align*}
            \lim_{n\rightarrow\infty}\E^\P\left[\left|\E^\P_\tau\left[\int_\tau^T|Z^n_s|^2d\langle B\rangle_s\right]-\E^\P_\tau\left[\int_\tau^T|Z_s|^2d\langle B\rangle_s\right]\right|\right]=0,
        \end{align*}
        which amounts to say that 
        \begin{align*}
            \E^\P_\tau\left[\int_\tau^T|Z^n_s|^2d\langle B\rangle_s\right]\xrightarrow{\P} \E^\P_\tau\left[\int_\tau^T|Z_s|^2d\langle B\rangle_s\right], \textrm{ as } n\rightarrow\infty.
        \end{align*}
        Then, there exists a subsequence such that 
        \begin{align*}
            \lim_{k\rightarrow\infty}\E^\P_\tau\left[\int_\tau^T|Z^{n_k}_s|^2d\langle B\rangle_s\right]=\E^\P_\tau\left[\int_\tau^T|Z_s|^2d\langle B\rangle_s\right], \ \P\textrm{-a.s.}
        \end{align*}
        Therefore, by Equation \eqref{est-YnZnKn}, for any $\tau\in \mathcal{T}_0^T$ and any $\P\in\mathcal{P}$, we have 
        \begin{align*}
            \E^\P_\tau\left[\int_\tau^T|Z_s|^2d\langle B\rangle_s\right]\leq \sup_{n}\|Z^n\|_{BMO_G}\leq C.
        \end{align*}
        That is, $Z\in BMO_G$.  

        By Lemma \ref{Yn-L} and Equation \eqref{est-YnZnKn}, for any $t\in[0,T]$, we have $L_t\leq Y_t\leq U_t$. It remains to prove that $(Y,A)$ satisfies the approximate Skorohod condition. That is, for any $\alpha\geq 2$,       $$\lim\limits_{n\rightarrow\infty}\hat{\mathbb{E}}\left[\left|\int_0^T (Y_s-L_s)d A_s^{n,+}\right|^{\alpha/2}\right]=0,\ \lim\limits_{n\rightarrow\infty}\hat{\mathbb{E}}\left[\left|\int_0^T (U_s-Y_s)d A_s^{n,-}\right|^{\alpha/2}\right]=0.$$
        We only prove the first one. It is easy to check that 
        \begin{align*}
            \int_0^T (Y_s-L_s)d A_s^{n,+}&=\int_0^T (Y_s-Y^n_s)d A_s^{n,+}+\int_0^T (Y^n_s-L_s)n(Y^n_s-L_s)^-ds\\
            &\leq \sup_{t\in[0,T]}|Y_t-Y^n_t||A^{n,+}_T|.
        \end{align*}
        It follows that 
        \begin{align*}
           \lim\limits_{n\rightarrow\infty}\hat{\mathbb{E}}\left[\left|\int_0^T (Y_s-L_s)d A_s^{n,+}\right|^{\alpha/2}\right]\leq \lim_{n\rightarrow\infty} \left(\hE\left[\sup_{t\in[0,T]}|Y^n_t-Y_t|^\alpha\right]\right)^{\frac{1}{2}}\left(\hE[|A^{n,+}_T|^\alpha]\right)^{\frac{1}{2}}=0.
        \end{align*}
        The proof is complete.
\end{proof}

Finally, we show that the solution of doubly reflected $G$-BSDE can also be approximated by the solutions to a family of penalized reflected $G$-BSDEs, i.e., Equation \eqref{Yn barYn} holds.

\begin{proof}[Proof of Step 3]
    First, note that for any $n\in\mathbb{N}$ and any $t\in[0,T]$, $\bar{Y}^n_t\geq L_t$. Then, we have 
    \begin{align*}
		\bar{Y}_t^n=&\xi+\int_t^T g(s,\bar{Y}^n_s,\bar{Z}_s^n)d\langle B\rangle_s-\int_t^T \bar{Z}_s^ndB_s+(\bar{A}^n_T-\bar{A}^n_t)\\
		&-\int_t^T n(\bar{Y}^n_s-U_s)^+ds+\int_t^Tn(\bar{Y}^n_s-L_s)^-ds.
	\end{align*}
    By Theorem \ref{comparison theorem for GBSDE}, for any $n\in\mathbb{N}$ and any $t\in[0,T]$, we have $\bar{Y}^n_t\geq Y^n_t$. We define
    \begin{align*}
        &\widehat{Y}^n_t:=\bar{Y}^n_t-Y^n_t, \ \widehat{Z}^n_t:=\bar{Z}^n_t-Z^n_t, \ \widehat{K}^n_t:=-\bar{A}^n_t-K^n_t, \\
        &\widehat{A}^{n,+}_t:=\bar{A}^{n,+}_t-A^{n,+}_t, \ \widehat{A}^{n,-}_t:=\bar{A}^{n,-}_t-A^{n,-}_t,
    \end{align*}
    where
    \begin{align*}
        \bar{A}^{n,+}_t=\int_0^t n(\bar{Y}^n_s-L_s)^-ds(=0), \ 
        \bar{A}^{n,-}_t=\int_0^t n(\bar{Y}^n_s-U_s)^+ds.
    \end{align*}
    Similar as the proof of Proposition \ref{est-Y1-Y2}, for any $\varepsilon>0$ and any $s\in[0,T]$, we have
\begin{align*}
        g(s,\bar{Y}^n_s,\bar{Z}^n_s)-g(s,Y^n_s,Z^n_s)=\widehat{m}^{n,\varepsilon}_s+\widehat{a}^{n,\varepsilon}_s \widehat{Y}^n_s+\widehat{b}^{n,\varepsilon}_s \widehat{Z}^n_s
    \end{align*}
    and
    \begin{align*}
        |\widehat{a}^{n,\varepsilon}_s|\leq L_y, \ |\widehat{b}^{n,\varepsilon}_s|\leq L_z(1+|{Z}^n_s|+|\bar{Z}^n_s|), \ |\widehat{m}^{n,\varepsilon}_s|\leq 2\varepsilon(L_y+L_z(1+2\varepsilon+2|Z^n_s|)).
    \end{align*}
    Therefore, we have 
    \begin{align*}
        \widehat{Y}^n_t=\int_t^T(\widehat{m}^{n,\varepsilon}_s+\widehat{a}^{n,\varepsilon}_s \widehat{Y}^n_s)d\langle B\rangle_s-\int_t^T \widehat{Z}^n_sd\widehat{B}^{n,\varepsilon}_s-(\widehat{K}^n_T-\widehat{K}^n_t)+(\widehat{A}^{n,+}_T-\widehat{A}^{n,+}_t)-(\widehat{A}^{n,-}_T-\widehat{A}^{n,-}_t),
    \end{align*}
    where $\widehat{B}^{n,\varepsilon}_t=B_t-\int_0^t \widehat{b}^{n,\varepsilon}_sd\langle B\rangle_s$.

    For any constant $r>0$, applying $G$-It\^{o}'s formula to $e^{rt}(H^n_t)^{\frac{\alpha}{2}}$, where $H^n_t=|\widehat{Y}^n_t|^2$, we have
	\begin{equation}\label{e1}
		\begin{split}
			& |\widehat{Y}^n_t|^\alpha e^{rt}+\int_t^T re^{rs}|\widehat{Y}^n_s|^\alpha ds+\int_t^T \frac{\alpha}{2}(\alpha-1) e^{rs}
			|\widehat{Y}^n_s|^{\alpha-2}|\widehat{Z}^n_s|^2d\langle B\rangle_s\\
			=&
			\int_t^T{\alpha} e^{rs}|\widehat{Y}^n_s|^{\alpha-2}\widehat{Y}^n_s(\widehat{m}^{n,\varepsilon}_s+\widehat{a}^{n,\varepsilon}_s \widehat{Y}^n_s)d\langle B\rangle_s-\int_t^T\alpha e^{rs}|\widehat{Y}^n_s|^{\alpha-2}\widehat{Y}^n_s  n(Y_s^n-L_s)^-ds\\
            &-\int_t^T\alpha e^{rs}|\widehat{Y}^n_s|^{\alpha-2}(\widehat{Y}^n_s\widehat{Z}^n_sd\widehat{B}^{n,\varepsilon}_s-\widehat{Y}^n_sd{K}^n_s-\widehat{Y}^n_sd\bar{A}^n_s)\\
			&-\int_t^T \alpha e^{rs}|\widehat{Y}^n_s|^{\alpha-2}\widehat{Y}^n_s  n[(\bar{Y}^n_s-U_s)^+-(Y^n_s-U_s)^+]ds,
		\end{split}
	\end{equation}
    Noting that $\widehat{Y}^n_t\geq0$, it is easy to check that
	\begin{equation}\label{e2}\begin{split}
			&+\int_t^T\alpha e^{rs}|\widehat{Y}^n_s|^{\alpha-2}\widehat{Y}^n_sd\bar{A}^n_s\leq \int_t^T\alpha e^{rs}|\widehat{Y}^n_s|^{\alpha-2}[(\bar{Y}^n_s-L_s)+(Y^n_s-L_s)^-]d\bar{A}^n_s,\\
			&-\int_t^T \alpha e^{rs}|\widehat{Y}^n_s|^{\alpha-2}\widehat{Y}^n_s  n[(\bar{Y}^n_s-U_s)^+-(Y^n_s-U_s)^+]ds\leq 0,\\
			&-\int_t^T\alpha e^{rs}|\widehat{Y}^n_s|^{\alpha-2}\widehat{Y}^n_s  n(Y_s^n-L_s)^-ds\leq 0,\\
			&+\int_t^T\alpha e^{rs}|\widehat{Y}^n_s|^{\alpha-2}\widehat{Y}^n_sd{K}^n_s\leq 0.
	\end{split} \end{equation}
	Set $M^{n,\varepsilon}_t=\int_0^t\alpha e^{rs}|\widehat{Y}^n_s|^{\alpha-2}(\widehat{Y}^n_s\widehat{Z}^n_sd\widehat{B}^{n,\varepsilon}_s-(\bar{Y}^n_s-L_s)d\bar{A}^n_s)$. Let $r>L_y \alpha \bar{\sigma}^2$. Combining Equations \eqref{e1} and \eqref{e2} implies that 
    \begin{align*}
        |\widehat{Y}^n_t|^\alpha e^{rt}+M^{n,\varepsilon}_T-M^{n,\varepsilon}_t\leq \int_t^T{\alpha} e^{rs}|\widehat{Y}^n_s|^{\alpha-2}\widehat{Y}^n_s\widehat{m}^{n,\varepsilon}_sd\langle B\rangle_s+\int_t^T\alpha e^{rs}|\widehat{Y}^n_s|^{\alpha-2}(Y^n_s-L_s)^-d\bar{A}^n_s.
    \end{align*}
    Note that $M^{n,\varepsilon}$ is a $G$-martingale under $\widehat{\E}^{n,\varepsilon}$. Therefore, we obtain
    \begin{align}\label{widetildeYn}
        |\widehat{Y}^n_t|^\alpha e^{rt}\leq \widehat{\E}^{n,\varepsilon}_t\left[\int_t^T{\alpha} e^{rs}|\widehat{Y}^n_s|^{\alpha-2}\widehat{Y}^n_s\widehat{m}^{n,\varepsilon}_sd\langle B\rangle_s\right]+\widehat{\E}^{n,\varepsilon}_t\left[\int_t^T\alpha e^{rs}|\widehat{Y}^n_s|^{\alpha-2}(Y^n_s-L_s)^-d\bar{A}^n_s\right].
    \end{align}

    In view of Lemma \ref{est-barYn-barZn} and Equation \eqref{est-YnZnKn}, we can choose $p>1$ independent of $n,\varepsilon$, such that 
    \begin{align*}
        \|\widehat{b}^{n,\varepsilon}\|_{BMO_G}\leq L_z(1+\|Z^n\|_{BMO_G}+\|\bar{Z}^n\|_{BMO_G})<\phi(p).
    \end{align*}
    Set $q=\frac{p}{p-1}$. There exists $C_p>0$ depends on $p$ but not on $n,\varepsilon$, such that for any $X\in L^q_G(\Omega_T)$,
    \begin{align*}
        \widehat{\E}_t^{n,\varepsilon}[X]\leq C_q \left(\hE_t[|X|^q]\right)^{\frac{1}{q}}.
    \end{align*}
    Moreover, by Remark 2.9 in \cite{HJPS1}, for some $\beta>1$, there exists a constant $C'$ independent of $n,\varepsilon$, such that 
        \begin{align}\label{doob for widehatEnvarepsilon}
            \widehat{\E}^{n,\varepsilon}\left[\sup_{t\in[0,T]}\widehat{\E}^{n,\varepsilon}_t\left[|X|\right]\right]\leq C'\left(\widehat{\E}^{n,\varepsilon}\left[\sup_{t\in[0,T]}|X|^{\beta}\right]+\left(\widehat{\E}^{n,\varepsilon}\left[\sup_{t\in[0,T]}|X|^{\beta}\right]\right)^{\frac{1}{\beta}}\right).
        \end{align}
    By Lemma \ref{lem2.4 2.5}, Remark 2.7 in \cite{CT} and the fact that $\|\widehat{b}^{n,\varepsilon}\|_{BMO_G}\leq C$, there is $r>1$ which is independent of $n,\varepsilon$, such that 
    \begin{align*}
        \hE\left[\left(\mathcal{E}(\widehat{b}^{n,\varepsilon})_T\right)^{\frac{1}{1-r}}\right]\le C_r.
    \end{align*}
    Therefore, for each $\alpha\geq 2$, we have
    \begin{equation}\label{hEwidehatE}\begin{split}
        \hE\left[\sup_{t\in[0,T]}|Y^n_t-\bar{Y}^n_t|^\alpha\right]=&\hE\left[\left(\mathcal{E}(\widehat{b}^{n,\varepsilon})_T\right)^{\frac{1}{r}}\left(\mathcal{E}(\widehat{b}^{n,\varepsilon})_T\right)^{-\frac{1}{r}}\sup_{t\in[0,T]}|Y^n_t-\bar{Y}^n_t|^\alpha\right]\\
        \leq &\left(\hE\left[\left(\mathcal{E}(\widehat{b}^{n,\varepsilon})_T\right)^{\frac{1}{1-r}}\right]\right)^{\frac{r-1}{r}}\left(\hE\left[\mathcal{E}(\widehat{b}^{n,\varepsilon})_T\sup_{t\in[0,T]}|Y^n_t-\bar{Y}^n_t|^{\alpha r}\right]\right)^{\frac{1}{r}}\\
        \leq &C_r\left(\widehat{\E}^{n,\varepsilon}\left[\sup_{t\in[0,T]}|Y^n_t-\bar{Y}^n_t|^{\alpha r}\right]\right)^{\frac{1}{r}}.
    \end{split}\end{equation}

        By Equations \eqref{widetildeYn}, \eqref{doob for widehatEnvarepsilon} and \eqref{hEwidehatE}, it suffices to prove that for any $\alpha\geq 2$
        \begin{align*}
            &\lim_{\varepsilon\rightarrow\infty}\widehat{\E}^{n,\varepsilon}\left[\left(\int_0^T|\widehat{Y}^n_s|^{\alpha-2}\widehat{Y}^n_s\widehat{m}^{n,\varepsilon}_sd\langle B\rangle_s\right)^{\beta}\right]=0, \textrm{ for any fixed } n\in\mathbb{N},\\
            &\lim_{n\rightarrow\infty}\widehat{\E}^{n,\varepsilon}\left[\left(\int_0^T|\widehat{Y}^n_s|^{\alpha-2}(Y^n_s-L_s)^-d\bar{A}^n_s\right)^{\beta}\right]=0.
        \end{align*}
        Indeed, simple calculation yields that
	\begin{align*}
		&\widehat{\E}^{n,\varepsilon}\left[\left(\int_0^T|\widehat{Y}^n_s|^{\alpha-2}(Y^n_s-L_s)^-d\bar{A}^n_s\right)^{\beta}\right]\\
		\le&\widehat{\E}^{n,\varepsilon}\left[\sup_{s\in[0,T]}|\widehat{Y}^n_s|^{(\alpha-2)\beta}\sup_{s\in[0,T]}\left((Y_s^n-L_s)^-\right)^{\beta}\left(\bar{A}^{n}_T\right)^{\beta}\right]\\
		\le&C_q\left({\hE}\left[\sup_{s\in[0,T]}|\widehat{Y}^n_s|^{(\alpha-2)\beta q}\sup_{s\in[0,T]}\left((Y_s^n-L_s)^-\right)^{\beta q}\left(\bar{A}^{n}_T\right)^{\beta q}\right]\right)^{\frac{1}{q}}\\
        \le&C_{\alpha,\beta,q}\left(\|\bar{Y}^n\|_{S^\infty_G}^{(\alpha-2)\beta q}+\|{Y}^n\|_{S^\infty_G}^{(\alpha-2)\beta q}\right)^{\frac{1}{q}}\left({\hE}\left[\sup_{s\in[0,T]}\left((Y_s^n-L_s)^-\right)^{\alpha\beta q}\right]	\right)^{\frac{1}{\alpha}}\left({\hE} \left[\left(\bar{A}^{n}_T\right)^{\alpha\beta q}\right]\right)^{\frac{1}{\alpha}}
	\end{align*}
    and 
    \begin{align*}
        &\widehat{\E}^{n,\varepsilon}\left[\left(\int_0^T|\widehat{Y}^n_s|^{\alpha-2}\widehat{Y}^n_s\widehat{m}^{n,\varepsilon}_sd\langle B\rangle_s\right)^{\beta}\right]\\
        \leq &C_{\beta}\left(\|\bar{Y}^n\|_{S^\infty_G}^{(\alpha-1)\beta }+\|{Y}^n\|_{S^\infty_G}^{(\alpha-1)\beta }\right)\widehat{\E}^{n,\varepsilon}\left[\left(\int_0^T\widehat{m}^{n,\varepsilon}_sd\langle B\rangle_s\right)^{\beta}\right]\\
        \leq&C_{\beta,q}\left(\|\bar{Y}^n\|_{S^\infty_G}^{(\alpha-1)\beta }+\|{Y}^n\|_{S^\infty_G}^{(\alpha-1)\beta }\right)\left({\hE}\left[\left(\int_0^T\widehat{m}^{n,\varepsilon}_sd\langle B\rangle_s\right)^{\beta q}\right]\right)^{\frac{1}{q}}.
    \end{align*}
    By Lemma \ref{est-barYn-barZn}, Lemma \ref{Yn-L} and \eqref{est-YnZnKn}, we obtain the desired result, i.e.,
   \begin{align*}
            &\lim_{\varepsilon\rightarrow\infty}\widehat{\E}^{n,\varepsilon}\left[\left(\int_0^T|\widehat{Y}^n_s|^{\alpha-2}\widehat{Y}^n_s\widehat{m}^{n,\varepsilon}_sd\langle B\rangle_s\right)^{\beta}\right]=0, \textrm{ for any fixed } n\in\mathbb{N},\\
            &\lim_{n\rightarrow\infty}\widehat{\E}^{n,\varepsilon}\left[\left(\int_0^T|\widehat{Y}^n_s|^{\alpha-2}(Y^n_s-L_s)^-d\bar{A}^n_s\right)^{\beta}\right]=0.
        \end{align*}
        The proof is complete.
\end{proof}

\section{Probabilistic representation for fully nonlinear PDEs with double obstacles} \label{sec:Probabilistic representation for fully nonlinear PDEs with double obstacles}

In this section, we establish the relation between doubly reflected quadratic $G$-BSDEs and obstacle problems for fully nonlinear parabolic PDEs. For simplicity, we only consider the doubly reflected BSDEs driven by $1$-dimensional $G$-Brownian motion. Similar results still holds for the multi-dimensional case. More precisely, the obstacle problem takes the following form
\begin{equation}\label{PDE}
	\begin{cases}
		\max\bigg(u-h',\min\big(-\partial_t u(t,x)-F(D_x^2 u,D_x u,u,x,t),u-h\big)\bigg)=0,
		&\\
		u(T,x)=\phi(x),  &
	\end{cases}
\end{equation}
where
\begin{align*}
	F(D_x^2 u,D_x u,u,x,t)=&G(H(D_x^2 u,D_x u,u,x,t))+ b(t,x)D_x u+f(t,x,u,\sigma(t,x)D_x u),\\
	H(D_x^2 u,D_x u,u,x,t)=&\sigma^2(t,x) D_x^2u+2l(t,x)D_xu
	+2g(t,x,u, \sigma(t,x)D_xu).
\end{align*}
Throughout this section, $b,l,\sigma,h,h':[0,T]\times\mathbb{R}\rightarrow \mathbb{R}$, $\phi:\mathbb{R}\rightarrow \mathbb{R}$ and  $f,g:[0,T]\times\mathbb{R}^3\rightarrow \mathbb{R}$  are deterministic functions and satisfy the following conditions:
\begin{description}
	\item[(Ai)] $b$, $l$, $\sigma$, $f$, $g$, $h$, $h'$ are uniformly continuous in $t$, i.e., there exists a non-decreasing continuous function $w:[0,\infty)\rightarrow[0,\infty)$ with $w(0)=0$, such that 
    \begin{align*}
        &\sup_{(x,y,z)\in\mathbb{R}^3}|l_1(t,x,y,z)-l_1(t',x,y,z)|\leq w(|t-t'|), \ l_1=f,g,\\
        &\sup_{x\in\mathbb{R}}|l_2(t,x)-l_2(t',x)|\leq w(|t-t'|), \ l_2=b,l,\sigma,h,h'.
    \end{align*}
	\item[(Aii)] There exist a positive integer $k$ and a constant $L$ such that
	\begin{align*}
		&|b(t,x)-b(t,x')|+|l(t,x)-l(t,x')|+|\sigma(t,x)-\sigma(t,x')|\leq L|x-x'|,\\
        &|h(t,x)-h(t,x')|+|h'(t,x)-h'(t,x')|\leq L|x-x'|,\\
		&|\phi(x)-\phi(x')|\leq L(1+|x|^k+|x'|^k)|x-x'|,\\
		&|f(t,x,y,z)-f(t,x',y',z')|+|g(t,x,y,z)-g(t,x',y',z')|\\
        &\leq \kappa[(1+|x|^k+|x'|^k)|x-x'|+|y-y'|+(1+|z|+|z'|)|z-z'|].
	\end{align*}
    \item[(Aiii)] There exist two positive constants $M_0,N_0$, such that 
    \begin{align*}
        &\sup_{(t,x)\in[0,T]\times \mathbb{R}}(|f(t,x,0,0)|+|g(t,x,0,0)|+|b(t,x)|+|l(t,x)|)+\sup_{x\in\mathbb{R}}|\phi(x)|\leq M_0,\\
        & h(t,x)\leq N_0, \ -h'(t,x)\leq N_0 \textrm{  for any } (t,x)\in[0,T]\times\mathbb{R}.
    \end{align*}
		\item[(Aiv)]  For any $x\in\mathbb{R}$ and $t\in[0,T]$, there exist two constants $0<\varepsilon<K$, such that $\varepsilon\leq \sigma^2(t,x)\leq K$.
	\item[(Av)] $h'\in C^{1,2}_b([0,T]\times\mathbb{R})$, where $C^{1,2}_b([0,T]\times\mathbb{R})$ refers to the space of functions that are continuously differentiable in their first variable and twice continuously differentiable in their second variable, and both derivatives are bounded. Moreover, for any $x\in\mathbb{R}$ and $t\in[0,T]$, $h(t,x)\leq h'(t,x)$ and $h(T,x)\leq \phi(x)\leq h'(T,x)$.  
\end{description}

For each $0\leq t\leq T$ and $\xi\in \cap_{p\geq 2}L_G^p(\Omega_t)$, consider the following $G$-SDE:
\begin{equation}\label{GSDE}
	X_s^{t,\xi}=\xi +\int_t^s b(r,X_r^{t,\xi})dr+\int_t^s l(r,X_r^{t,\xi})d\langle B\rangle_r+\int_t^s \sigma(r,X_r^{t,\xi})dB_r.
\end{equation}
Then, we have the following estimates, which can be seen in \cite{P19}, Exercise 5.4.8.
\begin{proposition}[\cite{P19}]\label{the1.17}
	Let $\xi,\xi'\in L_G^p(\Omega_t)$ and $p\geq 2$. Then we have, for each $\delta\in[0,T-t]$,
	\begin{align*}
		\hat{\mathbb{E}}_t\bigg[\sup_{s\in[t,t+\delta]}|X_{s}^{t,\xi}-X_{s}^{t,\xi'}|^p]&\leq C|\xi-\xi'|^p,\\
		\hat{\mathbb{E}}_t[|X_{t+\delta}^{t,\xi}|^p]&\leq C(1+|\xi|^p),\\
		\hat{\mathbb{E}}_t\bigg[\sup_{s\in[t,t+\delta]}|X_s^{t,\xi}-\xi|^p]&\leq C(1+|\xi|^p)\delta^{p/2},
	\end{align*}
	where the constant $C$ depends on $L,G,p$ and $T$.
\end{proposition}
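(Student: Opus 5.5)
The plan is to treat this as a standard estimate for $G$-SDEs with Lipschitz coefficients, proved pathwise-in-expectation through the $G$-BDG inequality (Proposition \ref{BDG}) and a conditional Gronwall argument. Throughout, $b,l,\sigma$ are Lipschitz in $x$ uniformly in $t$ by (Aii), and bounded by (Aiii) and (Aiv) for $b,l$ and $\sigma$ respectively; boundedness only simplifies matters, since linear growth suffices. Existence and uniqueness of $X^{t,\xi}$ in $S^p_G(t,T)$ is assumed known from the Picard iteration in \cite{P19}. Working with $\hat{\mathbb{E}}_t$ rather than $\hat{\mathbb{E}}$ is harmless: all bounds below hold for $\hat{\mathbb{E}}_t$ by the same argument, since $\xi$ is $\Omega_t$-measurable and can be treated as a constant under $\hat{\mathbb{E}}_t$.

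\emph{Step 1 (difference estimate).} Set $\Delta_s=X^{t,\xi}_s-X^{t,\xi'}_s$ and write
\begin{align*}
\Delta_s=\xi-\xi'+\int_t^s \delta b_r\,dr+\int_t^s \delta l_r\,d\langle B\rangle_r+\int_t^s\delta\sigma_r\,dB_r .
\end{align*}
For $u\in[t,t+\delta]$, take $\sup_{s\le u}|\cdot|^p$ and use $|a_1+\dots+a_4|^p\le 4^{p-1}\sum|a_i|^p$. Apply H\"older to the $dr$ and $d\langle B\rangle$ integrals, using $d\langle B\rangle_r\le\bar\sigma^2dr$. Apply Proposition \ref{BDG} in conditional form to the stochastic integral, followed by H\"older on $(\int|\delta\sigma|^2)^{p/2}$. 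With the Lipschitz bounds this gives
\begin{align*}
\phi(u):=\hat{\mathbb{E}}_t\Big[\sup_{s\in[t,u]}|\Delta_s|^p\Big]\le C|\xi-\xi'|^p+C\int_t^u \phi(r)\,dr,
\end{align*}
where sublinearity of $\hat{\mathbb{E}}_t$ lets us pull the time integral outside. Gronwall then yields the first inequality.

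\emph{Step 2 (moment and increment bounds).} The same decomposition applied to $X^{t,\xi}_s-\xi$ gives the third estimate. Because the coefficients are bounded, each term is controlled directly. The $dr$ and $d\langle B\rangle$ terms contribute $C\delta^p\le C\delta^{p/2}$, using $\delta\le T$. The martingale term contributes $C\hat{\mathbb{E}}_t[(\int_t^{t+\delta}|\sigma|^2dr)^{p/2}]\le C\delta^{p/2}$. If only linear growth were assumed, one would instead bound everything by $C\delta^{p/2}(1+\hat{\mathbb{E}}_t\sup|X|^p)$ and control $\sup|X|^p$ by $C(1+|\xi|^p)$ via Gronwall, exactly as in Step 1. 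The second estimate then follows from $|X_{t+\delta}|^p\le 2^{p-1}(|\xi|^p+|X_{t+\delta}-\xi|^p)$.

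\emph{Main obstacle.} The only delicate point is justifying Gronwall under the nonlinear conditional expectation. One has to check that $u\mapsto\phi(u)$ is measurable and bounded, which follows from $X\in S^p_G$, and that the inequality holds q.s. simultaneously for all $u$; monotonicity of $\phi$ handles the latter. One also needs the conditional BDG with $\xi\in L^p_G(\Omega_t)$ merely integrable rather than bounded. The first attempt would be a localization/approximation of $\xi$ by $L_{ip}(\Omega_t)$ variables, applying the estimates there and passing to the limit using Step 1 itself, which is continuous in $\xi$.
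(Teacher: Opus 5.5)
Your proposal is correct and follows the standard argument. The paper gives no proof of its own and only refers to Exercise 5.4.8 in \cite{P19}, whose intended solution is exactly what you do: the conditional BDG inequality (Proposition \ref{BDG}), H\"older's inequality, and a Gronwall argument under $\hat{\mathbb{E}}_t$, with the third estimate obtained from the bounded (or linearly growing) coefficients. Your care in justifying Gronwall for the random function $u\mapsto\hat{\mathbb{E}}_t[\sup_{s\in[t,u]}|\Delta_s|^p]$ (via monotonicity in $u$, or by iterating the inequality) is appropriate, though the approximation of $\xi$ is unnecessary, since BDG only needs the integrand to lie in $H^p_G$.
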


For any fixed $(t,x)\in[0,T]\times\mathbb{R}$, consider the doubly reflected quadratic $G$-BSDE with parameters $(\xi^{t,x},f^{t,x},g^{t,x},L^{t,x}, U^{t,x})$, where
\begin{align*}
	&\xi^{t,x}=\phi(X_T^{t,x}),\  L_s^{t,x}=h(s,X_s^{t,x}), \ U_s^{t,x}=h'(s,X_s^{t,x}), \\
   & f^{t,x}(s,y,z)=f(s,X_s^{t,x},y,z), \ g^{t,x}(s,y,z)=g(s,X_s^{t,x},y,z). 
	\end{align*}
Moreover, consider the following penalized reflected $G$-BSDEs:
\begin{align}\label{Yntx}
	\begin{cases}
		Y_s^{n,t,x}= \phi(X_T^{t,x})+\int_s^T f(r,X_r^{t,x},Y_r^{n,t,x},Z_r^{n,t,x})dr+\int_s^T g(r,X_r^{t,x},Y_r^{n,t,x},Z_r^{n,t,x})d\langle B\rangle_r\\
		\ \ \ \ \ \ \ \ \ \ \ -n\int_s^T (Y_r^{n,t,x}-h'(r,X_r^{t,x}))^+dr-\int_s^T Z_r^{n,t,x}dB_r+(A_T^{n,t,x}-A_s^{n,t,x}), \quad s\in[t,T],\\
		Y_s^{n,t,x,}\geq h(s,X^{t,x}_s), s\in[t,T], \\
		\{\int_t^s (h(r,X^{t,x}_r-Y^{n,t,x}_r))dA^{n,t,x}_r\}_{s\in[t,T]} \textrm{ is a non-increasing $G$-martingale}.
\end{cases}\end{align}

    It is worth pointing out that, under Assumptions (Ai)-(Av), it is not clear if (H2) holds for $f^{t,x},g^{t,x}$ and Assumption \ref{assLU} (2) holds for $L^{t,x}$. That is, we cannot use Theorem \ref{main1} and Theorem \ref{wellposedness for RGBSDE} directly to  obtain the result that both the doubly reflected $G$-BSDE with parameters $(\xi^{t,x},f^{t,x},g^{t,x},L^{t,x}, U^{t,x})$ and the reflected $G$-BSDE \eqref{Yntx} admit a unique solution. However, for the single reflected equation \eqref{Yntx}, by Remark 6.5 in \cite{CT},  under Assumptions (Ai)-(Av), it indeed has a unique solution, denote by $(Y^{n,t,x},Z^{n,t,x},A^{n,t,x})$. We define
\begin{align}\label{untx}
	u_n(t,x):=Y_t^{n,t,x}, \quad (t,x)\in[0,T]\times \mathbb{R}.
\end{align}
    
    Actually, (H2) and Assumption \ref{assLU} (2) are only used to ensure the existence of solutions to penalized quadratic $G$-BSDEs. Fortunately, under Assumptions (Ai)-(Av), by Theorem 6.2 in \cite{CT},  the following penalized $G$-BSDE parameterized by $m$ admits a unique solution
    \begin{align*}
        Y_s^{m,t,x}= &\phi(X_T^{t,x})+\int_s^T f(r,X_r^{t,x},Y_r^{m,t,x},Z_r^{m,t,x})dr+\int_s^T g(r,X_r^{t,x},Y_r^{m,t,x},Z_r^{m,t,x})d\langle B\rangle_r\\
		& +m\int_s^T (Y_r^{m,t,x}-h(r,X^{t,x}_r))^-dr-m\int_s^T (Y_r^{m,t,x}-h'(r,X_r^{t,x}))^+dr\\
        &-\int_s^T Z_r^{m,t,x}dB_r-(K_T^{m,t,x}-K_s^{m,t,x}).
    \end{align*}
    Therefore, all the analysis and results in Section 3 still hold under (Ai)-(Av). That is, the doubly reflected $G$-BSDE with parameters $(\xi^{t,x},f^{t,x},g^{t,x},L^{t,x}, U^{t,x})$ has a unique solution, denoted by $(Y^{t,x},Z^{t,x},A^{t,x})$. Besides, for any $s\in[t,T]$, we have 
    \begin{align*}
        Y^{n,t,x}_s\downarrow Y^{t,x}_s, \textrm{ as }n\rightarrow\infty.
    \end{align*}
    We now define
\begin{equation}\label{utx}
	u(t,x):=Y_t^{t,x},\quad (t,x)\in[0,T]\times\mathbb{R}.
\end{equation}
Similar as Remark 4.3 in \cite{HJPS2}, $u$ and $u_n$ are deterministic function. Moreover, for each fixed $(t,x)$, we have $u_n(t,x)\downarrow u(t,x)$ and by the result in \cite{CT}, $u_n$ is a continuous function.

Our main result in this section is to show that $u$ defined by \eqref{utx} is the viscosity solution to the fully nonlinear obstacle problem \eqref{PDE}. We first give the definition of viscosity solution for \eqref{PDE}, which is based on the notions of sub-jets and super-jets. For more details, we may refer to the paper \cite{CIL}. 

\begin{definition}
	Let $u\in C((0,T)\times\mathbb{R})$ and $(t,x)\in(0,T)\times \mathbb{R}$. We denote by $\mathcal{P}^{2,+} u(t,x)$ (the ``parabolic superjet" of $u$ at $(t,x)$) the set of triples $(p,q,X)\in\mathbb{R}^3$ satisfying
	\begin{align*}
		u(s,y)\leq u(t,x)+p(s-t)+ q(y-x)+\frac{1}{2} X(y-x)^2+o(|s-t|+|y-x|^2).
	\end{align*}
	Similarly, we define $\mathcal{P}^{2,-} u(t,x)$ (the ``parabolic subjet" of $u$ at $(t,x)$) by $\mathcal{P}^{2,-} u(t,x):=-\mathcal{P}^{2,+}(- u)(t,x)$.
\end{definition}


\begin{definition}
	Let $u$ be a continuous function defined on $[0,T]\times \mathbb{R}$. It is called a viscosity:\\
	\noindent (i) subsolution  of \eqref{PDE} if $u(T,x)\leq \phi(x)$, $x\in\mathbb{R}$, and at any point $(t,x)\in(0,T)\times\mathbb{R}$, for any $(p,q,X)\in\mathcal{P}^{2,+}u(t,x)$,
	\begin{equation}\label{viscosity sub}
		\max\bigg(u(t,x)-h'(t,x), \min\big(u(t,x)-h(t,x), -p-F(X,q,u(t,x),x,t)\big)\bigg)\leq 0;
	\end{equation}
	(ii) supersolution of \eqref{PDE} if $u(T,x)\geq\phi(x)$, $x\in\mathbb{R}$, and at any point $(t,x)\in(0,T)\times\mathbb{R}$, for any $(p,q,X)\in\mathcal{P}^{2,-}u(t,x)$,
	\begin{equation}\label{viscosity super}
		\max\bigg(u(t,x)-h'(t,x),\min\big(u(t,x)-h(t,x), -p-F(X,q,u(t,x),x,t)\big)\bigg)\geq 0;
	\end{equation}
	(iii) solution of \eqref{PDE} if it is both a viscosity subsolution and supersolution.
\end{definition}

By the above definition, a viscosity solution should be continuous. In the sequel, we first show that $u$ defined by \eqref{utx} is indeed a continuous function. For simplicity, we only consider the case where $f=0$. The results still hold for the other cases. 


\begin{lemma}\label{l1}
	We have $u\in C([0,T]\times \mathbb{R}). $
\end{lemma}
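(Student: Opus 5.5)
The plan is to prove continuity in $x$ uniformly in $t$, then continuity in $t$, using the a priori estimate of Proposition \ref{est-Y1-Y2} together with the uniform bounds from Section 3 and the SDE estimates in Proposition \ref{the1.17}.

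\textbf{Uniform bounds for the solutions.} For $(t,x)$ fixed, all constants in Lemma \ref{est-Ynm}, Lemma \ref{Ynm-U}, Lemma \ref{est-Znm-Anm} and Lemma \ref{est-barYn-barZn} depend only on $M_0,N_0,L,\kappa,T$ and on the sup-norms of the data describing $I$ and $U+A^{I,+}$. I take $I_s=U_s=h'(s,X^{t,x}_s)$. Then, by $G$-It\^o's formula and (Av), $b,\sigma$ in \eqref{A3} are bounded uniformly in $(t,x)$, $K^u$ comes from the $G(\cdot)$ part of the $d\langle B\rangle$ term, and $A^{I,\pm}$ are generated by bounded $d\langle B\rangle$ and $ds$ densities. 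Consequently $\|Y^{t,x}\|_{S^\infty_G}+\|Z^{t,x}\|_{BMO_G}\leq C$ with $C$ independent of $(t,x)$. In particular $u$ is bounded. The approximating sequences $A^{n,\pm,t,x}$ (here $A^{n,n,\pm}$ from Step 2) also satisfy $\hE[|A^{n,\pm,t,x}_T|^{\alpha}]\le C_\alpha$ uniformly in $(t,x)$.

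\textbf{Continuity in $x$.} Extend the processes to $[0,T]$ in the usual way: for $s<t$, set $X^{t,x}_s=x$ and use the equation with zero generator on $[0,t]$. Alternatively, compare only on $[t,T]$ using conditional expectations $\hE_t$. Apply Proposition \ref{est-Y1-Y2} to the two parameter sets corresponding to $x$ and $x'$. The smallness condition $\|L_z(1+|Z^1|+|Z^2|)\|_{BMO_G}<\phi(q)$ is not automatic. I plan to handle it by the standard device of Hu et al./Cao--Tang: since the BMO norms are bounded by a uniform $C$, choose $q$ close to $1$ so that $\phi(q)$ is large. Indeed $\phi(q)\to\infty$ as $q\downarrow 1$, because $\log\frac{2q-1}{2(q-1)}\to\infty$. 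This gives a fixed $p$. The estimate then yields
\[
|u(t,x)-u(t,x')|^2\leq C\Big(\hE[|\phi(X^{t,x}_T)-\phi(X^{t,x'}_T)|^{2p}]^{1/p}+\hE\Big[\Big(\int_t^T|\hat\lambda_s|^2d\langle B\rangle_s\Big)^p\Big]^{1/p}+\Psi\,\hE[\sup_s|\hat L_s|^{2p}+\sup_s|\hat U_s|^{2p}]^{1/(2p)}\Big).
\]
Here $\hat\lambda_s=g(s,X^{t,x}_s,Y^2_s,Z^2_s)-g(s,X^{t,x'}_s,Y^2_s,Z^2_s)$ is bounded by $\kappa(1+|X^{t,x}_s|^k+|X^{t,x'}_s|^k)|X^{t,x}_s-X^{t,x'}_s|$ by (Aii). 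Similarly $\hat L,\hat U$ are bounded by $L|X^{t,x}-X^{t,x'}|$. By H\"older's inequality and Proposition \ref{the1.17}, the right-hand side is bounded by $C(1+|x|^{k'}+|x'|^{k'})|x-x'|^{1/2}$, with $\Psi$ uniformly bounded as shown above.

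\textbf{Continuity in $t$.} For $t<t'$, I write $u(t,x)-u(t',x)=(Y^{t,x}_t-Y^{t,x}_{t'})+(Y^{t,x}_{t'}-u(t',x))$. The first term is controlled by the equation on $[t,t']$: $\hE[|\int_t^{t'}g\,d\langle B\rangle|]$ is small by the BMO bound and Lemma \ref{lem2.2}, $\int_t^{t'} Z\,dB$ is small by Proposition \ref{BDG}, and the increment of $A^{t,x}$ must also be small. The increment of $A$ on a short interval is the delicate point. I plan to avoid it by instead taking $\hE$ of $|Y^{t,x}_{t'}-Y^{t,x}_t|$ via the uniform $S^\alpha_G$ continuity of $Y^{t,x}$. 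This continuity is inherited from the penalized approximations, or can be obtained from the time-consistency $Y^{t,x}_{t'}=u(t',X^{t,x}_{t'})$ (the Markov property, as in \cite{HJPS2}). With the Markov identity, $\hE[Y^{t,x}_{t'}]$ differs from $u(t',x)$ by at most $C\hE[(1+|X|^{k'})|X^{t,x}_{t'}-x|^{1/2}]\le C(1+|x|^{k'})|t'-t|^{1/4}$, using the $x$-estimate and Proposition \ref{the1.17}. The remaining task is $|Y^{t,x}_t-\hE[Y^{t,x}_{t'}]|\to0$. I expect this to be the main obstacle, since $A$ is not monotone. I would bound it using the approximating processes: $A^{n}=A^{n,-}-K^n-A^{n,+}$. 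The piece $A^{n,-}$ has density $n(Y^n-U)^+\le C$ by Lemma \ref{Ynm-U}, so its increment is $O(t'-t)$. The terms $A^{n,+}$ and $-K^n$ are one-signed. Using $Y\ge L$, the continuity of $h$, and a comparison with the lower-obstacle case (Cao--Tang's time-continuity) then gives an upper and lower bound that vanish as $t'\downarrow t$. Combining the bounds in $x$ and $t$ yields joint continuity.
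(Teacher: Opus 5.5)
\textbf{Continuity in $x$.} Your argument here matches the paper's: Proposition \ref{est-Y1-Y2}, with the right-hand side controlled by (Aii) and Proposition \ref{the1.17}. Your remark that the smallness hypothesis can be met by taking $q$ close to $1$, using BMO bounds uniform in $(t,x)$, fills in a detail the paper leaves implicit.

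\textbf{Continuity in $t$: the gap.} With your decomposition, right-continuity at a fixed $t$ only involves the single process $Y^{t,x}$, granting the flow identity. Left-continuity is different: you must control $|Y^{t,x}_t-\hE[Y^{t,x}_{t'}]|$ uniformly in the starting time $t$, and none of your suggestions delivers this.
\begin{itemize}
\item Obtaining the modulus of continuity of $Y^{t,x}$ from $Y^{t,x}_{t'}=u(t',X^{t,x}_{t'})$ is circular, since it presupposes the continuity of $u$ in $t$. That identity is also not established anywhere for doubly reflected quadratic $G$-BSDEs.
\item ``Inheriting'' uniform continuity from the penalized approximations requires short-interval control of the non-negative increments of $A^{n,+}$ and $-K^n$, uniformly in $n$ and $t$. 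Section 3 only gives $\hE[|A^{n,+}_T|^\alpha]+\hE[|K^n_T|^\alpha]\le C_\alpha$ and a BMO bound. These give boundedness, not smallness on $[t,t']$.
\item The same objection applies to your claim that $\hE[|\int_t^{t'}g\,d\langle B\rangle_s|]$ is small ``by the BMO bound and Lemma \ref{lem2.2}''.
\item Lemma \ref{Ynm-U} yields only the one-sided bound $A_{t'}-A_t\ge -C(t'-t)$. The opposite direction is exactly the step you defer to an unspecified comparison with Cao--Tang's lower-obstacle result. That result concerns the value function, not increments of the reflecting process.
\end{itemize}

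\textbf{The missing idea.} The paper uses the freezing device you mention in passing for $x$ but do not exploit for $t$. For $s\le t$ set $X^{t,x}_s=x$, $Y^{t,x}_s=Y^{t,x}_t$, $Z^{t,x}_s=0$, $A^{t,x}_s=0$, $L^{t,x}_s=h(t,x)$, $U^{t,x}_s=h'(t,x)$. Then $(Y^{t,x},Z^{t,x},A^{t,x})$ solves on $[0,T]$ the doubly reflected $G$-BSDE with data $(\phi(X^{t,x}_T),\mathbf{1}_{[t,T]}(s)g(s,X^{t,x}_s,y,z),L^{t,x},U^{t,x})$, and $u(t,x)=Y^{t,x}_0$.

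Applying Proposition \ref{est-Y1-Y2} at time $0$ for $t_1<t_2$ bounds $|u(t_1,x)-u(t_2,x)|^2$ by three kinds of terms:
\begin{itemize}
\item the terminal difference $\phi(X^{t_1,x}_T)-\phi(X^{t_2,x}_T)$;
\item a $\hat\lambda$-term, equal to $g(s,X^{t_1,x}_s,Y^{t_2,x}_{t_2},0)$ on $[t_1,t_2)$ and controlled by (Aii) on $[t_2,T]$;
\item $\sup_s|L^{t_1,x}_s-L^{t_2,x}_s|$ and $\sup_s|U^{t_1,x}_s-U^{t_2,x}_s|$.
\end{itemize}
All of these vanish as $t_2-t_1\to 0$ by (Ai), (Aii), the flow property $X^{t_1,x}_s=X^{t_2,X^{t_1,x}_{t_2}}_s$ and Proposition \ref{the1.17}. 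The reflection is absorbed by the (AMC) approximating sequences inside Proposition \ref{est-Y1-Y2}, whose $\Psi$-factor is bounded by the uniform estimates. So neither short-interval control of $A$ nor a Markov property is needed.
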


\begin{proof}
	First, for any fixed $t\in[0,T]$, we show that  $u$ is a continuous function in $x$. By Proposition \ref{est-Y1-Y2}, Proposition \ref{the1.17} and the proof of Theorem \ref{main1}, noting that $u$ is deterministic, for any $t\in[0,T]$ and any $x_1,x_2\in\mathbb{R}$, there exists a constant $C$ depending on $T,k,L,G,x_1,x_2,M_0,N_0$, such that
	\begin{align*}
        &|u(t,x_1)-u(t,x_2)|^2\\
        \leq &C\left(\hE\left[|\phi(X^{t,x_1}_T)-\phi(X^{t,x_2}_T)|^{2p}\right]\right)^{\frac{1}{p}}+C\left(\hE\left[\left(\int_t^T|\hat{\lambda}|^2 d\langle B \rangle_s\right)^p\right]\right)^{\frac{1}{p}}\\
       &+C\left(\hat{\mathbb{E}}\left[\sup_{s\in[t,T]}|L^{t,x_1}_s-L^{t,x_2}_s|^{2p}\right]\right)^{\frac{1}{2p}}+C\left(\hat{\mathbb{E}}\left[\sup_{s\in[t,T]}|U^{t,x_1}_s-U^{t,x_2}_s|^{2p}\right]\right)^{\frac{1}{2p}}\\
        \leq &C\left(\hE\left[\left(1+|X^{t,x_1}_T|^{2pk}+|X^{t,x_2}_T|^{2pk}\right)^2\right]\right)^{\frac{1}{2p}}\left(\hE\left[|X^{t,x_1}_T-X^{t,x_2}_T|^{4p}\right]\right)^{\frac{1}{2p}}\\
        &+C\left(\hE\left[\int_t^T\left(1+|X^{t,x_1}_s|^{2pk}+|X^{t,x_2}_s|^{2pk}\right)^2ds\right]\right)^{\frac{1}{2p}}\left(\hE\left[\int_t^T|X^{t,x_1}_s-X^{t,x_2}_s|^{4p}ds\right]\right)^{\frac{1}{2p}}\\
        &+C\left(\hE\left[\sup_{s\in[t,T]}|X^{t,x_1}_s-X^{t,x_2}_s|^{2p}\right]\right)^{\frac{1}{2p}}\\
        \leq & C(1+|x_1|^{2k}+|x_2|^{2k})|x_1-x_2|^2+C|x_1-x_2|,
    \end{align*}
    where $\hat{\lambda}_s=g(s,X^{t,x_1}_s,Y^{t,x_2}_s,Z^{t,x_2}_s)-g(s,X^{t,x_2}_s,Y^{t,x_2}_s,Z^{t,x_2}_s)$. 	

    Now, we are in a position to show that  any fixed $x\in\mathbb{R}$, $u$ is continuous in $t$. For any fixed $t\in[0,T]$, we define $X_s^{t,x}:=x$, $Y_s^{t,x}:=Y_t^{t,x}$, $Z_s^{t,x}:= 0$, $A_s^{t,x}:=0$, $U^{t,x}_s:=h'(t,x)$ and $L^{t,x}_s:=h(t,x)$ for $0\leq s\leq t$. Obviously, $(Y^{t,x}_s,Z^{t,x}_s,A^{t,x}_s)_{s\in[0,T]}$ is the solution to the doubly reflected $G$-BSDE with parameters $(\phi(X^{t,x}_T), \tilde{g}^{t,x}, L^{t,x}, U^{t,x})$, where $\tilde{g}^{t,x}(s,y,z)=\mathbf{1}_{[t,T]}(s)g(s,X^{t,x}_s,y,z)$. For each fixed $x\in\mathbb{R}$, suppose that $0\leq t_1\leq t_2\leq T$, by Proposition \ref{est-Y1-Y2}, for some $p\geq 2$, there exists a constant $C$ depending on $T,k,L,G,x,p$, such that
	\begin{align*}
		&|u(t_1,x)-u(t_2,x)|^2=|Y^{t_1,x}_0-Y^{t_2,x}_0|^2\\
		\leq &C\left(\mathbb{E}\left[\sup_{t\in[0,T]}|L^{t_1,x}_t-L^{t_2,x}_t|^{2p}\right]+\mathbb{E}\left[\sup_{t\in[0,T]}|U^{t_1,x}_t-U^{t_2,x}_t|^{2p}\right]\right)^{\frac{1}{2p}}+C\left(\mathbb{E}\left[|\phi(X_T^{t_1,x})-\phi(X_T^{t_2,x})|^{2p}\right]\right)^{\frac{1}{p}}\\
		&+C\left(\mathbb{E}\left[\left(\int_0^T |\tilde{g}^{t_1,x}(s,X^{t_1,x}_s,Y^{t_2,x}_s,Z^{t_2,x}_s)-\tilde{g}^{t_2,x}(s,X^{t_2,x}_s,Y^{t_2,x}_s,Z^{t_2,x}_s)|^2d\langle B\rangle_s\right)^p\right]\right)^{\frac{1}{p}}.
	\end{align*}
	Note that 
	\begin{align*}
		&\sup_{t\in[0,T]}|L^{t_1,x}_t-L^{t_2,x}_t|\\
		\leq&|h(t_1,x)-h(t_2,x)|+\sup_{t\in[t_1,t_2]}|h(t,X_t^{t_1,x})-h(t_2,x)|+\sup_{t\in[t_2,T]}|h(t,X^{t_1,x}_t)-h(t,X^{t_2,x}_t)|\\
		\leq &2\sup_{t\in[t_1,t_2]}|h(t,x)-h(t_2,x)|+\sup_{t\in[t_1,t_2]}|h(t,X^{t_1,x}_t)-h(t,x)|+\sup_{t\in[t_2,T]}L|X^{t_1,x}_t-X^{t_2,x}_t|\\
		\leq & 2\sup_{t\in[t_1,t_2]}|h(t,x)-h(t_2,x)|+\sup_{t\in[t_1,t_2]}L|X^{t_1,x}_t-x|+\sup_{t\in[t_2,T]}L|X^{t_2,X^{t_1,x}_{t_2}}_t-X^{t_2,x}_t|
	\end{align*}
    and 
    \begin{align*}
        |\phi(X_T^{t_1,x})-\phi(X_T^{t_2,x})|&\leq C(1+|X^{t_1,x}_T|^k+|X^{t_2,X}_T|^k)|X^{t_1,x}_T-X^{t_2,x}_T|\\
        &= C(1+|X^{t_1,x}_T|^k+|X^{t_2,X}_T|^k)|X^{t_2,X^{t_1,x}_{t_2}}_T-X^{t_2,x}_T|.
    \end{align*}
	Letting $\delta=t_2-t_1$, by Proposition \ref{the1.17}, we have
	\begin{align*}
		&\lim_{\delta\rightarrow 0}\mathbb{E}\left[\sup_{t\in[0,T]}|L^{t_1,x}_t-L^{t_2,x}_t|^{2p}\right]=0,\\
        &\lim_{\delta\rightarrow 0}\mathbb{E}\left[|\phi(X^{t_1,x}_T)-\phi(X^{t_2,x}_T)|^{2p}\right]=0.
	\end{align*}
	Similarly, we have
	\begin{align*}
		&\lim_{\delta\rightarrow 0}\mathbb{E}\left[\sup_{t\in[0,T]}|U^{t_1,x}_t-U^{t_2,x}_t|^{2p}\right]=0.
	\end{align*}
	By simple calculation, we obtain that
	\begin{align*}
		&\int_0^T |\tilde{g}^{t_1,x}(s,X^{t_1,x}_s,Y^{t_2,x}_s,Z^{t_2,x}_s)-\tilde{g}^{t_2,x}(s,X^{t_2,x}_s,Y^{t_2,x}_s,Z^{t_2,x}_s)|^2d\langle  B\rangle_s\\
		\leq &C \int_{t_1}^{t_2}\bigg(|g(s,0,0,0)|^2+|X_s^{t_1,x}|^{2k+2}+|X_s^{t_1,x}|^2+|Y_s^{t_2,x}|^2\bigg)ds\\
		&+C\int_{t_2}^T \bigg(1+|X_s^{t_1,x}|^k+|X_s^{t_2,x}|^k\bigg)^2|X_s^{t_1,x}-X^{t_2,x}_s|^2ds.
	\end{align*}
	Applying Proposition \ref{the1.17} and the fact that $Y^{t_2,x}\in S^\infty_G(0,T)$, we have
	\begin{displaymath}
		\lim_{\delta\rightarrow 0}\mathbb{E}\left[\left(\int_0^T |\tilde{g}^{t_1,x}(s,X^{t_1,x}_s,Y^{t_2,x}_s,Z^{t_2,x}_s)-\tilde{g}^{t_2,x}(s,X^{t_2,x}_s,Y^{t_2,x}_s,Z^{t_2,x}_s)|^2d\langle B\rangle_s\right)^p\right]=0.
	\end{displaymath}
	All the above analysis implies that $u$ is continuous in $t$. The proof is complete.
\end{proof}

Now, we present the main result in this section. 

\begin{theorem}\label{the1.21}
	The function $u$ defined by \eqref{utx} is a viscosity solution of the double obstacle problem \eqref{PDE}.
\end{theorem}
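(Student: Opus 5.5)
The proof runs through the penalized functions $u_n$ from \eqref{untx} and a stability argument for monotone limits. By Remark 6.5 and the results of \cite{CT} on single-obstacle quadratic reflected $G$-BSDEs, each $u_n$ is a continuous viscosity solution of the lower-obstacle problem
\begin{equation*}
\min\Big(-\partial_t u_n-F(D^2_xu_n,D_xu_n,u_n,x,t)+n(u_n-h')^+,\;u_n-h\Big)=0,\qquad u_n(T,x)=\phi(x).
\end{equation*}
Step 3 of the proof of Theorem \ref{main1}, applied in the Markovian setting as explained before \eqref{utx}, gives $u_n(t,x)\downarrow u(t,x)$ pointwise. By Lemma \ref{l1} the limit $u$ is continuous, and each $u_n$ is continuous. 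Dini's theorem then upgrades the convergence to local uniform convergence on $[0,T]\times\mathbb{R}$. The terminal condition $u(T,x)=\phi(x)$ holds trivially.

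\emph{Obstacle constraints.} From (S1) we get $h(t,x)\le u(t,x)\le h'(t,x)$. Equivalently, the bound $n(u_n-h')^+\le C$ from Lemma \ref{est-barYn-barZn} (with a constant locally uniform in $x$) gives $(u_n-h')^+\le C/n\to 0$.

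\emph{Supersolution property.} Fix $(t,x)$ and $(p,q,X)\in\mathcal{P}^{2,-}u(t,x)$. If $u(t,x)=h'(t,x)$ the inequality \eqref{viscosity super} is trivial, so assume $u(t,x)<h'(t,x)$. By the standard stability lemma for jets under local uniform convergence (\cite{CIL}), there are $(t_n,x_n)\to(t,x)$ and $(p_n,q_n,X_n)\in\mathcal{P}^{2,-}u_n(t_n,x_n)$ with $(p_n,q_n,X_n,u_n(t_n,x_n))\to(p,q,X,u(t,x))$. For large $n$ we have $u_n(t_n,x_n)<h'(t_n,x_n)$, so the penalty term vanishes. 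The supersolution inequality for $u_n$ then reads $\min(-p_n-F(X_n,q_n,u_n(t_n,x_n),x_n,t_n),\,u_n(t_n,x_n)-h(t_n,x_n))\ge0$. Passing to the limit using the continuity of $F$ yields $\min(-p-F,\,u-h)\ge0$ at $(t,x)$, which is \eqref{viscosity super}.

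\emph{Subsolution property.} Take $(p,q,X)\in\mathcal{P}^{2,+}u(t,x)$. We already know $u\le h'$, so it suffices to show $\min(u-h,\,-p-F)\le0$. If $u(t,x)=h(t,x)$ this is immediate, so assume $u(t,x)>h(t,x)$. Approximate by $(p_n,q_n,X_n)\in\mathcal{P}^{2,+}u_n(t_n,x_n)$ as before. Since $u_n\ge u>h$ nearby, the subsolution inequality for $u_n$ gives
\begin{equation*}
-p_n-F(X_n,q_n,u_n(t_n,x_n),x_n,t_n)+n(u_n(t_n,x_n)-h'(t_n,x_n))^+\le0 .
\end{equation*}
The penalty term is nonnegative, so dropping it gives $-p_n-F(\cdots)\le0$. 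Letting $n\to\infty$ yields $-p-F(X,q,u,x,t)\le0$.

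\emph{Main difficulty.} The hardest step is the monotone-to-uniform convergence $u_n\to u$, which is what allows the jet stability argument. This needs three inputs. First, continuity of $u$ (Lemma \ref{l1}). Second, the Markovian identification $Y^{t,x}_t=u(t,x)$ as the limit of $Y^{n,t,x}_t$, which comes from Step 3 of Theorem \ref{main1}; the convergence in $S^\alpha_G$ at the deterministic time $t$ gives pointwise convergence. Third, continuity of each $u_n$. A secondary point is to check that the viscosity characterization of $u_n$ in \cite{CT} (their Section 6) covers the generator $f-n(y-h')^+$: this term is Lipschitz in $y$ for fixed $n$, so their result should apply directly.
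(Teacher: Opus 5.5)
Your proposal is correct and follows essentially the same route as the paper. You use the viscosity property of the penalized single-obstacle functions $u_n$ from \cite{CT}, upgrade the monotone convergence $u_n\downarrow u$ to local uniform convergence via Lemma \ref{l1} and Dini's theorem, and then apply the jet-stability lemma of \cite{CIL} with the same case splits: $u=h$ versus $u>h$ for the subsolution, and $u=h'$ versus $u<h'$ for the supersolution.

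One point you make explicit that the paper leaves implicit: $u_n(t_n,x_n)\ge u(t_n,x_n)>h(t_n,x_n)$ for large $n$. This is exactly what lets you extract $-p_n-F_n\le 0$ from the minimum in the subsolution inequality for $u_n$.
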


\begin{proof}
    Recalling $u_n$ defined by \eqref{untx}, by Theorem 6.4 in \cite{CT}, it is the viscosity solution of the following  PDE
\begin{equation}\label{eq1.9}
	\begin{cases}
		\min\bigg(u_n(t,x)-h(t,x),-\partial_t u_n-F_n(D_x^2 u_n,D_x u_n,u_n,x,t)\bigg)=0, & (t,x)\in[0,T]\times\mathbb{R}\\
		u_n(T,x)=\phi(x), &  x\in\mathbb{R},
\end{cases}\end{equation}
where
\begin{displaymath}
	F_n(D_x^2 u,D_x u,u,x,t)=F(D_x^2 u,D_x u,u,x,t)-n(u(t,x)-h'(t,x))^+.
\end{displaymath}

	Note the facts that $u_n$ is continuous (see Lemma 6.2 and Lemma 6.3 in \cite{CT}) and for each $(t,x)\in[0,T]\times\mathbb{R}$, we have
	\begin{displaymath}
		u_n(t,x)\downarrow u(t,x), \textrm{ as }n\rightarrow \infty.
	\end{displaymath}
	 By Lemma \ref{l1} and Dini's theorem, the sequence $u^n$  converges uniformly to $u$ on compact sets. The proof will be divided into two parts.
	
	\textbf{Step 1: subsolution}. For each fixed $(t,x)\in(0,T)\times\mathbb{R}$, let $(p,q,X)\in\mathcal{P}^{2,+} u(t,x)$. For the case that $u(t,x)=h(t,x)$,  since we have $u(t,x)\leq h'(t,x)$, \eqref{viscosity sub} clearly holds. 
	Now suppose that $u(t,x)>h(t,x)$. By Lemma 6.1 in \cite{CIL}, there exist sequences
	\begin{displaymath}
		n_j\rightarrow \infty,\ (t_j,x_j)\rightarrow (t,x),\ (p_j,q_j,X_j)\in \mathcal{P}^{2,+} u_{n_j}(t_j,x_j),
	\end{displaymath}
	such that $(p_j,q_j,X_j)\rightarrow(p,q,X)$. Recalling that $u_n$ is the viscosity solution to PDE \eqref{eq1.9}, hence a subsolution, we have, for any $j$,
	\begin{displaymath}
		-p_j-F(X_j,q_j,u_{n_j}(t_j,x_j),x_j,t_j)\leq -n_{j}(u_{n_j}(t_j,x_j)-h'(t_j,x_j))^+\leq 0.
	\end{displaymath}
	Letting $j$ go to infinity in the above inequality yields that 
	\begin{displaymath}
		-p-F(X,q,u(t,x),x,t)\leq 0.
	\end{displaymath}
    All the above analysis indicates that \eqref{viscosity sub} holds. 	Therefore,  $u$ is a subsolution of \eqref{PDE}.
	
	\textbf{Step 2: supersolution.} For each fixed $(t,x)\in(0,T)\times\mathbb{R}$, let $(p,q,X)\in\mathcal{P}^{2,-} u(t,x)$. For the case that $u(t,x)=h'(t,x)$, \eqref{viscosity super} clearly holds. It remains to show that when $u(t,x)<h'(t,x)$, \eqref{viscosity super} holds, i.e.,
	\begin{displaymath}
		-p-F(X,q,u(t,x),x,t)\geq 0.
	\end{displaymath}
	Applying Lemma 6.1 in \cite{CIL} again, there exist sequences
	\begin{displaymath}
		n_j\rightarrow \infty,\ (t_j,x_j)\rightarrow (t,x),\ (p_j,q_j,X_j)\in \mathcal{P}^{2,-} u_{n_j}(t_j,x_j),
	\end{displaymath}
	such that $(p_j,q_j,X_j)\rightarrow(p,q,X)$. Since $u_n$ is the viscosity solution to equation \eqref{eq1.9}, hence a supersolution, we derive that for any $j$,
	\begin{displaymath}
		-p_j-F_{n_j}(X_j,q_j,u_{n_j}(t_j,x_j),x_j,t_j)\geq 0.
	\end{displaymath}
	Recalling that $u_n$ converges uniformly on compact sets and $u(t,x)<h'(t,x)$, for $j$ sufficiently large, we have $u_{n_j}(t_j,x_j)<h'(t_j,x_j)$. Then, we obtain the desired result by letting $j$ approach infinity in the above inequality. Thus, $u$ is a viscosity solution of \eqref{PDE}.	 
\end{proof}

\section*{Acknowledgments}
	Li's research was supported  by the National Natural Science Foundation of China (No. 12301178), the Natural Science Foundation of Shandong Province for Excellent Young Scientists Fund Program (Overseas) (No. 2023HWYQ-049), the Fundamental Research Funds for the Central Universities  and  the Qilu Young Scholars Program of Shandong University. Luo's research was supported by the National Natural Science Foundation of China ( No. 12326603, No. W2511002).


\end{document}